\newcommand{\arxiv}[1]{\href{http://arxiv.org/abs/#1}{\texttt{arXiv:#1}}}
\def\paren#1{\left( #1 \right)}
\def\acc#1{\left\{ #1 \right\}}
\renewcommand{\le}{\leqslant}
\renewcommand{\ge}{\geqslant}
\newcommand{\LL}{\mathcal L}
\theoremstyle{plain}
\newtheorem{theorem}{Theorem}
\newtheorem{lemma}[theorem]{Lemma}
\newtheorem{corollary}[theorem]{Corollary}
\newtheorem{proposition}[theorem]{Proposition}
\newtheorem{observation}[theorem]{Observation}
\theoremstyle{definition}
\newtheorem{example}[theorem]{Example}
\theoremstyle{remark}
\newtheorem{remark}[theorem]{Remark}
\title{Critical exponent of binary words with few distinct palindromes}
\author{L\!'ubom{\'i}ra Dvo\v{r}\'akov\'a\footnote{FNSPE Czech Technical University, Prague, Czech Republic.\\
e-mail \href{mailto:lubomira.dvorakova@fjfi.cvut.cz}{\tt lubomira.dvorakova@fjfi.cvut.cz}.}
\and
Pascal Ochem\footnote{LIRMM, CNRS, Universit\'e de Montpellier, France.\\
e-mail \href{mailto:ochem@lirmm.fr}{\tt ochem@lirmm.fr}.}
\and
Daniela Opo\v{c}ensk\'a\footnote{FNSPE Czech Technical University, Prague, Czech Republic.\\
e-mail \href{mailto:opocedan@fjfi.cvut.cz}{\tt opocedan@fjfi.cvut.cz}.}}
\date{2023}
\begin{document}

\maketitle

\begin{abstract}
We study infinite binary words that contain few distinct palindromes.
In particular, we classify such words according to their critical exponents.
This extends results by Fici and Zamboni [TCS 2013].
Interestingly, the words with 18 and 20 palindromes happen to be morphic images of the fixed point
of the morphism $\texttt{0}\mapsto\texttt{01}$, $\texttt{1}\mapsto\texttt{21}$, $\texttt{2}\mapsto\texttt{0}$.
\end{abstract}

\section{Introduction}\label{sec:intro}
We consider the trade-off between the number of distinct palindromes and the critical exponent in infinite binary words.
For brevity, every mention of a number of palindromes will refer to a number of distinct palindromes, including the empty word.
Fici and Zamboni~\cite{FZ2013} show that the least number of palindromes in an infinite binary word
is 9 and this bound is reached by the word $(\texttt{001011})^\omega$.
At the other end of the spectrum, the famous Thue-Morse word $TM$, fixed point of the morphism
$\texttt{0}\to\texttt{01}$, $\texttt{1}\to\texttt{10}$, has the least critical exponent
and infinitely many palindromes.

Our results completely answer questions of the form
``Is there an infinite $\beta^+$-free binary word with at most $p$ palindromes?''
In each case, we also determine whether there are exponentially or polynomially many such words. 
The results are summarized in~\Cref{tab1}.
A green (resp. red) cell means that there are exponentially (resp. polynomially) many words.
We have labelled the cells that correspond to an item of Theorem~\ref{thm:exp_pairs} or~\ref{thm:fc}.
Fici and Zamboni~\cite{FZ2013} also show that an aperiodic binary word contains at least 11 palindromes and this bound
is reached by the morphic image of the Fibonacci word by $\texttt{0}\to\texttt{0}$, $\texttt{1}\to\texttt{01101}$.
This word contains in particular the factor\\
{\scriptsize $$(\texttt{00110100011010011010001101000110100110100011010011010001101000110100110100011010001101})^{\tfrac72}.$$}
Theorem 1.(a) improves this exponent to $\tfrac{10}3^+$.
Fleischer and Shallit~\cite{FleischerShallit} have considered the number of binary words
of length $n$ with at most 11 palindromes (sequence \href{https://oeis.org/A330127}{A330127} in the OEIS) and proved that
it is $\Theta\paren{\kappa^n}$, where $\kappa=1.1127756842787\ldots$ is the root of $X^7=X+1$.

\begin{table}[!htb]
\centerline{
\begin{tabular}{|l|l|l|l|l|l|l|l|l|l|l|}
\hline
$\infty$ & $TM$\cellcolor{red!50} & \cellcolor{green} & \cellcolor{green}\cellcolor{green} & \cellcolor{green} & \cellcolor{green} & \cellcolor{green} & \cellcolor{green} & \cellcolor{green} & \cellcolor{green} & \cellcolor{green} \\
\hline
25 &  & \cellcolor{green}3.h & \cellcolor{green} & \cellcolor{green} & \cellcolor{green} & \cellcolor{green} & \cellcolor{green} & \cellcolor{green} & \cellcolor{green} & \cellcolor{green} \\
\hline
24 &  &  &  \cellcolor{green} & \cellcolor{green} & \cellcolor{green} & \cellcolor{green} & \cellcolor{green} & \cellcolor{green} & \cellcolor{green} & \cellcolor{green}\\
\hline
23 &  &  & \cellcolor{green} & \cellcolor{green} & \cellcolor{green} & \cellcolor{green} & \cellcolor{green} & \cellcolor{green} & \cellcolor{green} & \cellcolor{green}\\
\hline
22 &  &  & \cellcolor{green} & \cellcolor{green} & \cellcolor{green} & \cellcolor{green} & \cellcolor{green} & \cellcolor{green} & \cellcolor{green} & \cellcolor{green}\\
\hline
21 &  &  & 3.g\cellcolor{green} & \cellcolor{green} & \cellcolor{green} & \cellcolor{green} & \cellcolor{green} & \cellcolor{green} & \cellcolor{green} & \cellcolor{green}\\
\hline
20 &  &  & \cellcolor{red!50}7.b & \cellcolor{green} & \cellcolor{green} & \cellcolor{green} & \cellcolor{green} & \cellcolor{green} & \cellcolor{green} & \cellcolor{green}\\
\hline
19 &  &  &  & 3.f\cellcolor{green} & \cellcolor{green} & \cellcolor{green} & \cellcolor{green} & \cellcolor{green} & \cellcolor{green} & \cellcolor{green}\\
\hline
18 &  &  &  & \cellcolor{red!50}7.a & 3.e \cellcolor{green} & \cellcolor{green} & \cellcolor{green} & \cellcolor{green} & \cellcolor{green} & \cellcolor{green}\\
\hline
17 &  &  &  &  &  & \cellcolor{green} & \cellcolor{green} & \cellcolor{green} & \cellcolor{green} & \cellcolor{green}\\
\hline
16 &  &  &  &  &  & \cellcolor{green} & \cellcolor{green} & \cellcolor{green} & \cellcolor{green} & \cellcolor{green}\\
\hline
15 &  &  &  &  &  & 3.d\cellcolor{green} & \cellcolor{green} & \cellcolor{green} & \cellcolor{green} & \cellcolor{green}\\
\hline
14 &  &  &  &  &  &  & \cellcolor{green} & \cellcolor{green} & \cellcolor{green} & \cellcolor{green} \\
\hline
13 &  &  &  &  &  &  & 3.c\cellcolor{green} & \cellcolor{green} & \cellcolor{green} & \cellcolor{green} \\
\hline
12 &  &  &  &  &  &  &  & 3.b \cellcolor{green}& \cellcolor{green} & \cellcolor{green} \\
\hline
11 &  &  &  &  &  &  &  &  & 3.a \cellcolor{green}& \cellcolor{green} \\
\hline
10 &  &  &  &  &  &  &  &  &  & \cellcolor{red!50} \\
\hline
9 &  &  &  &  &  &  &  &  &  &\cellcolor{red!50}$(\texttt{001011})^\omega$ \\
\hline
\slashbox{$p$}{$\beta^+$} & $2^+$ & $\tfrac73^+$ & $\tfrac52^+$ & $\tfrac{28}{11}^+$ & $\tfrac{13}5^+$ & $\tfrac83^+$ & $3^+$ & $\tfrac{23}7^+$ & $\tfrac{10}3^+$ & $\infty$ \\
\hline
\end{tabular} }
\caption{Infinite $\beta^+$-free binary words with at most $p$ palindromes. }\label{tab1}
\end{table}

\section{Preliminaries}\label{sec:pre}
An \emph{alphabet} $\mathcal A$ is a finite set and its elements are called \emph{letters}. 
A \emph{word} $u$ over $\mathcal A$ of \emph{length} $n$ is a finite string $u = u_0 u_1 \cdots u_{n-1}$, where $u_j\in\mathcal A$ for all $j \in \{0,1,\dots, n-1\}$.
If ${\mathcal A}=\{\tt 0, \tt 1, \dots, \tt d-1\}$, the length of $u$ is denoted $|u|$ and $|u|_{\tt i}$ denotes the number of occurrences of the letter ${\tt i}\in\mathcal A$ in the word $u$. The \emph{Parikh vector} $ \vec{u} \in \mathbb N^{d}$ is the vector defined as ${\vec u } = (|u|_{\tt 0}, |u|_{\tt 1}, \dots, |u|_{\tt d-1})^{ T}$.
The set of all finite words over $\mathcal A$ is denoted ${\mathcal A}^*$. The set ${\mathcal A}^*$ equipped with concatenation as the operation forms a monoid with the \emph{empty word} $\varepsilon$ as the neutral element.
We will also consider the set ${\mathcal A}^\omega$ of infinite words (that is, right-infinite words) and the set ${}^\omega{\mathcal A}^\omega$ of bi-infinite words.
A word $v$ is an $e$-\emph{power} of a word $u$ if $v$ is a~prefix of the infinite periodic word $uuu\cdots = u^\omega$ and $e=|v|/|u|$.
We write $v=u^e$. We also call $u^e$ a repetition with period $u$ and exponent $e$.
For instance, the Czech word $kapka$ (drop) can be written in this formalism as $(kap)^{5/3}$.  
A word is \emph{$\alpha^+$-free} (resp. \emph{$\alpha$-free}) if it 
contains no repetition with exponent $\beta$ such that $\beta>\alpha$ (resp. $\beta\ge\alpha$).

The \emph{critical exponent}  $E({\bf u} )$ of an infinite word ${\bf u}$ is defined as
$$E({\bf u}) =\sup\{e \in \mathbb{Q}: \  u ^e \  \text{is a factor of   } {\bf u}  \  \text{for a non-empty word} \  u\}\,.
$$
The \emph{asymptotic critical exponent} $E^*(\bf u)$ of an infinite word ${\bf u}$ is defined as $+\infty$ if $E({\bf u}) = +\infty$, and
$$E^*({\bf u}) =\limsup_{n\to \infty}\{e \in \mathbb{Q}: \  u ^e \  \text{is a factor of  } {\bf u}  \  \text{for some }  u \ \text{of length} \  n  \}\,,$$
 otherwise.
If each factor of ${\bf u}$ has infinitely many occurrences in ${\bf u}$, then ${\bf u}$ is \emph{recurrent}.
Moreover, if for each factor the distances between its consecutive occurrences are bounded, then ${\bf u}$ is \emph{uniformly recurrent}. The \emph{language} $\mathcal{L}(\bf u)$ is the set of factors occurring in $\bf u$.
The language $\mathcal{L}(\bf u)$ is \emph{closed under reversal} if for each factor $w=w_0w_1\cdots w_{n-1}$, its \emph{reverse} $w^R=w_{n-1}\cdots w_1 w_0$ is also a~factor of $\bf u$.
A word $w$ is a~\emph{palindrome} if $w=w^R$. Let us denote $\overline{\tt 0}=\tt 1$ and $\overline{\tt 1}=\tt 0$, then for any binary word $w$ its \emph{bit complement} is $\overline{w}=\overline{w_0}\ \overline{w_1}\cdots \overline{w_{n-1}}$.

Consider a factor $w$ of a recurrent infinite word ${\bf u} = u_0 u_1 u_2 \cdots$. Let $j < \ell$ be two consecutive occurrences of $w$ in $\bf u$. Then the word $u_j u_{j+1} \cdots u_{\ell-1}$ is a~\emph{return word} to $w$ in $\bf u$.

The \emph{(factor) complexity} of an infinite word ${\bf u}$ is the mapping ${\mathcal C}_{\bf u}: \mathbb N \to \mathbb N$ defined by
${\mathcal C}_{\bf u}(n) = \# \{w \in \LL({\bf u}) : |w| =  n \}$.

Given a word $w \in \LL({\bf u})$, we define the sets of left extensions, right extensions and bi-extensions of $w$ in ${\bf u}$ over an alphabet $\mathcal A$ respectively as
$$
L_{{\bf u}}(w) = \{ {\tt i} \in {\mathcal A} : {\tt i}w \in \LL({\bf u}) \},
\qquad
R_{{\bf u}}(w) = \{ {\tt j} \in {\mathcal A} : w{\tt j} \in \LL({\bf u}) \}
$$
and
$$
B_{{\bf u}}(w) = \{ ({\tt i},{\tt j}) \in {\mathcal A} \times {\mathcal A} : {\tt i}w{\tt j} \in \LL({\bf u}) \}. $$

If $\#L_{{\bf u}}(w)>1$, then $w$ is called \emph{left special (LS)}. If $\#R_{{\bf u}}(w)>1$, then $w$ is called \emph{right special (RS)}. If $w$ is both LS and RS, then it is called \emph{bispecial (BS)}.
We define $b(w)=\#B_{{\bf u}}(w)-\#L_{{\bf u}}(w)-\#R_{{\bf u}}(w)+1$ and we distinguish \emph{ordinary BS factors} with $b(w)=0$, \emph{weak BS factors} with $b(w)<0$ and \emph{strong BS factors} with $b(w)>0$.

A \emph{morphism} is a map $\psi: {\mathcal A}^* \to {\mathcal B}^*$ such that $\psi(uv) = \psi(u)\psi(v)$  for all words $u, v \in {\mathcal A}^*$.
The morphism $\psi$ is \emph{non-erasing} if $\psi(\tt i)\not =\varepsilon$ for each ${\tt i} \in {\mathcal A}$.
Morphisms can be naturally extended to infinite words by setting
$\psi(u_0 u_1 u_2 \cdots) = \psi(u_0) \psi(u_1) \psi(u_2) \cdots\,$.
A \emph{fixed point} of a morphism $\psi:  {\mathcal A}^* \to  {\mathcal A}^*$ is an infinite word $\bf u$ such that $\psi(\bf u) = \bf u$.
We associate to a morphism $\psi: {\mathcal A}^* \to  {\mathcal A}^*$ the \emph{(incidence) matrix} $M_\psi$ defined for each $k,j \in \{0,1,\dots, d-1\}$ as $[M_\psi]_{kj}=|\psi(\tt j)|_{\tt k}$. 

If there exists $N\in \mathbb N$ such that $M_\psi^N$ has positive entries, then $\psi$ is a~\emph{primitive} morphism. 
By definition, we have for each $u \in {\mathcal A}^*$ the following relation for the Parikh vectors $\vec{\psi(u)}=M_\psi\vec{u}$.

Let ${\bf u}$ be an infinite word over an alphabet $\mathcal A$. Then the \emph{uniform frequency} $f_{\tt i}$ of the letter ${\tt i}\in \mathcal A$ is equal to $\alpha$ if for any sequence $(w_{n})$ of factors of ${\bf u}$ with increasing lengths 
$$\alpha=\lim_{n\to \infty}\frac{|w_{n}|_{\tt i}}{|w_{n}|}\,.$$
It is known that fixed points of primitive morphisms have uniform letter frequencies~\cite{Quef87}.

Let ${\bf u}$ be an infinite word over an alphabet $\mathcal A$ and let $\psi:{\mathcal A}^* \to {\mathcal B}^*$ be a~morphism. 
Consider a factor $w$ of $\psi({\bf u})$. We say that $(w_1, w_2)$ is a \emph{synchronization point} of $w$ if $w=w_1w_2$ and for all $p,s \in {\mathcal L}(\psi({\bf u}))$ and $v \in {\mathcal L}({\bf u})$ such that $\psi(v)=pws$ there exists a factorization $v=v_1v_2$ of $v$ with $\psi(v_1)=pw_1$ and $\psi(v_2)=w_2s$. We denote the synchronization point by $w_1\bullet w_2$.


Given a factorial language $L$ and an integer $\ell$, let $L^\ell$ denote the words of length $\ell$ in $L$.
The \emph{Rauzy graph} of $L$ of order $\ell$ is the directed graph
whose vertices are the words of $L^{\ell-1}$, the arcs are the words of $L^{\ell}$, and the arc
corresponding to the word $w$ goes from the vertex corresponding to the prefix of $w$ of length $\ell -1$
to the vertex corresponding to the suffix of $w$ of length $\ell -1$.

Finally, this paper mainly studies properties of the words $\mu(\bf p)$ and $\nu(\bf p)$ that are morphic images
of the word ${\bf p}=\varphi^\omega(\texttt{0})$ studied in~\cite{CORS2022}, where

\noindent
\begin{minipage}[b]{0.3\linewidth}
\centering
\begin{align*}
    \varphi(\texttt{0}) &= \texttt{01}\\
    \varphi(\texttt{1}) &= \texttt{21}\\
    \varphi(\texttt{2}) &= \texttt{0}
\end{align*}
\end{minipage}
\begin{minipage}[b]{0.3\linewidth}
\centering
\begin{align*}
    \mu(\texttt{0}) &= \texttt{011001}\\
    \mu(\texttt{1}) &= \texttt{1001}\\
    \mu(\texttt{2}) &= \texttt{0}
\end{align*}
\end{minipage}
\begin{minipage}[b]{0.3\linewidth}
\centering
\begin{align*}
    \nu(\texttt{0}) &= \texttt{011}\\
    \nu(\texttt{1}) &= \texttt{0}\\
    \nu(\texttt{2}) &= \texttt{01}
\end{align*}
\end{minipage}

\section{Fewest palindromes, least critical exponent, and factor complexity}\label{sec:pal}
\subsection{General result}
\begin{theorem}\label{thm:pairs}
There exists an infinite binary $\beta^+$-free word containing only $p$ palindromes
for the following pairs $(p, \beta)$. Moreover, this list of pairs is optimal.
\begin{enumerate}[label=(\alph*)]
\item $(11,\tfrac{10}3)$\label{r11}
\item $(12,\tfrac{23}7)$\label{r12}
\item $(13,3)$\label{r13}
\item $(15,\tfrac83)$\label{r15}
\item $(18,\tfrac{28}{11})$\label{r18}
\item $(20,\tfrac52)$\label{r20}
\item $(25,\tfrac73)$\label{r25}
\end{enumerate}
\end{theorem}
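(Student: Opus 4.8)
The plan is to treat the two halves of the statement separately: for each pair $(p,\beta)$ we first exhibit an infinite binary $\beta^+$-free word with exactly $p$ palindromes (the \emph{existence} half), and then show that neither quantity can be improved without worsening the other (the \emph{optimality} half). For existence I would use two families of witnesses. For the corners marked red in Table~\ref{tab1}, namely $(18,\tfrac{28}{11})$ and $(20,\tfrac52)$, a purely periodic word $(w)^\omega$ should suffice, so that both the palindrome set and the set of repetitions are finite and can be read off directly from one period. For the remaining, aperiodic corners I would take morphic images of ${\bf p}=\varphi^\omega({\tt 0})$ — in particular $\mu({\bf p})$ and $\nu({\bf p})$ for the $18$- and $20$-palindrome cases highlighted in the abstract, and analogous images for the others — exploiting that the language, the repetitions, and the critical exponent of ${\bf p}$ are already controlled in~\cite{CORS2022}. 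For $p=11$ this replaces the Fibonacci-image word of Fici and Zamboni~\cite{FZ2013} and lowers its exponent from $\tfrac72$ to $\tfrac{10}3^+$.

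The verification of each aperiodic witness splits into a palindrome count and a critical-exponent computation, and for both I would pass through synchronization points. First I would establish that the relevant morphism $\psi$ admits a synchronization point $w_1\bullet w_2$ on every sufficiently long factor of $\psi({\bf p})$; this lets me lift any palindromic factor of $\psi({\bf p})$ to a factor of ${\bf p}$, so that the reversal-closed structure of $\LL({\bf p})$ bounds the palindromes and the count reduces to a finite check. For the critical exponent I would transport repetitions the same way: a high-exponent factor of $\psi({\bf p})$ comes, through those synchronization points, from a repetition in ${\bf p}$, whose extremal exponents are governed by the return words to the bispecial factors of ${\bf p}$. Enumerating these return words pins the exact value of $E(\psi({\bf p}))$, and one checks that it equals $\beta$ with the supremum not attained, i.e. the word is $\beta^+$-free. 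The periodic witnesses need only the finite, direct forms of both computations.

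For optimality I would argue, corner by corner, that the frontier of Table~\ref{tab1} cannot be pushed inward. For a pair $(p,\beta)$ this means: (i) there is no infinite binary $\beta^+$-free word with at most $p-1$ palindromes, and (ii) $p$ palindromes cannot be realized under the strictly stronger repetition threshold $\beta'^+$ of the preceding column. Each such non-existence I would prove by a backtracking search: fix the finite set of admissible palindromes together with the repetition bound, and grow the tree of binary words avoiding every longer palindrome and every repetition above the threshold. The goal is to show this tree is finite, every branch dying at bounded depth because any further letter either completes a forbidden repetition or forces a new palindrome; a finite tree means no infinite word satisfies the constraints. The extremal endpoints $(\infty,2)$ (the Thue--Morse word) and $(9,\infty)$ are supplied by Fici and Zamboni~\cite{FZ2013}.

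The main obstacle is twofold. On the existence side it is the \emph{exact} determination of the critical exponents of the morphic images: one must enumerate all return words to every bispecial factor, rather than merely exhibit one long repetition, to be certain the computed $\beta$ is the true supremum and is not exceeded by some longer, less obvious power. On the optimality side it is proving that each backtracking search genuinely terminates — exhibiting an explicit depth beyond which the palindrome-and-repetition constraints become unsatisfiable — since a priori the avoidance language could be infinite. This termination argument is the most delicate point, and is where a finite but substantial computer verification carries the weight.
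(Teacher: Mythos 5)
Your optimality argument (backtracking searches showing, e.g., that no infinite cubefree word contains at most $14$ palindromes) matches the paper's, and your plan for the pairs $(18,\tfrac{28}{11})$ and $(20,\tfrac52)$ via $\mu({\bf p})$, $\nu({\bf p})$, synchronization points, bispecial factors and return words is exactly the paper's route (\Cref{thm:fc} and Section~\ref{sec:mu_nu}). But the existence half of your proposal contains two genuine errors. First, a purely periodic word $(w)^\omega$ can never witness $(18,\tfrac{28}{11})$ or $(20,\tfrac52)$: it contains $w^k$ for every $k$, so its critical exponent is infinite and it is not $\beta^+$-free for any finite $\beta$. The red cells in \Cref{tab1} mean ``polynomially many words'', not ``periodic words'', and this part of your plan also contradicts your own subsequent assignment of $\mu({\bf p})$ and $\nu({\bf p})$ to those same two cases.

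Second, and more seriously, using ``analogous images of ${\bf p}$'' for the remaining five pairs provably cannot work for $(25,\tfrac73)$. The paper computes $E^*({\bf p})=1+\frac{\beta^2}{\beta^2-1}\doteq 2.48$, where $\beta\doteq 1.75488$ is the dominant root of $t^3-2t^2+t-1$, and it cites the inequality $E^*(\psi({\bf u}))\ge E^*({\bf u})$, valid for every non-erasing morphism $\psi$ when ${\bf u}$ has uniform letter frequencies. Hence every non-erasing morphic image of ${\bf p}$ has asymptotic critical exponent at least $2.48>\tfrac73$, so it contains repetitions of exponent greater than $\tfrac73$ and is never $\tfrac73^+$-free. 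The paper instead obtains $(25,\tfrac73)$ --- and the pairs $(11,\tfrac{10}3)$, $(12,\tfrac{23}7)$, $(13,3)$, $(15,\tfrac83)$ --- by applying uniform synchronizing morphisms to \emph{arbitrary} binary $\tfrac73^+$-free words (resp.\ ternary squarefree words), with $\beta^+$-freeness of the image certified by the finite check of \Cref{lem:MRS} (Mol--Rampersad--Shallit); this technique, absent from your proposal, is essential there and also yields the exponential counts of \Cref{thm:exp_pairs}. A minor further point: your palindrome-lifting step invokes ``the reversal-closed structure of $\LL({\bf p})$'', but $\LL({\bf p})$ is \emph{not} closed under reversal ($\texttt{02}\in\LL({\bf p})$ while $\texttt{20}\notin\LL({\bf p})$); the paper's palindrome counts are instead plain finite checks, which suffice because any palindrome of length $n\ge 3$ has a palindrome of length $n-2$ as its central factor, so the absence of palindromes at two consecutive lengths rules out all longer ones.
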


\begin{proof}
The optimality is obtained by backtracking. For example, the step between \cref{r13,r15} is obtained by showing that
there exists no infinite cubefree word containing at most $14$ palindromes.
The proof of the positive results is split in two cases, depending on the factor complexity of the considered words,
see~\Cref{thm:exp_pairs,thm:fc}. Let us already remark that, in any case, it is easy to
check that the proposed word does not contain more than the claimed number of palindromes
since it only requires to check the factors up to some finite length.
\end{proof}

\subsection{Exponential cases}
We need some terminology and a lemma from~\cite{Mol&Rampersad&Shallit:2020}.
A morphism $f:\Sigma^*\rightarrow\Delta^*$ is \emph{$q$-uniform\/} if $|f(a)|=q$ for every $a\in\Sigma$, and is called \emph{synchronizing} if for all $a,b,c\in\Sigma$ and $u,v\in \Delta^*$, if $f(ab)=uf(c)v$, then either $u=\varepsilon$ and $a=c$, or $v=\varepsilon$ and $b=c$.
\begin{lemma}~\cite[Lemma 23]{Mol&Rampersad&Shallit:2020}\label{lem:MRS}
Let $a,b\in\mathbb{R}$ satisfy $1<a<b$. Let $\alpha\in\{a,a^+\}$ and $\beta\in\{b,b^+\}$.
Let $h\colon \Sigma^*\rightarrow \Delta^*$ be a synchronizing $q$-uniform morphism. Set
$$t = \max\paren{\frac{2b}{b-a},\frac{2(q-1)(2b-1)}{q(b-1)}}.$$
If $h(w)$ is $\beta$-free for every $\alpha$-free word $w$ with
$|w|\leq t$, then $h(z)$ is $\beta$-free for every $\alpha$-free word $z \in \Sigma^*$.
\label{mrs}
\end{lemma}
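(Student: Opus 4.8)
The plan is to prove the contrapositive. Suppose $z$ is an $\alpha$-free word such that $h(z)$ is \emph{not} $\beta$-free; I will exhibit an $\alpha$-free factor $w$ of $z$ with $|w|\le t$ for which $h(w)$ is already not $\beta$-free, contradicting the hypothesis. Since $h$ is $q$-uniform it is natural to view $h(z)$ as a concatenation of blocks $h(z_0)h(z_1)\cdots$, each of length $q$, whose boundaries are exactly the multiples of $q$. I would first choose a \emph{shortest} factor $R$ of $h(z)$ that is not $\beta$-free. Then $R$ is a repetition of some period $p$ and exponent $e$ (with $e\ge b$ when $\beta=b$, and $e>b$ when $\beta=b^+$), and minimality forces the exponent to barely exceed the threshold, so $|R|\le bp+1$. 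Letting $w$ be the shortest factor of $z$ whose image contains this occurrence of $R$, the word $w$ is $\alpha$-free and $h(w)$ is not $\beta$-free, so the whole problem reduces to bounding $|w|$; since $R$ meets at most $\tfrac{|R|}{q}+2$ blocks, this in turn amounts to bounding $|R|$.

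When the period is short, say $p<\tfrac{2q-1}{b-1}$, I would simply combine $|R|\le bp+1$ with the block-covering estimate for $w$; after the floor/boundary bookkeeping this yields $|w|\le\tfrac{2(q-1)(2b-1)}{q(b-1)}$, the second term of $t$. Note that this term does not involve $a$: for short periods the offending repetition is intrinsically short, and no use of the $\alpha$-freeness of $z$ is needed.

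The remaining case $p\ge\tfrac{2q-1}{b-1}$ guarantees $|R|\ge bp\ge p+2q-1$, so $R$ contains a full block together with its translate by $p$, and this is where synchronization enters. Fixing a block boundary $k_0$ inside $R$ with $[k_0,k_0+q)\subseteq R$, periodicity places a second occurrence of the length-$q$ word $h(c)$ (where $c=z_{k_0/q}$) at position $k_0+p$, lying inside two consecutive blocks $h(z_jz_{j+1})$. Writing $h(z_jz_{j+1})=u\,h(c)\,v$ and invoking the synchronizing property, neither $u$ nor $v$ can be a proper nonempty prefix or suffix, which forces $k_0+p$ to be a block boundary; hence $q\mid p$. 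Writing $p=qp'$ and using that a synchronizing morphism is injective on letters, the maximal block-aligned subword of $R$, of length at least $|R|-2(q-1)$, is the image $h(y)$ of a factor $y$ of $z$ with period $p'$ and exponent at least $e-\tfrac{2(q-1)}{p}$.

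Since $z$ is $\alpha$-free, this forces $e-\tfrac{2(q-1)}{p}\le a$, which both caps the exponent $e$ and bounds the period by $p\le\tfrac{2(q-1)}{b-a}$; feeding these into $|R|=ep$ and the covering estimate gives $|w|\le\tfrac{2b}{b-a}$, the first term of $t$. Taking the maximum over the two cases yields $|w|\le t$, the desired contradiction. I expect the main obstacle to be precisely this long-period step: making the synchronization argument airtight (that the translated block must realign, giving $q\mid p$) and controlling the exponent lost to the two boundary blocks, together with the delicate floor/boundary accounting needed to land on exactly $\tfrac{2b}{b-a}$ and $\tfrac{2(q-1)(2b-1)}{q(b-1)}$ rather than on slightly larger constants. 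The four combinations $\alpha\in\{a,a^+\}$ and $\beta\in\{b,b^+\}$ only affect whether the intermediate inequalities are strict and should be dispatched uniformly.
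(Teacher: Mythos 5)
First, a framing remark: the paper does not prove this lemma at all --- it is imported verbatim from Mol--Rampersad--Shallit (their Lemma~23) --- so your attempt can only be judged against the source's argument. Your overall architecture is the right one (contrapositive; block decomposition of $h(z)$; synchronization forcing the period of a long repetition to realign on block boundaries, hence $q\mid p$; pull-back of the aligned part; $\alpha$-freeness of $z$). But the quantitative execution has two gaps that are not mere ``floor/boundary accounting'': as written, both of your cases land on thresholds strictly larger than $t$, and since the hypothesis of the lemma only guarantees $\beta$-freeness of $h(w)$ for $|w|\le t$, producing a counterexample word of length at most $t'$ with $t'>t$ yields no contradiction and hence does not prove the statement.

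Concretely: (i) in your long-period case, after obtaining $p\le \frac{2(q-1)}{b-a}$ and $e\le a+\frac{2(q-1)}{p}$, you propose to ``feed these into $|R|=ep$ and the covering estimate''. That route gives $|w|\le \frac{ap+2(q-1)-2}{q}+2$, whose worst case equals $\frac{2b}{b-a}+2-\frac{4b-2a}{q(b-a)}$; this exceeds $\frac{2b}{b-a}$ by essentially $2$ whenever $q>\frac{2b-a}{b-a}$ (e.g.\ $a=2$, $b=3$, $q=10$ gives $7.2$ versus the required $6$). The correct route bounds $|w|$ in the domain rather than through $|R|$: with $x$ the block-aligned pull-back one has $|w|\le |x|+2$, and $\alpha$-freeness gives $|x|\le ap/q$, whence $|w|\le \frac{2a(q-1)}{q(b-a)}+2=\frac{2(qb-a)}{q(b-a)}<\frac{2b}{b-a}$. (ii) In your short-period case you rely only on minimality ($|R|\le bp+1$) plus covering, explicitly renouncing any structural input; the worst case of that computation is $\frac{4qb-2q-2b+1}{q(b-1)}$, which exceeds the second term of $t$, namely $\frac{4qb-2q-4b+2}{q(b-1)}$, by $\frac{2b-1}{q(b-1)}>0$, and no rounding refinement closes this. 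What is actually needed there is synchronization again, with the dichotomy taken on divisibility rather than on the size of $p$: if $q\nmid p$, then $R$ cannot contain a full block together with its translate by $p$, so $|R|\le p+2q-2$ outright (minimality of $R$ is never needed anywhere); combined with $|R|\ge bp$ this forces $p\le\frac{2(q-1)}{b-1}$, and then $|w|\le\frac{|R|-2}{q}+2$ lands exactly on $\frac{2(q-1)(2b-1)}{q(b-1)}$. If instead $q\mid p$, the pull-back argument of (i) applies regardless of how small $p$ is and yields the first term of $t$. So the lemma is provable along the lines you set up, but with your period-length split and your two estimates the constants provably cannot come out to $t$; this is exactly the step you flagged as the likely obstacle, and it is where the proof as proposed fails rather than merely needing polish.
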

The results in this subsection use the following steps.
We find an appropriate uniform synchronizing morphism $h$ by exhaustive search.
We use \Cref{lem:MRS} to show that $h$ maps every binary $\tfrac73^+$-free word
(resp. ternary squarefree word) to a suitable binary $\beta^+$-free word.
Since there are exponentially many binary $\tfrac73^+$-free words~\cite{KS03}
(resp. ternary squarefree words~\cite{Shur:2012}), there are also exponentially many binary $\beta^+$-free words.

\begin{theorem}\label{thm:exp_pairs}
There exist exponentially many infinite binary $\beta^+$-free words containing at most $p$ palindromes
for the following pairs $(p, \beta)$.
\begin{enumerate}[label=(\alph*)]
\item $(11,\tfrac{10}3)$\label{e11}
\item $(12,\tfrac{23}7)$\label{e12}
\item $(13,3)$\label{e13}
\item $(15,\tfrac83)$\label{e15}
\item $(18,\tfrac{13}5)$\label{e18}
\item $(19,\tfrac{28}{11})$\label{e19}
\item $(21,\tfrac52)$\label{e21}
\item $(25,\tfrac73)$\label{e25}
\end{enumerate}
\end{theorem}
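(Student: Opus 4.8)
The plan is to realize each pair $(p,\beta)$ by a single $q$-uniform synchronizing morphism $h$, found by exhaustive search, that maps an exponentially large source language into binary words. For each pair the source is either the set of binary $\tfrac73^+$-free words or the set of ternary squarefree words; the pair \cref{e25} must use the latter, since $\beta=\tfrac73$ would make $a=b$ in \Cref{lem:MRS}, whereas for every other pair $\beta>\tfrac73$ allows $a=\tfrac73$. Both source classes are known to grow exponentially by \cite{KS03} (resp.\ \cite{Shur:2012}). Three properties must then be verified for each $h$: that every image $h(z)$ is $\beta^+$-free, that every image contains at most $p$ palindromes, and that $z\mapsto h(z)$ is injective, so that the exponential lower bound on the number of sources transfers to the images.

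The $\beta^+$-freeness is exactly the situation handled by \Cref{lem:MRS}. Taking $(a,b)=(\tfrac73,\beta)$ with $\alpha=a^+$ (resp.\ $(a,b)=(2,\tfrac73)$ with $\alpha=a$), and the target exponent $b^+$, I would compute the threshold $t=\max\paren{\tfrac{2b}{b-a},\tfrac{2(q-1)(2b-1)}{q(b-1)}}$ and verify by a finite computation that $h(w)$ is $\beta^+$-free for every source word $w$ with $\abs{w}\le t$. Since the search guarantees that $h$ is $q$-uniform and synchronizing, \Cref{lem:MRS} then upgrades this to $\beta^+$-freeness of $h(z)$ for all source words $z$, finite or infinite. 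The hypothesis $1<a<b$ is precisely what forces the routing of \cref{e25} through the ternary squarefree words with $a=2$.

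The delicate point is the uniform bound $p$ on the number of palindromes, which I would reduce to a bound on their length. Because $h$ is $q$-uniform and synchronizing, every image factor of length at least $2q-1$ has a unique block decomposition, so the positions of the block boundaries of $h$ inside it are determined. If $u$ is a palindrome factor, then $u=u^R$ forces the boundaries read left to right to coincide with their mirror images, and reflecting a boundary replaces a block $h(a)$ by the reversed block $h(a)^R$. I would therefore select (the search can enforce this) morphisms for which no reversed block $h(a)^R$ occurs as a block $h(b)$ nor straddles a boundary compatibly; then the mirrored decomposition cannot extend past the central boundary region, capping any palindrome length at a constant $M=O(q)$. Once palindrome length is bounded by $M$, the relevant factors are exactly the images of the finitely many source factors of length at most $\ceil{M/q}+1$, a finite and computable set, and one checks directly that at most $p$ of its members are palindromes. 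Making this length bound rigorous for each $h$ — verifying the reversed-block incompatibility and converting it into the explicit constant $M$ — is the step I expect to require the most care, and is the main obstacle.

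Finally, a $q$-uniform synchronizing morphism has pairwise distinct codewords and is therefore injective on words, so distinct source words of length $m$ yield distinct image words of length $qm$. Consequently the number of binary $\beta^+$-free words of length $n$ with at most $p$ palindromes is at least the number of source words of length $\floor{n/q}$, which is $\gamma^{\Theta(n)}$ for some $\gamma>1$ by \cite{KS03} (resp.\ \cite{Shur:2012}). This yields the claimed exponential growth for each of \cref{e11}--\cref{e25}.
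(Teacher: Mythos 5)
Your skeleton is exactly the paper's: find a $q$-uniform synchronizing morphism by exhaustive search, feed it binary $\tfrac73^+$-free words (ternary squarefree words for \cref{e25}, for precisely the reason you give: \Cref{lem:MRS} needs $a<b$), verify the hypothesis of \Cref{lem:MRS} by a finite check to get $\beta^+$-freeness of all images, and transfer the exponential growth of the source class through injectivity of the uniform code. The paper's proof consists of exhibiting the eight explicit morphisms and asserting these checks. The genuine gap in your proposal is the palindrome step. Your sufficient condition --- that no reversed block $h(a)^R$ occurs as a block or straddling a boundary --- is not satisfied by the morphisms that actually realize these pairs, and for the pair $(15,\tfrac83)$ it is not satisfiable at all at the paper's block length: the paper uses $h(\texttt{0})=\texttt{001}$, $h(\texttt{1})=\texttt{101}$, where $h(\texttt{1})$ is itself a palindrome, so $h(\texttt{1})^R$ occurs as a block. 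Worse, for \emph{every} $3$-uniform binary morphism your condition fails: a binary word $w$ of length $3$ always has $w^R$ among its conjugates, hence $w^R$ is a factor of $ww$; and every infinite $\tfrac73^+$-free binary word contains $\texttt{00}$ or $\texttt{11}$, so some image contains $h(a)h(a)$ and therefore $h(a)^R$. Thus the search you describe would come up empty at $q=3$, and you have not shown that, for each pair, some $q$ admits a morphism satisfying simultaneously your reversed-block condition, the freeness transfer, and the palindrome count. Since your entire bound on palindrome length rests on that condition, the proof as written cannot be completed.

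The missing idea is much more elementary: palindromes nest. If $u$ is a palindrome of length $n\ge 2$, its central factor obtained by deleting the first and last letters is a palindrome of length $n-2$. Hence, if the factor language of the images contains no palindrome of length $L$ and none of length $L+1$, it contains no palindrome of length $\ge L$ whatsoever --- no structural hypothesis on $h$ is needed. Since $h$ is $q$-uniform, every factor of length at most $L+1$ of any image $h(z)$ occurs in $h(z')$ for some source factor $z'$ with $|z'|\le \ceil{(L+1)/q}+1$, so one can enumerate all such factors over the finitely many admissible source words $z'$, verify the absence of palindromes at two consecutive lengths, and count the shorter palindromes. This finite computation is exactly what the paper alludes to when it says that bounding the number of palindromes ``only requires to check the factors up to some finite length,'' and substituting it for your reversed-block mechanism repairs your argument while keeping everything else (the use of \Cref{lem:MRS}, the injectivity count, the role of \cite{KS03} and \cite{Shur:2012}) intact.
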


\begin{proof}{\ }
\begin{enumerate}[label=(\alph*)]
\item $(11,\tfrac{10}3)$: Applying the 39-uniform morphism
\begin{align*}
\texttt{0}&\rightarrow\texttt{001011001011100101110010110010111001011} \\
\texttt{1}&\rightarrow\texttt{100101100101100101110010110010111001011}
\end{align*}
to any binary $\tfrac73^+$-free word gives a $\tfrac{10}3^+$-free binary word containing at most $11$ palindromes.
\item $(12,\tfrac{23}7)$: Applying the 45-uniform morphism
\begin{align*}
\texttt{0}&\rightarrow\texttt{000101100010111000101110001011000101110001011} \\
\texttt{1}&\rightarrow\texttt{100010110001011000101110001011000101110001011}
\end{align*}
to any binary $\tfrac73^+$-free word gives a $\tfrac{23}7^+$-free binary word containing at most $12$ palindromes.
\item $(13,3)$: Applying the 7-uniform morphism
\begin{align*}
\texttt{0}&\rightarrow\texttt{0001011} \\
\texttt{1}&\rightarrow\texttt{1001011}
\end{align*}
to any binary $\tfrac73^+$-free word gives a cubefree binary word containing at most $13$ palindromes.
\item $(15,\tfrac83)$: Applying the 3-uniform morphism
\begin{align*}
\texttt{0}&\rightarrow\texttt{001} \\
\texttt{1}&\rightarrow\texttt{101}
\end{align*}
to any binary $\tfrac73^+$-free word gives a $\tfrac83^+$-free binary word containing at most $15$ palindromes.
\item $(18,\tfrac{13}5)$: Applying the 72-uniform morphism
\begin{align*}
\texttt{0}&\rightarrow\texttt{001011001100101100101001011001100101100110010100101100101001011001100101} \\
\texttt{1}&\rightarrow\texttt{100110010100101100110010110010100101100110010100101100101001011001100101}
\end{align*}
to any binary $\tfrac73^+$-free word gives a $\tfrac{13}5^+$-free binary word containing at most $18$ palindromes.
\item $(19,\tfrac{28}{11})$: Applying the 49-uniform morphism
\begin{align*}
\texttt{0}&\rightarrow\texttt{0010110010110101100101001011001010010110101100101} \\
\texttt{1}&\rightarrow\texttt{1010110010100101100101101011001010010110101100101}
\end{align*}
to any binary $\tfrac73^+$-free word gives a $\tfrac{28}{11}^+$-free binary word containing at most $19$ palindromes.
\item $(21,\tfrac52)$: Applying the 10-uniform morphism
\begin{align*}
\texttt{0}&\rightarrow\texttt{0011001101} \\
\texttt{1}&\rightarrow\texttt{1001011001}
\end{align*}
to any binary $\tfrac73^+$-free word gives a $\tfrac52^+$-free binary word containing at most $21$ palindromes.
\item $(25,\tfrac73)$: Applying the 36-uniform morphism
\begin{align*}
\texttt{0}&\rightarrow\texttt{001101100101100110110010011001011001} \\
\texttt{1}&\rightarrow\texttt{101100100110100110110010011001011001} \\
\texttt{2}&\rightarrow\texttt{001101100110100110110010011001011001}
\end{align*}

to any ternary squarefree word gives a $\tfrac73^+$-free binary word containing at most $25$ palindromes.
\end{enumerate}

\end{proof}

\subsection{Polynomial cases}
\begin{theorem}~\cite{CORS2022}\label{lemmap}
Every bi-infinite ternary cubefree word avoiding
 $$F=\textnormal{\{\texttt{00},\texttt{11},\texttt{22},\texttt{20},\texttt{212},\texttt{0101},\texttt{02102},\texttt{121012},\texttt{01021010},\texttt{21021012102}}\}$$
has the same set of factors as $\bf p$.
\end{theorem}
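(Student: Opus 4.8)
The plan is to prove $\LL(\mathbf{w})=\LL(\mathbf{p})$ for an arbitrary bi-infinite ternary cubefree word $\mathbf{w}$ avoiding $F$, and I would first reduce this to the single inclusion $\LL(\mathbf{w})\subseteq\LL(\mathbf{p})$. The matrix $M_\varphi$ is primitive: its substitution graph is strongly connected, since $\texttt{0}\to\texttt{0}$ and $\texttt{0}\to\texttt{1}\to\texttt{2}\to\texttt{0}$, and it is aperiodic because of the loop at $\texttt{0}$. Hence $\mathbf{p}$ is uniformly recurrent, so every factor $v$ of $\mathbf{p}$ occurs in every sufficiently long factor of $\mathbf{p}$. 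If $\LL(\mathbf{w})\subseteq\LL(\mathbf{p})$, then any such $v$ already occurs in every long enough factor of the bi-infinite word $\mathbf{w}$ (each such factor lies in $\LL(\mathbf{p})$), which gives the reverse inclusion $\LL(\mathbf{p})\subseteq\LL(\mathbf{w})$ and thus equality.

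Write $\mathcal{V}$ for the set of bi-infinite ternary cubefree words avoiding $F$. To prove $\LL(\mathbf{w})\subseteq\LL(\mathbf{p})$ I would construct a desubstitution. Because $F$ contains $\texttt{00},\texttt{11},\texttt{22},\texttt{20}$, the allowed ordered pairs are exactly $\texttt{01},\texttt{02},\texttt{10},\texttt{12},\texttt{21}$. Consequently every $\texttt{2}$ is followed by $\texttt{1}$, every $\texttt{0}$ is followed by $\texttt{1}$ or $\texttt{2}$, and every $\texttt{1}$ is preceded by $\texttt{0}$ or $\texttt{2}$; in particular no $\texttt{1}$ can begin a block of $\{\varphi(\texttt{0}),\varphi(\texttt{1}),\varphi(\texttt{2})\}=\{\texttt{01},\texttt{21},\texttt{0}\}$. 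Cutting $\mathbf{w}$ before each $\texttt{0}$ and each $\texttt{2}$ therefore yields a unique factorization into these blocks, and reading off the blocks defines a word $D(\mathbf{w})\in\{\texttt{0},\texttt{1},\texttt{2}\}^{\mathbb{Z}}$ with $\varphi(D(\mathbf{w}))=\mathbf{w}$.

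The crux is the preservation lemma: $D(\mathbf{w})\in\mathcal{V}$. Cubefreeness transfers for free, since a cube $u^3$ in $D(\mathbf{w})$ would produce the cube $\varphi(u^3)=\varphi(u)^3$ in $\mathbf{w}$. The hard part, and the step I expect to be the main obstacle, is to show that $D(\mathbf{w})$ avoids $F$. I would argue by contradiction: if some $f\in F$ occurred in $D(\mathbf{w})$, I would extend $f$ by the context that the length-$2$ constraints force, apply $\varphi$, and exhibit inside the resulting factor of $\mathbf{w}$ a shorter member of $F$ or an explicit cube. This is exactly why the longer words of $F$ are present, and it amounts to checking that $F$ is closed under this ``desubstitution with forced context'': for instance $\varphi(\texttt{00})=\texttt{0101}$; the forced continuation $\texttt{2121}$ of $\texttt{212}$ gives $\varphi(\texttt{2121})=\texttt{021021}\supseteq\texttt{02102}$; the forced context $\texttt{10101}$ of $\texttt{0101}$ gives $\varphi(\texttt{10101})=\texttt{2101210121}\supseteq\texttt{121012}$; and $\varphi(\texttt{02102})=\texttt{01021010}$. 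I expect the bookkeeping for the two longest forbidden words $\texttt{01021010}$ and $\texttt{21021012102}$ to be the most delicate, since their images close the chain only after adding context and may surface as a cube rather than as a forbidden factor.

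Once the preservation lemma is available, iterating $D$ gives $\mathbf{w}=\varphi^k(\mathbf{w}_k)$ with $\mathbf{w}_k=D^k(\mathbf{w})\in\mathcal{V}$ for all $k$. Since $\min_{\texttt{a}}\abs{\varphi^k(\texttt{a})}\to\infty$ by primitivity, any fixed factor $u$ of $\mathbf{w}$ of length $n$ lies, once $k$ is large, inside $\varphi^k(\texttt{ab})$ for a single length-$2$ factor $\texttt{ab}$ of $\mathbf{w}_k$. As $\mathbf{w}_k$ avoids $\texttt{00},\texttt{11},\texttt{22},\texttt{20}$, the pair $\texttt{ab}$ is a factor of $\mathbf{p}$, whence $\varphi^k(\texttt{ab})\in\LL(\varphi^k(\mathbf{p}))=\LL(\mathbf{p})$ and $u\in\LL(\mathbf{p})$. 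This yields $\LL(\mathbf{w})\subseteq\LL(\mathbf{p})$, which together with the first paragraph proves the theorem.
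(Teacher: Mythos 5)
Your plan is structurally sound, and it is essentially the scheme used for this circle of results: the paper under review does not prove this theorem at all (it imports it from~\cite{CORS2022}), but its own Lemmas~\ref{lemmamu} and~\ref{lemmanu} run exactly this kind of argument, and the cited source proceeds the same way. Your reduction of $\LL(\mathbf{w})=\LL(\mathbf{p})$ to $\LL(\mathbf{w})\subseteq\LL(\mathbf{p})$ via primitivity and uniform recurrence, the canonical factorization over $\acc{\varphi(\texttt{0}),\varphi(\texttt{1}),\varphi(\texttt{2})}=\acc{\texttt{01},\texttt{21},\texttt{0}}$ (cut before every \texttt{0} and \texttt{2}), the transfer of cubefreeness to $D(\mathbf{w})$, and the closing iteration (a fixed factor eventually lies in $\varphi^k(ab)$ for one of the five admissible pairs $ab$, all of which are factors of $\mathbf{p}$) are all correct, as are your four sample computations.

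The genuine gap is the preservation of $F$-avoidance, which you correctly identify as the crux but do not carry out, and your description of how it should go is not accurate enough to serve as a proof sketch. First, ``the context that the length-$2$ constraints force'' is not sufficient for the long members of $F$: forcing the needed extensions of \texttt{121012}, \texttt{01021010} and \texttt{21021012102} inside $D(\mathbf{w})$ requires, in a specific order, the already-established avoidance of the \emph{longer} patterns \texttt{212}, \texttt{0101}, \texttt{02102}, \texttt{121012}, and twice the cubefreeness of $D(\mathbf{w})$ itself (to exclude $(\texttt{21010})^3$ and $(\texttt{02101})^3$); this is precisely the ordered chain of items (a)--(j) in the proofs of Lemmas~\ref{lemmamu} and~\ref{lemmanu}, whose ternary extension steps could be reused here verbatim since they take place in the preimage. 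Second, the chains do not close the way you predict (``a shorter member of $F$''): for $f=\texttt{121012}$ the forced extension is \texttt{21012101210}, and $\varphi(\texttt{21012101210})=\texttt{0210121021012102101}$ contains \texttt{21021012102}, the \emph{longest} member of $F$; for $f=\texttt{01021010}$ the forced extension is \texttt{1210102101021012}, whose image $\varphi(\texttt{1210102101021012})$ contains the cube $(\texttt{102101210})^3$; and for $f=\texttt{21021012102}$ the forced extension is \texttt{102101210210121021010}, whose image is $(\texttt{210102101210})^3\texttt{1}$. These verifications do in fact succeed, so no step of your plan fails, but they constitute the actual content of the theorem and are absent from the proposal.
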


\begin{lemma}\label{lemmamu}
Every bi-infinite cubefree binary word avoiding
$$F_{18}=\textnormal{\{\texttt{1101},\texttt{00100},\texttt{10101},\texttt{010011},\texttt{1011001011},\texttt{110010110011},\texttt{1011001010010110010}\}}$$
has the same set of factors as $\mu(\bf p)$.
\end{lemma}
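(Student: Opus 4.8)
The plan is to reduce the statement to \Cref{lemmap} by a desubstitution argument through the morphism $\mu$; throughout I write $\LL$ for the language. First I would record two preliminaries. The morphism $\varphi$ is primitive, since its incidence matrix satisfies $M_\varphi^3>0$, so ${\bf p}$ is uniformly recurrent, and hence so is its non-erasing morphic image $\mu({\bf p})$. Because of this, once I know that $\LL({\bf w})\subseteq\LL(\mu({\bf p}))$ for every bi-infinite cubefree binary word ${\bf w}$ avoiding $F_{18}$, the reverse inclusion comes for free: a factor $u$ of $\mu({\bf p})$ occurs in every factor of $\mu({\bf p})$ of length at least some $R(u)$, and ${\bf w}$, being bi-infinite, contains a factor of length $R(u)$, which lies in $\LL(\mu({\bf p}))$ and therefore contains $u$. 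I would also verify, by inspecting factors up to a bounded length, that $\mu({\bf p})$ is itself cubefree and avoids $F_{18}$, so that the common language is actually realized.

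It then remains to prove $\LL({\bf w})\subseteq\LL(\mu({\bf p}))$ for an arbitrary such ${\bf w}$. The core is to show that ${\bf w}$ admits a factorization ${\bf w}=\mu({\bf t})$ into the blocks $\mu(\texttt 0)=\texttt{011001}$, $\mu(\texttt 1)=\texttt{1001}$, $\mu(\texttt 2)=\texttt 0$ for some bi-infinite ternary word ${\bf t}$. This is a synchronization statement: I would exhibit a constant $L$ such that every cubefree, $F_{18}$-avoiding binary factor of length at least $L$ has a synchronization point forced to sit at a block boundary, and that these boundaries are globally consistent, producing bi-infinite block boundaries and hence ${\bf t}$.

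Given the factorization, transferring the constraints back to ${\bf t}$ is routine. If ${\bf t}$ contained a cube $z^3$, then $\mu(z)^3=\mu(z^3)$ would be a cube in ${\bf w}$, as $\mu$ is non-erasing; so ${\bf t}$ is cubefree. For the avoidance of $F$, I would carry out the finite check: for each $f\in F$ and each short extension $afb$ over $\{\texttt 0,\texttt 1,\texttt 2\}$, confirm that $\mu(afb)$ contains either a cube or a factor in $F_{18}$, which forces ${\bf t}$ to avoid $F$. By \Cref{lemmap}, ${\bf t}$ then has the same factors as ${\bf p}$, i.e. $\LL({\bf t})=\LL({\bf p})$. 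Finally, any factor of ${\bf w}=\mu({\bf t})$ of length $n$ lies inside $\mu(s)$ for some factor $s$ of ${\bf t}$ of length at most $n+2$, and $s\in\LL({\bf p})$, so this factor occurs in $\mu({\bf p})$; thus $\LL({\bf w})\subseteq\LL(\mu({\bf p}))$, which together with the first paragraph finishes the argument.

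The hard part is the synchronization step. The morphism $\mu$ is neither a prefix nor a suffix code: $\mu(\texttt 2)=\texttt 0$ is a prefix of $\mu(\texttt 0)=\texttt{011001}$ and also occurs inside both $\mu(\texttt 0)$ and $\mu(\texttt 1)$, so block boundaries are genuinely ambiguous for the bare morphism. The whole content of the lemma is that cubefreeness together with the seven factors of $F_{18}$ is exactly strong enough to kill every spurious parsing. Determining the synchronization delay $L$ and checking that no admissible word of length $L$ admits a misaligned block reading is the delicate, essentially computer-assisted, heart of the proof.
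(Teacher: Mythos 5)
Your proof skeleton is the same as the paper's: desubstitute ${\bf w}=\mu({\bf v})$ into the blocks $\mu(\texttt{0}),\mu(\texttt{1}),\mu(\texttt{2})$, pull cubefreeness back to the ternary pre-image, show the pre-image avoids the set $F$, and invoke \Cref{lemmap}. (Your detour through uniform recurrence for the reverse inclusion is correct but unnecessary: once the pre-image has the same language as ${\bf p}$, both inclusions between $\LL({\bf w})$ and $\LL(\mu({\bf p}))$ follow by pushing factors through $\mu$, exactly as in your last displayed argument.) The problem is that the two steps you defer to unspecified computer checks are precisely where the content of the lemma lies, and neither is adequately pinned down. For the factorization ${\bf w}=\mu({\bf v})$ you only assert that some synchronization delay $L$ should exist and call finding it the computer-assisted heart of the proof; that is a statement of what must be proved, not a proof. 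The paper establishes this step by a concrete combinatorial claim: the factors of length at least $5$ of ${\bf w}$ that contain \texttt{0101} exactly as a prefix and a suffix are \texttt{01011001100101}, \texttt{010100101} and \texttt{0101100101}, whence ${\bf w}\in\{\texttt{0110011001},\texttt{01001},\texttt{011001}\}^\omega\subseteq\{\texttt{011001},\texttt{1001},\texttt{0}\}^\omega$. This return-word analysis of \texttt{0101} also delivers the bounded-gap occurrence of block boundaries that your ``globally consistent boundaries'' step silently requires for a bi-infinite parsing.

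The transfer of $F$-avoidance, which you call routine, is where the paper spends most of its effort, and your check fails as calibrated. If ``short extension $afb$'' means one letter on each side, take $f=\texttt{01021010}\in F$ and the extension $\texttt{1}f\texttt{2}$: the word $\mu(\texttt{1010210102})$ has length $42$, contains no element of $F_{18}$, and contains no cube, so no contradiction arises at this window size. The paper instead runs an \emph{ordered} induction over the elements of $F$: each forbidden factor is extended letter by letter, the alternative letters being excluded by previously settled cases or by cubes, until a forced extension by several letters on each side (up to six, e.g.\ \texttt{01021010} is driven to \texttt{1210102101021012}) has an image under $\mu$ that visibly contains a cube or an element of $F_{18}$. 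A blind brute force over all ternary extensions of some fixed window would also suffice in principle, but you neither exhibit such a window nor argue that a finite one exists; its existence is exactly what the paper's case analysis proves, and with the window you implicitly suggest the check simply does not go through.
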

\begin{proof}
Consider a bi-infinite binary cubefree word $\bf w$ avoiding $F_{18}$.\\
The factors of $\bf w$ of length at least 5 that contain \texttt{0101} only as a prefix and a suffix
are \texttt{01011001100101}, \texttt{010100101}, and \texttt{0101100101}.
Thus, $\bf w$ is in $\acc{\texttt{0110011001},\texttt{01001},\texttt{011001}}^\omega$.
So, $\bf w$ is in $\acc{\texttt{011001},\texttt{1001},\texttt{0}}^\omega$.
That is, $\bf w=\mu(\bf v)$ for some bi-infinite ternary word~$\bf v$.
Since $\bf w$ is cubefree, its pre-image $\bf v$ is also cubefree.

To show that $\bf v$ avoids $F$, we consider every $f\in F$
and we show by contradiction that $f$ is not a factor of $\bf v$.
\begin{enumerate}[label=(\alph*)]
 \item If $\bf v$ contains $\texttt{22}$, then $\mu(\texttt{220})=\texttt{00011001}$ and $\mu(\texttt{222})=\texttt{000}$ contain $\texttt{0}^3$ and $\mu(\texttt{221})=\texttt{001001}$ contains $\texttt{00100}\in F_{18}$. \label{22}
 \item If $\bf v$ contains $\texttt{20}$, then $\bf v$ contains $x\texttt{20}$ for $x\in\acc{\texttt{0},\texttt{1}}$ by \ref{22}.\\
  $\mu(x\texttt{20})$ contains \texttt{10010011001} as a suffix, which contains $\texttt{00100}\in F_{18}$. \label{20}
 \item If $\bf v$ contains $\texttt{00}$, then $\bf v$ contains $\texttt{100}$ to avoid the cube $\texttt{000}$ and by \ref{20}.\\
  $\mu(\texttt{100})=\texttt{1001011001011001}$ contains $\texttt{1011001011}\in F_{18}$. \label{00}
 \item If $\bf v$ contains $\texttt{11}$, then $\mu(\texttt{011})$ and $\mu(\texttt{111})$ contain $(\texttt{1001})^3$
 and $\mu(\texttt{211})=\texttt{010011001}$ contains $\texttt{010011}\in F_{18}$. \label{11}
 \item If $\bf v$ contains $\texttt{212}$, then $\bf v$ contains $\texttt{2121}$ by \ref{22} and \ref{20}.\\
  $\bf v$ contains $x\texttt{2121}y$ with $x\in\acc{\texttt{0},\texttt{1}}$ by \ref{22} and $y\in\acc{\texttt{0},\texttt{2}}$ by \ref{11}.\\
  Since $\mu(\texttt{1})$ is a suffix of $\mu(\texttt{0})$ and $\mu(\texttt{2})$ is a prefix of $\mu(\texttt{0})$, 
  then $\mu(x\texttt{2121}y)$ contains the factor $\mu(\texttt{121212})=\mu\paren{(\texttt{12})^3}$.\label{212}
 \item If $\bf v$ contains $\texttt{0101}$, then $\mu(\texttt{0101})=\texttt{01100110010110011001}$ contains $\texttt{110010110011}\in F_{18}$. \label{0101}
 \item If $\bf v$ contains $\texttt{02102}$, then $\bf v$ contains $\texttt{102102}$ by \ref{20} and \ref{00}.\\
  $\mu(\texttt{102102})=\texttt{1001011001010010110010}$ contains $\texttt{1011001010010110010}\in F_{18}$.\label{02102}
 \item If $\bf v$ contains $\texttt{121012}$, then $\bf v$ contains $\texttt{0121012}$ by \ref{11} and \ref{212}.\\
  $\bf v$ contains $\texttt{10121012}$ by \ref{20} and \ref{00}.\\
  $\bf v$ contains $\texttt{210121012}$ by \ref{11} and \ref{0101}.\\
  $\bf v$ contains $\texttt{2101210121}$ by \ref{22} and \ref{20}.\\
  $\bf v$ contains $\texttt{21012101210}$ by \ref{11} and \ref{212}.\\
  $\bf v$ contains $x\texttt{21012101210}$ with $x\in\acc{\texttt{0},\texttt{1}}$ by \ref{22}.\\
  Since $\mu(\texttt{1})$ is a suffix of $\mu(\texttt{0})$, then
  $\mu(x\texttt{21012101210})$ contains $\mu(\texttt{121012101210})=\mu\paren{(\texttt{1210})^3}$.\label{121012}
 \item If $\bf v$ contains \texttt{01021010}, then $\bf v$ contains \texttt{010210102} by \ref{00} and \ref{0101}.\\
 $\bf v$ contains \texttt{0102101021} by \ref{22} and \ref{20}.\\
 $\bf v$ contains \texttt{01021010210} by \ref{11} and \ref{212}.\\
 $\bf v$ contains \texttt{010210102101} by \ref{00} and \ref{02102}.\\
 $\bf v$ contains \texttt{1010210102101} by \ref{20} and \ref{00}.\\
 $\bf v$ contains \texttt{21010210102101} by \ref{11} and \ref{0101}.\\
 $\bf v$ contains \texttt{210102101021012} by \ref{11} and to avoid $(\tt 21010)^3$.\\
 $\bf v$ contains \texttt{1210102101021012} by \ref{22} and to avoid $(\tt 02101)^3$.\\
 $\bf w$ contains $\mu(\texttt{1210102101021012})=\texttt{1001}\mu(\texttt{210})\texttt{1001011001}\mu(\texttt{210})\texttt{1001011001}\mu(\texttt{210})\texttt{10010}$.\\
 To avoid $\texttt{00100}\in F_{18}$, $\bf w$ contains\\
 $\texttt{1001}\mu(\texttt{210})\texttt{1001011001}\mu(\texttt{210})\texttt{1001011001}\mu(\texttt{210})\texttt{100101}=(\texttt{1001}\mu(\texttt{210})\texttt{100101})^3$.
 \item If $\bf v$ contains $\texttt{21021012102}$, then  $\bf v$ contains $\texttt{121021012102}$ by \ref{22} and \ref{02102}.\\
  $\bf v$ contains $\texttt{0121021012102}$ by \ref{11} and \ref{212}.\\ 
  $\bf v$ contains $\texttt{10121021012102}$ by \ref{20} and \ref{00}.\\
  $\bf v$ contains $\texttt{210121021012102}$ by \ref{11} and \ref{0101}.\\
  $\bf v$ contains $\texttt{0210121021012102}$ by \ref{22} and \ref{121012}.\\
  $\bf v$ contains $\texttt{10210121021012102}$ by \ref{20} and \ref{00}.\\
  $\bf v$ contains $\texttt{102101210210121021}$ by \ref{22} and \ref{20}.\\
  $\bf v$ contains $\texttt{1021012102101210210}$ by \ref{11} and \ref{212}.\\
  $\bf v$ contains $\texttt{10210121021012102101}$ by \ref{00} and \ref{02102}.\\
  $\bf v$ contains $\texttt{102101210210121021010}$ by \ref{11} and to avoid $(\tt 1021012)^3$.\\
  $\mu(\texttt{102101210210121021010})=(\mu(\texttt{102101})\texttt{0})^3\texttt{11001}$.
\end{enumerate}
\end{proof}

\begin{lemma}\label{lemmanu}
Every bi-infinite cubefree binary word avoiding
$$F_{20}=\textnormal{\{\texttt{0101},\texttt{1011},\texttt{010010},\texttt{1100110100110011}\}}$$
has the same set of factors as $\nu(\bf p)$.
\end{lemma}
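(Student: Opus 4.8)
The plan is to follow the template of the proof of \Cref{lemmamu}: decompose $\bf w$ as $\nu(\bf v)$ for a bi-infinite ternary word $\bf v$, argue that $\bf v$ is cubefree and avoids the set $F$ of \Cref{lemmap}, and then invoke \Cref{lemmap} to conclude that $\bf v$, and hence $\bf w=\nu(\bf v)$, has the prescribed set of factors.

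First I would establish the decomposition, which is more direct here than for $\mu$ because every block $\nu(\texttt 0)=\texttt{011}$, $\nu(\texttt 1)=\texttt 0$, $\nu(\texttt 2)=\texttt{01}$ begins with \texttt 0 and carries a run of at most two \texttt 1's. Since $\bf w$ is cubefree it avoids \texttt{111}, so every maximal block of \texttt 1's in $\bf w$ has length at most two and \texttt 0's occur infinitely often in both directions. Reading each \texttt 0 together with the maximal run of \texttt 1's immediately following it tiles $\bf w$ uniquely by the blocks \texttt 0, \texttt{01}, \texttt{011}, that is, $\bf w=\nu(\bf v)$ for a unique bi-infinite ternary word $\bf v$. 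As $\nu$ is non-erasing and $\nu(u^3)=\nu(u)^3$, a cube in $\bf v$ would yield a cube in $\bf w$, so $\bf v$ is cubefree.

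The bulk of the work, and the main obstacle, is to show that $\bf v$ avoids every $f\in F$, argued by contradiction in an order in which each case may invoke the previous ones together with cubefreeness. Several cases are immediate from the images: $\nu(\texttt{22})=\texttt{0101}\in F_{20}$, while $\nu(\texttt{20})=\texttt{01011}$ and $\nu(\texttt{00})=\texttt{011011}$ each contain $\texttt{1011}\in F_{20}$, and $\nu(\texttt{01021010})=\texttt{01100110100110011}$ contains the longest forbidden word $\texttt{1100110100110011}\in F_{20}$. The factor \texttt{11} admits no right extension, since \texttt{111} is a cube in $\bf v$ whereas $\nu(\texttt{110})=\texttt{00011}$ and $\nu(\texttt{112})=\texttt{0001}$ both contain the cube \texttt{000}; as $\bf v$ is bi-infinite this is a contradiction. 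Once \texttt{20} and \texttt{22} are excluded, a factor \texttt{212} forces \texttt{2121}, and $\nu(\texttt{2121})=\texttt{010010}\in F_{20}$. This leaves \texttt{0101}, \texttt{02102}, \texttt{121012}, and \texttt{21021012102}, none of whose images alone meets $F_{20}$ or contains a cube. For these I would mimic the forced-extension chains of \Cref{lemmamu}, repeatedly appending the unique letter compatible with cubefreeness and the already-eliminated factors until the resulting image contains a cube or a word of $F_{20}$. I expect the real difficulty to be the bookkeeping for the two longest factors, \texttt{121012} and \texttt{21021012102}, where each forced extension must be verified against both cubefreeness and every previously excluded factor, and where a single slip invalidates the chain.

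Finally, having shown that $\bf v$ is a bi-infinite ternary cubefree word avoiding $F$, \Cref{lemmap} gives that $\bf v$ has the same factors as $\bf p$. Because the parse of $\bf w=\nu(\bf v)$ is unique, the factors of $\nu(\bf v)$ are exactly the sub-factors of the $\nu$-images of the factors of $\bf v$, so $\bf w=\nu(\bf v)$ has the same set of factors as $\nu(\bf p)$, which is the claim.
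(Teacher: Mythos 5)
Your framework is the same as the paper's: parse ${\bf w}$ over $\acc{\texttt{011},\texttt{0},\texttt{01}}$, pull cubefreeness back to the pre-image ${\bf v}$, eliminate the members of $F$ one by one, and conclude via \Cref{lemmap}. Your treatment of the cases you actually do treat (\texttt{00}, \texttt{11}, \texttt{22}, \texttt{20}, \texttt{212}, \texttt{01021010}) is correct and agrees with the paper. But there is a genuine gap: the four remaining cases \texttt{0101}, \texttt{02102}, \texttt{121012}, \texttt{21021012102} are precisely the substance of the lemma, and you do not carry them out --- you only announce that you would ``mimic the forced-extension chains'' of \Cref{lemmamu}. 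Deferring the hard cases to an unexecuted procedure is not a proof; the paper's chains (for instance, ten forced steps for \texttt{21021012102} ending in $\nu(\texttt{102101210210121021010})=(\texttt{0011010011001})^3\texttt{1}$) are exactly the content being claimed, and nothing you wrote certifies that such chains terminate in a contradiction.

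Moreover, the procedure as you describe it --- append the unique letter of ${\bf v}$ compatible with cubefreeness and the previously excluded factors until the $\nu$-image contains a cube or a word of $F_{20}$ --- would in fact stall on \texttt{02102} and on \texttt{121012}. Starting from \texttt{02102}, the forced extensions in ${\bf v}$ lead to \texttt{10210210}, whose image $\nu(\texttt{10210210})=\texttt{0011010011010011}$ contains neither a cube nor a member of $F_{20}$, so your stopping criterion never fires; and the next extension in ${\bf v}$ is not unique, since after \texttt{102102101} both \texttt{0} and \texttt{2} are consistent with cubefreeness and with every factor excluded up to that point. The paper escapes by changing worlds: it maps to ${\bf w}$ and uses cubefreeness of ${\bf w}$ itself to force the binary letters adjacent to the image, obtaining $\texttt{1}\cdot\texttt{0011010011010011}\cdot\texttt{0}=(\texttt{100110})^3$; the same maneuver is needed for \texttt{121012}, where only after appending a forced \texttt{0} in ${\bf w}$ does one get $(\texttt{0100110})^3$. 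So beyond the bookkeeping you anticipate, your sketch is missing an idea: for two of the four omitted cases the contradiction lives in ${\bf w}$, not in ${\bf v}$.
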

\begin{proof}
Consider a bi-infinite binary cubefree word $\bf w$ avoiding $F_{20}$.
Since $\bf w$ is cubefree, $\bf w$ is in $\acc{\texttt{011},\texttt{0},\texttt{01}}^\omega$.
So $\bf w=\nu(\bf v)$ for some bi-infinite ternary word~$\bf v$.
Since $\bf w$ is cubefree, its pre-image $\bf v$ is also cubefree.

To show that $\bf v$ avoids $F$, we consider every $f\in F$
and we show by contradiction that $f$ is not a factor of $\bf v$.
\begin{enumerate}[label=(\alph*)]
 \item If $\bf v$ contains $\texttt{00}$, then $\nu(\texttt{00})=\texttt{011011}$ contains $\texttt{1011}\in F_{20}$.\label{00b}
 \item If $\bf v$ contains $\texttt{11}$, then $\bf v$ contains $\texttt{11}y$ for some letter $y$.\\
 $\nu(\texttt{11}y)$ contains the cube \texttt{000} as a prefix.\label{11b}
 \item If $\bf v$ contains $\texttt{22}$, then $\nu(\texttt{22})=\texttt{0101}\in F_{20}$.\label{22b}
 \item If $\bf v$ contains $\texttt{20}$, then $\nu(\texttt{20})=\texttt{01011}$ contains $\texttt{0101}\in F_{20}$.\label{20b}
 \item If $\bf v$ contains $\texttt{212}$, then $\bf v$ contains $\texttt{2121}$ by \ref{22b} and \ref{20b}.\\ \label{212b}
 $\nu(\texttt{2121})=\texttt{010010}\in F_{20}$.
 \item If $\bf v$ contains $\texttt{0101}$, then  $\bf v$ contains $\texttt{10101}$ by \ref{00b} and \ref{20b}.\\
  $\bf v$ contains $\texttt{210101}$ by \ref{11b} and to avoid $(\texttt{01})^3$.\\
  $\bf v$ contains $\texttt{2101012}$ by \ref{11b} and to avoid $(\texttt{10})^3$.\\
 $\nu(\texttt{2101012})=\texttt{0100110011001}=\texttt{0}(\texttt{1001})^3$. \label{0101b}
 \item If $\bf v$ contains $\texttt{02102}$, then $\bf v$ contains $\texttt{102102}$ by \ref{00b} and \ref{20b}.\\
  $\bf v$ contains $\texttt{1021021}$ by \ref{22b} and \ref{20b}.\\
  $\bf v$ contains $\texttt{10210210}$ by \ref{11b} and \ref{212b}.\\
  $\bf w$ contains $\nu(\texttt{10210210})=\texttt{0011010011010011}$.\\
  To avoid $\texttt{0}^3$ and $\texttt{1}^3$, $\bf w$ contains $\texttt{100110100110100110}=(\texttt{100110})^3$.\label{02102b}
 \item If $\bf v$ contains $\texttt{121012}$, then $\bf v$ contains $\texttt{0121012}$ by \ref{11b} and \ref{212b}.\\
  $\bf v$ contains $\texttt{10121012}$ by \ref{00b} and \ref{20b}.\\
  $\bf v$ contains $\texttt{210121012}$ by \ref{11b} and \ref{0101b}.\\
  $\bf v$ contains $\texttt{2101210121}$ by \ref{22b} and \ref{20b}.\\
  $\bf v$ contains $\texttt{21012101210}$ by \ref{11b} and \ref{212b}.\\
  $\bf w$ contains $\nu(\texttt{21012101210})=\texttt{01001100100110010011}$.\\
  To avoid $\texttt{1}^3$, $\bf w$ contains $\texttt{010011001001100100110}=(\texttt{0100110})^3$.\label{121012b}
 \item If $\bf v$ contains $\texttt{01021010}$, then\\
  $\nu(\texttt{01021010})=\texttt{01100110100110011}$ contains $\texttt{1100110100110011}\in F_{20}$.
 \item If $\bf v$ contains $\texttt{21021012102}$, then  $\bf v$ contains $\texttt{121021012102}$ by \ref{22b} and \ref{02102b}.\\
  $\bf v$ contains $\texttt{0121021012102}$ by \ref{11b} and \ref{212b}.\\
  $\bf v$ contains $\texttt{10121021012102}$ by \ref{00b} and \ref{20b}.\\
  $\bf v$ contains $\texttt{210121021012102}$ by \ref{11b} and \ref{0101b}.\\
  $\bf v$ contains $\texttt{0210121021012102}$ by \ref{22b} and \ref{121012b}.\\
  $\bf v$ contains $\texttt{10210121021012102}$ by \ref{00b} and \ref{20b}.\\
  $\bf v$ contains $\texttt{102101210210121021}$ by \ref{22b} and \ref{20b}.\\
  $\bf v$ contains $\texttt{1021012102101210210}$ by \ref{11b} and \ref{212b}.\\
  $\bf v$ contains $\texttt{10210121021012102101}$ by \ref{00b} and \ref{02102b}.\\
  $\bf v$ contains $\texttt{102101210210121021010}$ by \ref{11b} and to avoid $(\tt 1021012)^3$.\\
 $\nu(\texttt{102101210210121021010})=(\texttt{0011010011001})^3\texttt{1}$.
\end{enumerate}
\end{proof}

\begin{theorem}\label{thm:fc}{\ }
\begin{enumerate}[label=(\alph*)]
\item The word $\mu({\bf p})$ is $\tfrac{28}{11}^+$-free and contains $18$ palindromes.
Every bi-infinite $\tfrac{13}5$-free binary word containing at most $18$ palindromes
has the same set of factors as either
$\mu({\bf p})$, $\overline{\mu({\bf p})}$, $\mu({\bf p})^R$, or $\overline{\mu({\bf p})^R}$.
\item The word $\nu({\bf p})$ is $\tfrac52^+$-free and contains $20$ palindromes.
Every recurrent $\tfrac{28}{11}$-free binary word containing at most $20$ palindromes
has the same set of factors as either
$\nu({\bf p})$, $\overline{\nu({\bf p})}$, $\nu({\bf p})^R$, or $\overline{\nu({\bf p})^R}$. 
\end{enumerate}
\end{theorem}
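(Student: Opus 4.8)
The backbone of both parts is the pair of structural results~\Cref{lemmamu,lemmanu}: each reduces the classification of a bi-infinite cubefree binary word to verifying that it avoids a finite set of forbidden factors ($F_{18}$, respectively $F_{20}$). In each part the plan is therefore to (i) confirm by a finite computation that the specific word $\mu(\mathbf p)$ (resp.\ $\nu(\mathbf p)$) has the stated critical exponent and palindrome count, and (ii) prove that an arbitrary word meeting the hypotheses is forced, after applying one of the four symmetries (identity, bit complement, reversal, reverse complement), to avoid $F_{18}$ (resp.\ $F_{20}$), so that the lemma applies. Reversal and bit complement each preserve the number of distinct palindromes and the set of repetition exponents, so all four words $\mathbf w$, $\overline{\mathbf w}$, $\mathbf w^R$, $\overline{\mathbf w^R}$ satisfy the same freeness and palindrome bounds; since $F_{18}$ and $F_{20}$ are not closed under these symmetries, exactly one representative need avoid the forbidden set, which is why the four variants appear in the conclusion.

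For part (a) I would first verify that $\mu(\mathbf p)$ is $\tfrac{28}{11}^+$-free. Because $\mu$ admits synchronization points and $\mathbf p$ is cubefree with controlled repetitions by~\Cref{lemmap}, every repetition in $\mu(\mathbf p)$ of large period is a $\mu$-image of a repetition of $\mathbf p$, whose exponent is bounded, while the finitely many short-period repetitions are checked directly; together these pin the supremum of exponents at $\tfrac{28}{11}$. The palindrome count is a finite check, since the longest palindrome has bounded length, and enumerating factors up to that length yields exactly $18$ palindromes (note $\tfrac{28}{11}<\tfrac{13}5$, so $\mu(\mathbf p)$ is itself $\tfrac{13}5$-free). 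For uniqueness, let $\mathbf w$ be bi-infinite, $\tfrac{13}5$-free, with at most $18$ palindromes; as $\tfrac{13}5<3$, $\mathbf w$ is cubefree. The crux is a backtracking search showing that one of $\mathbf w$, $\overline{\mathbf w}$, $\mathbf w^R$, $\overline{\mathbf w^R}$ avoids every factor of $F_{18}$: one extends factors letter by letter, pruning any extension that introduces a repetition of exponent $\ge\tfrac{13}5$ or a $19$th distinct palindrome, and checks that the surviving factors are, up to the four symmetries, exactly those of an $F_{18}$-avoiding word. Then~\Cref{lemmamu} identifies that variant's language with $\mathcal L(\mu(\mathbf p))$, and transporting the chosen symmetry back gives the four stated possibilities for $\mathbf w$.

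Part (b) runs parallel, with $\nu(\mathbf p)$, freeness threshold $\tfrac52^+$, hypothesis $\tfrac{28}{11}$-freeness (again $<3$, hence cubefree), and~\Cref{lemmanu}. The one genuine difference is that the hypothesis is recurrence rather than bi-infiniteness, so before invoking the lemma I would pass to the subshift $X$ of all bi-infinite words whose factors lie in $\mathcal L(\mathbf w)$: recurrence makes every factor of $\mathbf w$ bi-extendable inside $\mathcal L(\mathbf w)$, so by compactness every factor of $\mathbf w$ occurs in some element of $X$ and $\mathcal L(X)=\mathcal L(\mathbf w)$. After fixing the symmetry that makes $\mathcal L(\mathbf w)$ avoid $F_{20}$, every bi-infinite word in $X$ is cubefree and avoids $F_{20}$, so~\Cref{lemmanu} gives that each such word has language $\mathcal L(\nu(\mathbf p))$; since these languages cover $\mathcal L(\mathbf w)$ and each equals $\mathcal L(\nu(\mathbf p))$, we conclude $\mathcal L(\mathbf w)=\mathcal L(\nu(\mathbf p))$ up to symmetry.

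The main obstacle is step (ii): excluding the factors of $F_{18}$ (resp.\ $F_{20}$) up to symmetry. Unlike the clean chains of deductions inside~\Cref{lemmamu,lemmanu}, these factors must be ruled out using only the two quantitative constraints — the freeness bound and the ceiling of $18$ or $20$ distinct palindromes — and the asymmetry of the forbidden sets forces careful bookkeeping of which of the four orientations is being driven to a contradiction. Concretely this is a finite but delicate backtracking search, and the subtle point is to bound its depth: one must argue that a bounded window of extensions already determines whether a word can avoid the forbidden set after reorientation, so that the whole verification reduces to inspecting factors up to an explicit finite length.
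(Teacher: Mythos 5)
Your skeleton matches the paper's: establish the critical exponents and palindrome counts of $\mu(\mathbf{p})$ and $\nu(\mathbf{p})$ separately (the exponents in Section~\ref{sec:mu_nu}, the palindromes by a finite check), then reduce the classification to \Cref{lemmamu,lemmanu} via a finite computation plus the four symmetries. The gap is in how you pass from that computation to the hypotheses of the lemmas. Your backtracking establishes a property of the \emph{set} of admissible factors --- that, up to the four symmetries, it coincides with the factors of an $F_{18}$- (resp.\ $F_{20}$-) avoiding word. But the theorem needs a \emph{per-word} statement: all factors of one given word $\mathbf{w}$ must lie in a single symmetry class, so that one symmetry makes $\mathbf{w}$ itself avoid the forbidden set. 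This does not follow from the set-level fact, since a priori a single word could mix factors from different classes. The paper bridges exactly this gap with a Rauzy-graph argument: in case (a) the admissible factors of lengths 19/20 form a graph $R_{18}^{20}$ that is checked to be \emph{disconnected}, with four components that are symmetric copies of the Rauzy graph of $\mu(\mathbf{p})$; a bi-infinite word is a bi-infinite walk, hence confined to one component, and therefore avoids $F_{18}$ (every member of $F_{18}$ has length $\le 19$). In case (b) the graph $R_{20}^{78}$ at lengths 77/78 is \emph{not} disconnected but has four strongly connected components, and it is recurrence that confines the word's walk to a single one of them.

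This omission is fatal in part (b) as you have arranged it. You invoke recurrence only to pass, by compactness, to a subshift of bi-infinite words, and then expect the backtracking search --- which sees only $\tfrac{28}{11}$-freeness and the $20$-palindrome bound --- to produce a symmetry under which $\mathcal{L}(\mathbf{w})$ avoids $F_{20}$. No such search can succeed, because the statement it would certify is false for bi-infinite words in general: the paper's own example $\mathbf{x}=\nu(\mathbf{p})^R\texttt{010110}\nu(\mathbf{p})$ is bi-infinite, $\tfrac52^+$-free (hence $\tfrac{28}{11}$-free) and has exactly $20$ palindromes, yet its factor set properly contains both $\mathcal{L}(\nu(\mathbf{p}))$ and $\mathcal{L}(\nu(\mathbf{p})^R)$, so it equals none of the four allowed languages; if some symmetry made $\mathbf{x}$ avoid $F_{20}$, \Cref{lemmanu} would force such an equality. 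So recurrence cannot be quarantined in the compactness step; it must enter the combinatorial argument itself, and the paper uses it precisely to keep the word inside one strongly connected component (in a recurrent word any two factors each occur after an occurrence of the other, giving paths in both directions in the Rauzy graph). With this component analysis added --- disconnectedness at window length $20$ for (a), the strongly-connected-component structure at window length $78$ for (b) --- your outline becomes the paper's proof; without it, the passage from the finite search to an individual word is unjustified in (a) and false in (b).
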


\begin{proof}{\ }
\begin{enumerate}[label=(\alph*)]
\item We prove in Section~\ref{sec:mu} that $\mu({\bf p})$ is $\tfrac{28}{11}^+$-free.

We construct the set $S_{18}^{20}$ defined as follows:
a word $v$ is in $S_{18}^{20}$ if and only if there exists a $\tfrac{13}5$-free binary word $pvs$ containing
at most $18$ palindromes and such that $|p|=|v|=|s|=20$.
From $S_{18}^{20}$, we construct the Rauzy graph $R_{18}^{20}$ such that the vertices are the factors
of length 19 and the arcs are the factors of length 20.
We notice that $R_{18}^{20}$ is disconnected. It contains four connected components that are symmetric
with respect to reversal and bit complement. Let $C_{18}^{20}$ be the connected component which avoids the factor \texttt{1101}.
We check that $C_{18}^{20}$ is identical to the Rauzy graph of the factors of length 19 and 20 of $\mu(\bf p)$.

Now we consider a bi-infinite $\tfrac{13}5$-free binary word $\bf w$ with $18$ palindromes.
So $\bf w$ corresponds to a walk in one of the connected components of $R_{18}^{20}$,
say $C_{18}^{20}$ without loss of generality.
By the previous remark, $\bf w$ has the same set of factors of length 20 as $\mu(\bf p)$.
Since $\max\acc{|f|, f\in F_{18}}=19\le20$, $\bf w$ avoids every factor in $F_{18}$.
Moreover, $\bf w$ is cubefree since it is $\tfrac{13}5$-free.
By \Cref{lemmamu}, $\bf w$ has the same factor set as $\mu(\bf p)$.

Then the proof is complete by symmetry by reversal and bit complement.

\item We prove in Section~\ref{sec:nu} that $\nu({\bf p})$ is $\tfrac52^+$-free.

We construct the set $S_{20}^{78}$ defined as follows:
a word $v$ is in $S_{20}^{78}$ if and only if there exists a $\tfrac{28}{11}$-free binary word $pvs$ containing
at most $20$ palindromes and such that $|p|=|v|=|s|=78$.
From $S_{20}^{78}$, we construct the Rauzy graph $R_{20}^{78}$ such that the vertices are the factors
of length 77 and the arcs are the factors of length 78.
We notice that $R_{20}^{78}$ is not strongly connected.
It contains four strongly connected components that are symmetric with respect to reversal and bit complement.
Let $C_{20}^{78}$ be the strongly connected component which avoids the factor \texttt{1011}.
We check that $C_{20}^{78}$ is identical to the Rauzy graph of the factors of length 77 and 78 of $\nu({\bf p})$.

Now we consider a recurrent $\tfrac{28}{11}$-free binary word $\bf w$ with $20$ palindromes.
Since $\bf w$ is recurrent, $\bf w$ corresponds to a walk in one of the strongly connected
components of $R_{20}^{78}$, say $C_{20}^{78}$ without loss of generality.
By the previous remark, $\bf w$ has the same set of factors of length 78 as $\nu(\bf p)$.
Since $\max\acc{|f|, f\in F_{20}}=16\le78$, $\bf w$ avoids every factor in $F_{20}$.
Moreover, $\bf w$ is cubefree since it is $\tfrac{28}{11}$-free.
By \Cref{lemmanu}, $\bf w$ has the same factor set as $\nu({\bf p})$.

Then the proof is complete by symmetry by reversal and bit complement.

\end{enumerate}
\end{proof}







Notice that item (b) requires recurrent words rather than bi-infinite words.
That is because of, e.g., the bi-infinite word ${\bf x}=\nu({\bf p})^R\texttt{010110}\nu({\bf p})$.
Obviously $\nu({\bf p})$ and $\nu({\bf p})^R$ have the same set of 20 palindromes and it is easy
to check that ${\bf x}$ contains no additional palindrome.
We show that ${\bf x}$ is $\tfrac52^+$-free by checking the central factor
of ${\bf x}$ of length 200. Then larger repetitions of exponent $>\tfrac52$ are ruled out since
the word \texttt{110011001001101} is a prefix of $\texttt{110}\nu(\bf p)$ but is neither a factor of $\nu(\bf p)$ nor $\nu(\bf p)^R$.
By symmetry, this also holds for ${\bf x}^R=\nu({\bf \bf p})^R\texttt{011010}\nu({\bf p})$,
$\overline{\bf x}$, and $\overline{{\bf x}^R}$.
\section{The critical exponent of $\nu(\bf p)$ and $\mu(\bf p)$}\label{sec:mu_nu}
\bigskip

Before recalling the definition of the infinite words $\bf p$, $\nu(\bf p)$ and $\mu(\bf p)$, let us underline that all of them are uniformly recurrent and $\nu(\bf p)$ and $\mu(\bf p)$ are morphic images of $\bf p$. Hence in order to compute their (asymptotic) critical exponents, we will exploit the following two useful statements. See also~\cite{diploma2024}.

\begin{theorem}[\cite{DDP21}] \label{thm:formulaE}
Let $\bf{u}$ be a uniformly recurrent aperiodic infinite word. Let $(w_n)$ be a sequence of all bispecial factors ordered by their length.
For every $n\in\mathbb{N}$, let $r_n$ be a shortest return word to $w_n$ in $\bf{u}$. Then
\begin{equation}    
{E}(\mathbf{u}) = 1 + \sup\limits_{n \in \mathbb{N}} \left\{ \frac{|w_n|}{|r_n|} \right\}
\qquad \text{and} \qquad
{E}^*(\mathbf{u}) = 1 + \limsup\limits_{n \to +\infty} \frac{|w_n|}{|r_n|}\,.
\end{equation}
\end{theorem}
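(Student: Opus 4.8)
The plan is to prove both identities by setting up a tight correspondence between repetitions in $\mathbf{u}$ and its bispecial factors, and then reading off the supremum and the limit superior from it. Uniform recurrence guarantees that return words and two-sided extensions of factors are well defined. For the lower bound $E(\mathbf{u})\ge 1+\sup_n |w_n|/|r_n|$ I would fix a bispecial factor $w=w_n$ and a shortest return word $r=r_n$, and look at two consecutive occurrences of $w$ at positions $j<\ell$ with $\ell-j=|r|$. The factor $u_j\cdots u_{\ell+|w|-1}$ equals $rw$ and carries $w$ both as a prefix and as a suffix; comparing these two occurrences letter by letter shows that $rw$ has period $|r|$, hence $rw$ is a prefix of $r^\omega$. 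This exhibits the repetition $r^{1+|w|/|r|}$ and gives $E(\mathbf{u})\ge 1+|w|/|r|$. Since this holds for every $n$, the lower bound follows, and choosing a \emph{shortest} return word is exactly what maximizes the exponent for a fixed $w$.

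For the upper bound I would start from an arbitrary repetition $u^e$ and extend it to a maximal run $\tilde z$ of the same period $p=|u|$, so that $e\le\tilde e=|\tilde z|/p$. Setting $w$ to be the prefix of $\tilde z$ of length $|\tilde z|-p$, periodicity makes $w$ simultaneously a prefix and a suffix of $\tilde z$, and the period word $u$ is a return word to $w$; hence any shortest return word $r$ to $w$ satisfies $|r|\le p$. The decisive step is to check that $w$ is \emph{bispecial}: right-maximality of the run means the actual letter following $\tilde z$ differs from the period-continuing letter $\tilde z_{|w|}$, whereas the prefix occurrence of $w$ inside $\tilde z$ is followed by $\tilde z_{|w|}$, so $w$ has two distinct right extensions and is right special; the symmetric argument on the left, comparing the letter preceding the run with $\tilde z_{p-1}$, makes $w$ left special. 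Thus $w$ is some $w_m$, and $\tilde e=1+|w|/p\le 1+|w|/|r|\le 1+\sup_m |w_m|/|r_m|$, which yields $E(\mathbf{u})\le 1+\sup_m |w_m|/|r_m|$.

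The asymptotic identity would be proved by the same correspondence applied length by length. In the lower bound the bispecial factors satisfy $|w_n|\to\infty$, so when $E(\mathbf{u})<\infty$ the bounded ratios force $|r_n|\to\infty$, and the repetitions $r_n^{1+|w_n|/|r_n|}$ then have periods tending to infinity, feeding directly into the $\limsup$ defining $E^*$; in the upper bound, a repetition with large period and exponent bounded away from $1$ produces a bispecial factor of large length, so only finitely many period lengths can be exceptional for any prescribed exponent. I expect the main obstacle to be precisely the bispeciality step of the upper bound — turning the geometric maximality of a run into two genuinely distinct one-sided extensions of $w$ — together with the bookkeeping in the asymptotic statement needed to guarantee that repetitions with diverging periods correspond to bispecial factors with diverging lengths and conversely, so that the two $\limsup$'s coincide. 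The cases $E(\mathbf{u})=+\infty$ should be separated out but follow at once, since an unbounded exponent and an unbounded ratio $|w_n|/|r_n|$ are then equivalent.
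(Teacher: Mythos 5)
The paper itself contains no proof of this statement: it is imported verbatim, with attribution, from~\cite{DDP21}, so there is no internal argument to compare against. Your proposal is correct and is essentially the proof from that reference: the lower bound comes from the observation that $r_nw_n$ has period $|r_n|$ and hence is the repetition $r_n^{1+|w_n|/|r_n|}$, the upper bound from extending a repetition to a maximal run $\tilde z$ of the same period $p$, whose overlap $w$ (the prefix of length $|\tilde z|-p$) is right and left special exactly because maximality breaks the period on both sides, and the $E^*$ identity follows by the bookkeeping you describe (bounded ratios force $|r_n|\to\infty$ in one direction; exponent bounded away from $1$ forces $|w|\ge \delta p\to\infty$ in the other). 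Two details in your upper bound should be tightened, though neither is a real gap. First, the prefix $u$ of $\tilde z$ of length $p$ need not itself be a return word to $w$, since $w$ may occur inside $\tilde z$ strictly between positions $0$ and $p$; what is true, and all you use, is that two occurrences of $w$ at distance $p$ force the \emph{shortest} return word to satisfy $|r|\le p$. Second, the left-speciality argument needs a letter preceding the run, which fails when the maximal run is a prefix of $\mathbf{u}$; this is repaired by recurrence (take instead an occurrence, away from the origin, of a longest period-$p$ run), and the same hypotheses --- uniform recurrence plus aperiodicity --- are what guarantee that period-$p$ runs have bounded length at all, hence that maximal runs exist; this finiteness claim is used implicitly and deserves a sentence.
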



\begin{theorem}\label{thm:E*_morphic_image} Let ${\bf u}$ be an infinite word over an alphabet $\mathcal A$ such that the uniform letter frequencies in ${\bf u}$ exist. Let $\psi:{\mathcal A}^* \to {\mathcal B}^*$ be an injective morphism and let $L \in \mathbb N$ be such that every factor $v$ of $\psi({\bf u})$, $|v|\geq L$, has a~synchronization point. 
Then $E^*({{\bf u}})=E^*(\psi({{\bf u}}))$.
\end{theorem}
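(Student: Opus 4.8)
The plan is to prove the two inequalities $E^*(\psi(\mathbf u))\ge E^*(\mathbf u)$ and $E^*(\psi(\mathbf u))\le E^*(\mathbf u)$ separately. Throughout I set $\theta=\sum_{\tt i\in\mathcal A} f_{\tt i}\abs{\psi(\tt i)}$; since $\psi$ is injective it is non-erasing (an erased letter would collide with $\varepsilon$), so $1\le\abs{\psi(\tt i)}\le\kappa$ with $\kappa=\max_{\tt i}\abs{\psi(\tt i)}$ and $\theta\ge1$. The basic analytic input is that the existence of uniform letter frequencies is equivalent to the uniform estimate $\abs{\psi(z)}=(\theta+o(1))\abs{z}$ as $\abs{z}\to\infty$, the $o(1)$ being uniform over all factors $z$ of $\mathbf u$; this is what lets me translate ratios of $\psi$-lengths into ratios of lengths. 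If $E(\mathbf u)=+\infty$, then $\psi$ carries repetitions of unbounded exponent to repetitions of unbounded exponent and the pull-back below gives the converse, so both sides are $+\infty$; hence I may assume both critical exponents are finite.

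For $E^*(\psi(\mathbf u))\ge E^*(\mathbf u)$ I would take, along a subsequence realising the $\limsup$ in $E^*(\mathbf u)$, period words $z_n$ with $\abs{z_n}=n\to\infty$ and factors $z_n^{e_n}\in\LL(\mathbf u)$ with $e_n\to E^*(\mathbf u)$. Applying $\psi$ gives the factor $\psi(z_n)^{e_n}$ of $\psi(\mathbf u)$, a prefix of $\psi(z_n)^\omega$ of exponent $\abs{\psi(z_n^{e_n})}/\abs{\psi(z_n)}$. Both $z_n$ and $z_n^{e_n}$ are long, so the uniform estimate turns this exponent into $e_n(1+o(1))\to E^*(\mathbf u)$, while the period length $\abs{\psi(z_n)}\ge n\to\infty$. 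These repetitions therefore feed the $\limsup$ defining $E^*(\psi(\mathbf u))$, giving the inequality. This direction uses neither injectivity nor synchronisation.

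The converse is the heart of the matter and is where the hypotheses on $\psi$ enter. I would start from period words $w_k$ with $\abs{w_k}=q_k\to\infty$ and factors $W_k=w_k^{e_k}$ of $\psi(\mathbf u)$ with $e_k\to E^*(\psi(\mathbf u))$, and pull them back. The one elementary fact I need is that a synchronisation point of a factor lifts to every larger factor containing it, which is immediate from the definition. Writing an occurrence $\psi(v)=pW_ks$ with $v\in\LL(\mathbf u)$ and $\abs p,\abs s<\kappa$, and cutting at a synchronisation point of the length-$L$ prefix and of the length-$L$ suffix of $W_k$, injectivity yields a factor $t_k$ of $v$ with $\psi(t_k)=M_k$, where $M_k$ is the central part of $W_k$; thus $\abs{M_k}\ge e_kq_k-2L$ and $M_k$ still has period $q_k$. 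Fixing a window $O$ of length $L$ occurring at every multiple of $q_k$ inside $M_k$ and lifting its synchronisation point to all these occurrences, I obtain synchronisation points of $M_k$ spaced exactly $q_k$ apart; by injectivity the blocks they cut out of $t_k$ are all equal to one word $\pi_k$ with $\abs{\psi(\pi_k)}=q_k$, so $t_k$ contains $\pi_k^{c_k}$ with $c_k=\floor{(\abs{M_k}-L)/q_k}$ and $\abs{\pi_k}\ge q_k/\kappa\to\infty$.

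The delicate point, and the step I expect to be the main obstacle, is that $c_k\approx\floor{e_k}$ recovers only the integer part of the exponent, whereas the fractional part (worth up to $1$) must not be lost. To recover it I would show that the tail of $t_k$ beyond $\pi_k^{c_k}$ aligns with a prefix of $\pi_k^\omega$ up to a remainder whose length is bounded independently of $k$: the tail follows a synchronisation point, its image is a prefix of $\psi(\pi_k)^\omega$, and applying the lifting fact at each of the periodically spaced interior synchronisation points (gaps $\le L$) forces the tail to agree letter-by-letter with $\pi_k^\omega$ except for a final block of length $O(L+\kappa)$. Dropping the equally bounded head, $\mathbf u$ then contains a prefix of $\pi_k^\omega$ of length $\abs{t_k}-O(L)$, i.e.\ a repetition of period $\abs{\pi_k}\to\infty$ and exponent $(\abs{t_k}-O(L))/\abs{\pi_k}$. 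Since $\pi_k$ and $t_k$ are long, the uniform estimate converts this exponent into $(\abs{M_k}/q_k)(1+o(1))\ge(e_k-2L/q_k)(1+o(1))\to E^*(\psi(\mathbf u))$. Hence $E^*(\mathbf u)\ge E^*(\psi(\mathbf u))$, and together with the first inequality this yields $E^*(\mathbf u)=E^*(\psi(\mathbf u))$.
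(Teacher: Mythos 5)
Your proposal is correct, but in the crucial direction $E^*(\psi({\bf u}))\le E^*({\bf u})$ it takes a genuinely different route from the paper's proof. The shared core is the same: pull a near-extremal repetition of $\psi({\bf u})$ back through $\psi$ using synchronization points and injectivity, then convert image-length ratios into length ratios via uniform letter frequencies (your ``uniform estimate'' is precisely the paper's identity $\lim_{n\to\infty}|\psi(s_n)|/|s_n|={\vec 1}^{T}M_\psi\vec f$). But the decompositions differ. The paper writes $w_n=v_n^k u_n$, cuts at the first and last synchronization points of the period $v_n$ and of $w_n$, pulls back the conjugated period $t_n$ defined by $\psi(t_n)=\psi(v_n')z_n x_n$, and then distinguishes two cases according to whether the fractional part $|u_n|$ stays bounded (then $k\ge2$ for large $n$ and the tail is negligible in the limit) or is unbounded along a subsequence (then $u_n$ is long enough to have its own synchronization points and, by injectivity, pulls back to a prefix $u_n'$ of $v_n'$). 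You instead trim the repetition to a central part $M_k=\psi(t_k)$ and manufacture synchronization points spaced \emph{exactly} one period apart, by lifting the synchronization point of a single length-$L$ window that recurs at every period---a use of periodicity the paper does not exploit---so that injectivity yields an exact integer power $\pi_k^{c_k}$ in ${\bf u}$; the fractional part is then restored by your tail-alignment step, with no case distinction. That alignment step is the only place where your argument is a sketch rather than a proof, and it is the crux; it does go through: lift the last synchronization point of the tail's image $\psi(\tau_k)$ to its two occurrences in $M_k$ (at the very end, and one period earlier), deduce by injectivity that $\tau_k$ equals a prefix $\sigma$ of $\pi_k\tau_k$ followed by a block of length at most $L$, and invoke the standard fact that a word $\sigma$ which is a prefix of $\pi_k\sigma$ is a prefix of $\pi_k^\omega$; this makes $\pi_k^{c_k}\sigma$ a factor of ${\bf u}$ that is a prefix of $\pi_k^\omega$, after which your frequency computation finishes the argument. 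In exchange for this extra lemma you get a uniform argument (no bounded/unbounded dichotomy), and since you also prove the easy inequality $E^*(\psi({\bf u}))\ge E^*({\bf u})$ directly rather than citing \cite{DvPe2023} as the paper does, your proof is fully self-contained.
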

\begin{proof}
The inequality $E^*(\psi({{\bf u}}))\geq E^*({{\bf u}})$ is proven in~\cite{DvPe2023} for any non-erasing morphism under the assumption of existence of uniform letter frequencies in ${\bf u}$.
Let us prove the opposite inequality. According to the definition of $E^*(\psi({\bf u}))$, there exist sequences $\bigl(w_{n}\bigr)$ and $\bigl(v_{n}\bigr)$ such that 
\begin{enumerate}
\item $\lim\limits_{n\to \infty} |v_{n}| = \infty$; 
\item $w_{n}$ is a factor of $\psi({\bf u})$ for each $n \in \mathbb N$; 
\item $w_{n}$ is a prefix of the periodic word $\bigl(v_{n}\bigr)^\omega$ for each $n \in \mathbb N$; 
\item $E^*(\psi({{\bf u}}))=\lim\limits_{n\to \infty}\frac{|w_{n}|}{|v_{n}|}$.
\end{enumerate}
If $E^*(\psi({{\bf u}}))=1$, then, clearly, $E^*(\psi({{\bf u}}))\leq E^*({{\bf u}})$.
Assume in the sequel that $E^*(\psi({{\bf u}})) > 1$, then we have for large enough $n$ that $|w_{n}|>|v_{n}|$ and moreover, by the first item, $|v_n|\geq L$. By assumption, both $v_n$ and $w_n$ have synchronization points and since $v_n$ is a prefix of $w_n$ for large enough $n$, we may write $$w_{n}=x_{n}\bullet\psi(w'_{n})\bullet y_{n}\quad  \text{and} \quad v_{n}=x_{n}\bullet \psi(v'_{n})\bullet z_{n}\,,$$ where we highlighted the first and the last synchronization point (not necessarily distinct) in $w_n$ and $v_n$ and where $w'_{n}$ and $v'_{n}$ are uniquely given factors of ${\bf u}$ and the lengths of $x_{n}$, $y_{n}$, $z_{n}$ are smaller than $L$.  

\medskip
By the third item, we have $$w_{n}=v_{n}^ku_n=(x_{n}\psi(v'_{n})z_{n})^k u_{n}\,,$$ 
where $u_{n}$ is a proper prefix of $v_{n}$ and $k\in \mathbb N, k\geq 1$.

There are two possible cases for $(u_{n})$.
\begin{enumerate}
\item[(a)] Either $(|u_{n}|)$ is bounded, but as $E^*(\psi({{\bf u}})) > 1$, it follows that $k\geq 2$ for large enough $n$. 
\item[(b)] Or there is a subsequence $(u_{j_n})$ of $(u_{n})$ such that for all $n \in \mathbb N$ we have $|u_{j_n}|\geq L$. Then by assumption, $u_{j_n}$ has a~synchronization point and we may write $u_{j_n}=x_{j_n}\bullet\psi(u'_{j_n})\bullet y_{j_n}$, where we highlighted the first and the last synchronization point in $u_{j_n}$ and $u'_{j_n}$ is a prefix of $v'_{j_n}$ by injectivity of $\psi$. 
\end{enumerate}
\begin{enumerate}
\item[(a)]  In the first case, since $k \geq 2$ for large enough $n$, the factor $w_{n}$ starts with $(x_{n}\psi(v'_{n})z_{n})^2$. By definition of synchronization points and injectivity of $\psi$, there exists a~unique factor $t_{n}$ of ${\bf u}$ such that $\psi(v'_{n})z_{n}x_{n}=\psi(t_{n})$. Consequently, $w_{n}=(x_{n}\psi(v'_{n})z_{n})^k u_{n}=x_n\psi(t_n^{k-1}v'_n)z_n u_n$. Therefore, $t_n^{k-1}v'_n$ is a factor of ${\bf u}$ and it is a prefix of $(t_n)^{\omega}$ and $$E^*(\psi({{\bf u}}))=\lim\limits_{n\to \infty}\frac{|w_{n}|}{|v_{n}|}=\lim\limits_{n\to \infty}\frac{|x_{n}\psi({t_{n}}^{k-1}v'_n) z_{n}u_{n}|}{|\psi(t_{n})|}=\lim\limits_{n\to \infty}\frac{|\psi({t_{n}}^{k-1}v'_n)|}{|\psi(t_{n})|}\,,$$
where the last equality holds thanks to boundedness of $(|x_n|), (|z_n|)$ and $(|u_n|)$. 
\item[(b)] In the second case, $w_{j_n}=(x_{j_n}\psi(v'_{j_n})z_{j_n})^k x_{j_n}\psi(u'_{j_n})y_{j_n}$, where $k\geq 1$. By definition of synchronization points and injectivity of $\psi$, there exists a~unique factor $t_{j_n}$ of ${\bf u}$ such that $\psi(v'_{j_n})z_{j_n}x_{j_n}=\psi(t_{j_n})$. Consequently, $(t_{j_n})^{k}u'_{j_n}$ is a factor of ${\bf u}$ and it is a prefix of $(t_{j_n})^{\omega}$ and $$E^*(\psi({{\bf u}}))=\lim\limits_{n\to \infty}\frac{|w_{n}|}{|v_{n}|}=\lim\limits_{n\to \infty}\frac{|x_{j_n}\psi\bigl(({t_{j_n}})^k u'_{j_n}\bigr)y_{j_n}|}{|\psi(t_{j_n})|}=\lim\limits_{n\to \infty}\frac{|\psi\bigl(({t_{j_n}})^k u'_{j_n}\bigr)|}{|\psi(t_{j_n})|}\,,$$ 
where the last equality holds thanks to boundedness of $(x_n)$ and $(y_n)$.
\end{enumerate}

\medskip
Combining two simple facts: 
\begin{itemize}
    \item $\frac{|\psi(u)|}{|u|}={\vec 1 \hspace{0.01cm}}^{T} M_\psi \frac{\vec u}{|u|}$ for each word $u$ over $\mathcal A$, where $\vec 1$ is a vector with all coordinates equal to one;
    \item for each sequence $(s_n)$ of factors of ${\bf u}$ with $\lim_{n \to \infty} |s_n|=\infty$ we have, by uniform letter frequencies in ${\bf u}$, $\lim\limits_{n\to \infty}\frac{\vec{s_{n}}}{|s_{n}|}= \vec f\,$, where $\vec f$ is the vector of letter frequencies in ${\bf u}$,
    \end{itemize}
    we obtain
\begin{equation}\label{eq:psi}
\lim_{n \to \infty}\frac{|\psi(s_{n})|}{|s_{n}|} ={\vec 1 \hspace{0.01cm}}^{T} M_\psi \vec f \,.
\end{equation}
Consequently, 
\begin{enumerate}
\item[(a)]
in the first case, since $\lim_{n \to \infty} |t_n|=\infty$, we obtain using~\eqref{eq:psi}

$$\begin{array}{rcl}
E^*(\psi({{\bf u}}))&=&\lim\limits_{n\to \infty}\frac{|\psi({t_{n}}^{k-1}v'_n)|}{|\psi(t_{n})|}\\
&=&\lim\limits_{n\to \infty}\frac{|\psi({t_{n}}^{k-1}v'_n)|}{|{t_{n}}^{k-1}v'_n|}\frac{|t_n|}{|\psi(t_{n})|}\frac{|{t_{n}}^{k-1}v'_n|}{|t_n|}\\
&=&\lim\limits_{n\to \infty}\frac{|{t_{n}}^{k-1}v'_n|}{|t_n|}\leq E^*({\bf u})\,,
\end{array}$$
where the last inequality follows from the fact that 
${(t_{n})}^{k-1} v'_{n}\in {\mathcal L}({\bf u})$ and ${(t_{n})}^{k-1} v'_{n}$ is a power of $t_{n}$;

\item[(b)] in the second case, since $\lim |t_{j_n}|=\infty$, we obtain using~\eqref{eq:psi}
$$\begin{array}{rcl}
E^*(\psi({{\bf u}}))&=&\lim\limits_{n\to \infty}\frac{|\psi\bigr({(t_{j_n})}^k u'_{j_n}\bigl)|}{|\psi(t_{j_n})|}\\
&=&\lim\limits_{n\to \infty}\frac{|\psi\bigl({(t_{j_n}})^k u'_{j_n}\bigr)|}{|{(t_{j_n}})^k u'_{j_n}|}\frac{|t_{j_n}|}{|\psi(t_{j_n})|}\frac{|({t_{j_n}})^k u'_{j_n}|}{|t_{j_n}|}\\
&=&\lim\limits_{n\to \infty}\frac{|({t_{j_n}})^k u'_{j_n}|}{|t_{j_n}|}\leq  E^*({\bf u})\,,

\end{array}
$$
where the last inequality follows from the fact that ${(t_{j_n})}^k u'_{j_n}\in {\mathcal L}({\bf u})$ and ${(t_{j_n})}^k u'_{j_n}$ is a power of $t_{j_n}$.
\end{enumerate}
\end{proof}

\subsection{The infinite word $\bf p$}\label{sec:p}
In order to compute the critical exponent of morphic images of $\bf p$, it is essential to describe bispecial factors and their return words in $\bf p$ and to determine the asymptotic critical exponent of $\bf p$. 

\medskip

The infinite word $\bf p$ is the fixed point of the injective morphism $\varphi$, where
\begin{align*}
    \varphi(\texttt{0}) &= \texttt{01},\\
    \varphi(\texttt{1}) &= \texttt{21},\\
    \varphi(\texttt{2}) &= \texttt{0}.  
\end{align*}
Therefore, $\bf p$ has the following prefix
$${\bf p} = \texttt{01210210102101210102101210210121010} \cdots$$
\begin{remark}
It is readily seen that each non-empty factor of $\bf p$ has a~synchronization point.
\end{remark}

The following characteristics of $\bf p$ are known~\cite{CORS2022}:
\begin{itemize}
    \item The factor complexity of $\bf p$ is $C(n) = 2n + 1$.
    \item The word $\bf p$ is not closed under reversal: $\texttt{02} \in \LL(\bf p)$, but $\texttt{20} \notin \LL(\bf p)$.
    \item The word $\bf p$ is uniformly recurrent and $\bf p$ has uniform letter frequencies because $\varphi$ is primitive.
\end{itemize}

\subsubsection{Bispecial factors in $\bf p$}
First, we will examine LS factors.
    Using the form of $\varphi$, we observe
    \begin{itemize}
        \item \texttt{0} has only one left extension: \texttt{1},
        \item \texttt{1} has two left extensions: \texttt{0} and \texttt{2},
        \item \texttt{2} has two left extensions: \texttt{0} and \texttt{1}.
    \end{itemize}
    Therefore, every LS factor has left extensions either $\{\tt 0,2\}$, or $\{\tt 0 ,1\}$. 
 \begin{lemma}\label{lem:LSp}
    Let $w \neq \varepsilon$, $w \in \LL(\bf p)$. 
    \begin{itemize}
        \item If $w$ is a LS factor such that ${\tt{0}}w, {\tt{1}} w \in \LL(\bf p)$, then ${\tt 1}\varphi(w)$ is a LS factor such that ${\tt 01}\varphi(w), {\tt 21 } \varphi(w) \in \LL(\bf p)$.
        \item If $w$ is a LS factor such that ${\tt 0} w, {\tt 2}w \in \LL(\bf p)$, then $\varphi(w)$ is a LS factor such that ${\tt 0}\varphi(w), {\tt 1}\varphi(w) \in \LL(\bf p)$.
    \end{itemize}
\end{lemma}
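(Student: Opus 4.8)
The plan is to rely on a single structural fact: since $\bf p$ is a fixed point of $\varphi$, the morphism preserves the language, i.e.\ $u\in\LL(\bf p)$ implies $\varphi(u)\in\LL(\bf p)$ (because $\varphi(u)$ is a factor of $\varphi(\bf p)=\bf p$). Both items will then follow by applying $\varphi$ to the two left-extended words supplied by the hypothesis and inspecting, in each resulting image, the boundary between the prefix block $\varphi(\texttt{i})$ and $\varphi(w)$, using $\varphi(\texttt{0})=\texttt{01}$, $\varphi(\texttt{1})=\texttt{21}$, $\varphi(\texttt{2})=\texttt{0}$.

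For the second item I would start from $\texttt{0}w,\texttt{2}w\in\LL(\bf p)$ and apply $\varphi$, obtaining $\varphi(\texttt{0}w)=\texttt{01}\varphi(w)$ and $\varphi(\texttt{2}w)=\texttt{0}\varphi(w)$, both in $\LL(\bf p)$. The first image displays $\texttt{1}$ as a left extension of $\varphi(w)$ and the second displays $\texttt{0}$; as these letters differ, $\varphi(w)$ is LS, and both $\texttt{0}\varphi(w)$ and $\texttt{1}\varphi(w)$ lie in $\LL(\bf p)$, which is exactly the claim.

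For the first item I would instead start from $\texttt{0}w,\texttt{1}w\in\LL(\bf p)$, so that $\varphi(\texttt{0}w)=\texttt{01}\varphi(w)$ and $\varphi(\texttt{1}w)=\texttt{21}\varphi(w)$ are both factors. Here the key observation is that $\varphi(\texttt{0})$ and $\varphi(\texttt{1})$ share the final letter $\texttt{1}$, so the right object to track is $\texttt{1}\varphi(w)$: the two images exhibit it preceded by $\texttt{0}$ and by $\texttt{2}$ respectively, making $\texttt{1}\varphi(w)$ a LS factor with $\texttt{01}\varphi(w),\texttt{21}\varphi(w)\in\LL(\bf p)$.

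I do not expect a genuine obstacle in this lemma; the only care required is the bookkeeping of the terminal letters of the blocks $\varphi(\texttt{i})$, and in particular the remark that the common suffix $\texttt{1}$ of $\varphi(\texttt{0})$ and $\varphi(\texttt{1})$ is precisely what forces the lifted LS factor to be $\texttt{1}\varphi(w)$ rather than $\varphi(w)$ in the first item. Note also that synchronization points (the preceding Remark) are not needed for this direction, since we only push factors forward through $\varphi$ rather than taking preimages.
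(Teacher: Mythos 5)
Your proposal is correct and is essentially the paper's own argument: the paper's proof is just the remark that the claim ``follows from the form of $\varphi$ and the fact that $\bf p$ is the fixed point of $\varphi$, i.e., if $u \in \LL(\bf p)$, then $\varphi(u)\in \LL(\bf p)$,'' which is precisely the structural fact you isolate and then spell out case by case. Your explicit bookkeeping (including the observation that the common suffix $\texttt{1}$ of $\varphi(\texttt{0})$ and $\varphi(\texttt{1})$ is what forces the lifted LS factor to be $\texttt{1}\varphi(w)$) is a faithful, more detailed rendering of the same proof.
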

\begin{proof}
    It follows from the form of $\varphi$ and the fact that $\bf p$ is the fixed point of the morphism $\varphi$, i.e., if $u \in \LL(\bf p)$, then $\varphi(u)\in \LL(\bf p)$.
\end{proof}


Second, we will focus on RS factors.
    We observe
    \begin{itemize}
        \item {\tt 0} has two right extensions: {\tt 1} and {\tt 2},
        \item {\tt 1} has two right extensions: {\tt 0} and {\tt 2},
        \item {\tt 2} has only one right extension: {\tt 1}.
    \end{itemize}
    Therefore, every RS factor has right extensions either $\{\tt 1,2\}$, or $\{\tt 0,2\}$. 
Using similar arguments as for LS factors, we get the following statement.
\begin{lemma}\label{lem:RSp}
    Let $w \neq \varepsilon$, $w \in \LL(\bf p)$. 
    \begin{itemize}
        \item If $w$ is a RS factor such that $w{\tt 0}, w{\tt 2} \in \LL(\bf p)$, then $\varphi(w){\tt 0}$ is a RS factor such that $\varphi(w) {\tt 01}, \varphi(w) {\tt 02} \in \LL(\bf p)$.
        \item If $w$ is a RS factor such that $w{\tt 1}, w{\tt 2} \in \LL(\bf p)$, then $\varphi(w)$ is a RS factor such that $\varphi(w){\tt 2}, \varphi(w){\tt 0} \in \LL(\bf p)$.
    \end{itemize}
\end{lemma}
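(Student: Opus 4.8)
The plan is to mirror the proof of \Cref{lem:LSp}: the only tool needed is that $\bf p$ is a fixed point of $\varphi$, so that $u\in\LL(\bf p)$ implies $\varphi(u)\in\LL(\bf p)$, together with the right-extension data for single letters recorded just above the statement. I would handle the two items separately, and I expect the second to be immediate while the first carries the only real subtlety.

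For the second item, assume $w{\tt 1},w{\tt 2}\in\LL(\bf p)$. Applying $\varphi$ gives $\varphi(w{\tt 1})=\varphi(w){\tt 21}\in\LL(\bf p)$ and $\varphi(w{\tt 2})=\varphi(w){\tt 0}\in\LL(\bf p)$, exhibiting ${\tt 2}$ and ${\tt 0}$ as right extensions of $\varphi(w)$. Hence $\varphi(w)$ is RS with $\varphi(w){\tt 2},\varphi(w){\tt 0}\in\LL(\bf p)$, as wanted.

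For the first item, assume $w{\tt 0},w{\tt 2}\in\LL(\bf p)$. From $w{\tt 0}$ we immediately get $\varphi(w{\tt 0})=\varphi(w){\tt 01}\in\LL(\bf p)$, which provides the right extension ${\tt 1}$ of $\varphi(w){\tt 0}$. The obstacle is that $\varphi({\tt 2})={\tt 0}$ has length one, so $\varphi(w{\tt 2})=\varphi(w){\tt 0}$ only certifies that $\varphi(w){\tt 0}$ is a factor and reveals nothing about what follows its terminal ${\tt 0}$; thus it does not by itself yield a second right extension, in contrast with the length-two images that sufficed for the LS lemma. The remedy is to read one further letter of $w$: since ${\tt 2}$ has ${\tt 1}$ as its unique right extension in $\bf p$, the factor $w{\tt 2}$ must extend to $w{\tt 21}\in\LL(\bf p)$, whence $\varphi(w{\tt 21})=\varphi(w){\tt 021}\in\LL(\bf p)$ and so $\varphi(w){\tt 02}\in\LL(\bf p)$, giving the second right extension ${\tt 2}$. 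Combining the two, $\varphi(w){\tt 0}$ is RS with both $\varphi(w){\tt 01}$ and $\varphi(w){\tt 02}$ in $\LL(\bf p)$, which completes the argument. (One could note in passing that no other right extension is possible, as the right extensions of the letter ${\tt 0}$ are exactly $\{{\tt 1},{\tt 2}\}$, but this is not needed for the statement.)
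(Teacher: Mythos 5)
Your proof is correct and follows essentially the same route as the paper, which gives no explicit argument for this lemma beyond invoking ``similar arguments as for LS factors,'' i.e., the fixed-point property of $\varphi$ together with the single-letter extension data. In fact, your treatment of the first item is more careful than the paper's terse reference: you correctly identify that $\varphi(\texttt{2})=\texttt{0}$ having length one forces you to extend $w\texttt{2}$ to $w\texttt{21}$ (using that $\texttt{1}$ is the unique right extension of $\texttt{2}$) before applying $\varphi$, which is exactly the subtlety hidden in the paper's appeal to analogy.
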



It follows from the form of LS and RS factors that we have at most 4 possible kinds of non-empty BS factors in $\bf p$. 

\begin{proposition}\label{prop:BS}
    Let  $v$ be a non-empty BS factor in $\bf p$.
    \begin{enumerate}
        \item\label{prop:BSA} ${\tt 0}v,{\tt 2}v, v{\tt 0}, v{\tt 2} \in \LL(\bf p)$ if and only if there exists $w\in \LL(\bf p)$ such that $v = {\tt 1}\varphi(w)$ and ${\tt 0}w,{\tt 1}w, w{\tt 1},w{\tt 2} \in \LL(\bf p)$.
        \item\label{prop:BSB} ${\tt 0}v,{\tt 1}v, v{\tt 1}, v{\tt 2} \in \LL(\bf p)$ if and only if there exists $w\in \LL(\bf p)$ such that $v = \varphi(w) {\tt 0}$ and ${\tt 0}w,{\tt 2}w, w{\tt 0},w{\tt 2} \in \LL(\bf p)$.
        \item\label{prop:BSC} ${\tt 0}v,{\tt 2}v, v{\tt 1}, v{\tt 2} \in \LL(\bf p)$ if and only if there exists $w\in \LL(\bf p)$ such that $v = {\tt 1}\varphi(w) {\tt 0}$ and ${\tt 0}w,{\tt 1}w, w{\tt 0},w{\tt 2} \in \LL(\bf p)$.  
        \item\label{prop:BSD} ${\tt 0}v,{\tt 1}v, v{\tt 0}, v{\tt 2} \in \LL(\bf p)$ if and only if there exists $w\in \LL(\bf p)$ such that $v = \varphi(w)$ and ${\tt 0}w,{\tt 2}w, w{\tt 1},w{\tt 2} \in \LL(\bf p)$.  
    \end{enumerate}
\end{proposition}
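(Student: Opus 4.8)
The plan is to obtain each of the four equivalences by superposing \cref{lem:LSp} (which governs the left side, hence a possible prefix letter \texttt1 of $v$) with \cref{lem:RSp} (which governs the right side, hence a possible suffix letter \texttt0 of $v$), both applied to one common preimage $w$. Indeed, the discussion preceding \cref{lem:LSp,lem:RSp} shows that a nonempty BS factor has left extensions $\{\texttt0,\texttt2\}$ or $\{\texttt0,\texttt1\}$ and right extensions $\{\texttt1,\texttt2\}$ or $\{\texttt0,\texttt2\}$, so the four items are exactly the four combinations of a left type with a right type. In each item the left type selects the prefix form ($\texttt1\varphi(w)$ when the left extensions are $\{\texttt0,\texttt2\}$, and $\varphi(w)$ when they are $\{\texttt0,\texttt1\}$), while the right type selects the suffix form ($\varphi(w)\texttt0$ when the right extensions are $\{\texttt1,\texttt2\}$, and $\varphi(w)$ when they are $\{\texttt0,\texttt2\}$); since both forms share the central block $\varphi(w)$, they glue into the single word claimed in the statement.

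For the implication ``$\Leftarrow$'' I start from a factor $w$ with the stated bi-extensions, which is then itself BS. The relevant bullet of \cref{lem:LSp} gives that the prefix form of $v$ is LS with the claimed left extensions, and the relevant bullet of \cref{lem:RSp} gives that $\varphi(w)$ (resp.\ $\varphi(w)\texttt0$) is RS with the claimed right extensions. The only genuine point is to transfer specialness across the extra boundary letters, e.g.\ from $\varphi(w)$ to $\texttt1\varphi(w)$ in items \ref{prop:BSA} and \ref{prop:BSC}. I would do this by producing both right extensions inside $\varphi$-images: for item \ref{prop:BSA}, the occurrence of $w\texttt2$ is preceded by some $x\in\{\texttt0,\texttt1\}$ (the left extensions of $w$), and since $\varphi(x)$ ends in \texttt1 the word $\varphi(xw\texttt2)=\varphi(x)\varphi(w)\texttt0$ contains $\texttt1\varphi(w)\texttt0$; likewise $\varphi(xw\texttt1)=\varphi(x)\varphi(w)\texttt{21}$ contains $\texttt1\varphi(w)\texttt2$. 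Hence $v=\texttt1\varphi(w)$ is RS with right extensions $\{\texttt0,\texttt2\}$, and is therefore BS of the announced type. The symmetric transfer of left-specialness across a trailing \texttt0 (items \ref{prop:BSB},\ref{prop:BSC}) and the remaining items are analogous.

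For the implication ``$\Rightarrow$'' I start from a BS factor $v$ of a fixed type and recover $w$ as its $\varphi$-preimage. Here I rely on the Remark that every nonempty factor of $\bf p$ has a synchronization point, together with the injectivity of $\varphi$, to peel off one level of $\varphi$ and obtain a unique $w$. The first and last letters of $v$ are forced by its extension type: no LS factor of $\bf p$ begins with \texttt0 and no RS factor ends with \texttt2 (from the single-letter extensions computed above), and a short case analysis then shows that left extensions $\{\texttt0,\texttt2\}$ force $v$ to begin with \texttt1 while $\{\texttt0,\texttt1\}$ force it to begin with \texttt2, and dually for the last letter; this yields precisely the prefix/suffix form of the corresponding item. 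Reading each of the words $\texttt0v,\texttt1v,\texttt2v,v\texttt0,v\texttt1,v\texttt2$ witnessing the extensions of $v$ as a $\varphi$-image (e.g.\ $\texttt{01}\varphi(w)=\varphi(\texttt0w)$ and $\texttt{21}\varphi(w)=\varphi(\texttt1w)$ in item \ref{prop:BSA}) and applying synchronization gives the corresponding extensions of $w$; the classification of the possible extension sets then pins down the bi-extension of $w$ exactly.

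I expect the main obstacle to be exactly this bookkeeping around the boundary letters: proving that the left-derived and the right-derived descriptions of $v$ refer to the \emph{same} preimage $w$ (well-definedness of $w$, which is where synchronization is indispensable), and carrying specialness across the extra \texttt1 or \texttt0 without gaining or losing an extension. The very short instances---above all $w=\varepsilon$, giving the shortest nonempty BS factor $v=\texttt1$ in item \ref{prop:BSA}---should be verified by hand, since there the prefix and suffix adjustments overlap and the generic gluing argument degenerates.
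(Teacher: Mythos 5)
Your proposal is correct and follows essentially the same route as the paper's proof: the $(\Leftarrow)$ direction via Lemmas~\ref{lem:LSp} and~\ref{lem:RSp}, and the $(\Rightarrow)$ direction by forcing the first/last letter of $v$ from its extension type, peeling off one level of $\varphi$ using synchronization points and injectivity, and then pinning down the extensions of $w$ through the classification of possible extension sets. If anything, you are more careful than the paper, which dismisses $(\Leftarrow)$ as immediate from the two lemmas, whereas you explicitly supply the needed transfer of right-specialness across the prepended letter \texttt{1} (via the images $\varphi(x w\texttt{2})$ and $\varphi(x w\texttt{1})$ of extended occurrences) and flag the degenerate case $w=\varepsilon$.
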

\begin{proof}
The implication $(\Leftarrow)$ follows from Lemmata~\ref{lem:LSp} and~\ref{lem:RSp}.
We will prove the opposite implication for Item~\ref{prop:BSA}, the other cases may be proven analogously. If $v$ is a non-empty factor such that ${\tt 0}v,{\tt 2}v, v{\tt 0}, v{\tt 2} \in \LL(\bf p)$, then $v$ necessarily starts and ends with the letter ${\tt 1}$. By the form of $\varphi$, we have the following synchronization points $v={ \tt 1}\bullet \hat v\bullet$ ($\hat v$ may be empty). Hence, by injectivity of $\varphi$, there exists a unique $w$
in $\bf p$ such that $v={\tt 1}\varphi(w)$. Thus, using again the form of $\varphi$ and the knowledge of possible right extensions, the factor $w$ is BS and ${\tt 0}w,{\tt 1}w, w{\tt 1},w{\tt 2} \in \LL(\bf p)$.
\end{proof}
We can see that the only BS factor of length one is ${\tt 1}$, it has left extensions ${\tt 0,2}$ and right extensions ${\tt 0,2}$. Applying Proposition~\ref{prop:BS} Item~\ref{prop:BSB}, we obtain that $\varphi({\tt 1}){\tt 0}$ is BS with left extensions ${\tt 0,1}$ and right extensions ${\tt 1,2}$.
Proposition~\ref{prop:BS} Item~\ref{prop:BSA} gives us that ${\tt 1}\varphi^2({\tt 1})\varphi({\tt 0})$ is BS with left extensions ${\tt 0,2}$ and right extensions ${\tt 0,2}$. This process can be iterated providing us with infinitely many BS factors:
\begin{equation}\label{eq:branch1}
\begin{array}{l}
{\tt 1} \to \varphi({\tt 1}){\tt 0} \to {\tt 1}\varphi^2({\tt 1})\varphi({\tt 0}) \to \varphi({\tt 1})\varphi^3({\tt 1})\varphi^2({\tt 0}){\tt 0} \to \\ \to {\tt 1}\varphi^2({\tt 1})\varphi^4({\tt 1})\varphi^3(\texttt{{\tt 0}})\varphi({\tt 0})  \to \varphi({\tt 1})\varphi^3({\tt 1})\varphi^5({\tt 1})\varphi^4({\tt 0})\varphi^2({\tt 0}){\tt 0} \quad \dots 
\end{array}
\end{equation}

The only BS factor of length two is ${\tt 10}$, it has left extensions ${\tt 0,2}$ and right extensions ${\tt 1,2}$. Applying Proposition~\ref{prop:BS} Item~\ref{prop:BSD}, we obtain that $\varphi({\tt 1})\varphi({\tt 0})$ is BS with left extensions ${\tt 0,1}$ and right extensions ${\tt 0,2}$.
Proposition~\ref{prop:BS} Item~\ref{prop:BSC} gives us that ${\tt 1}\varphi^2({\tt 1})\varphi^2({\tt 0}) {\tt 0}$ is BS with left extensions ${\tt 0,2}$ and right extensions ${\tt 1,2}$. This process can be iterated providing us again with infinitely many BS factors: 
\begin{equation}\label{eq:branch2}
\begin{array}{l}
{\tt 10} \to \varphi({\tt 1})\varphi({\tt 0}) \rightarrow {\tt 1}\varphi^2({\tt 1})\varphi^2({\tt 0}) {\tt 0} \to \varphi({\tt 1})\varphi^3({\tt 1})\varphi^3({\tt 0})\varphi({\tt 0}) \to \\
\to {\tt 1}\varphi^2({\tt 1})\varphi^4({\tt 1})\varphi^4({\tt 0})\varphi^2({\tt 0}){\tt 0} \to \varphi({\tt 1})\varphi^3({\tt 1})\varphi^5({\tt 1})\varphi^5({\tt 0})\varphi^3({\tt 0})\varphi({\tt 0}) \quad \dots
\end{array}
\end{equation}

Each BS factor $v$ of length greater than two has at least two synchronization points and the corresponding BS factor $w$ from Proposition~\ref{prop:BS} is non-empty. In other words, the BS factor $v$ makes part of one of the sequences~\eqref{eq:branch1} and~\eqref{eq:branch2} of BS factors.

As a consequence of Proposition~\ref{prop:BS} and the above arguments, we get a~complete description of BS factors in $\bf p$.
\begin{corollary}\label{coro:BS}
    Let $w$ be a non-empty BS factor in $\bf p$. Then it has one of the following forms:
    \begin{itemize}
        \item[$(\mathcal{A})$]\label{BS:A} $$w_A^{(n)} = {\tt 1}\varphi^2({\tt 1})\varphi^4({\tt 1})\cdots \varphi^{2n}({\tt 1})\varphi^{2n-1}({\tt 0})\varphi^{2n-3}({\tt 0})\cdots \varphi({\tt 0})$$ for $n \geq 1$. 
        If $n = 0$, then we set $w_A^{(0)} = {\tt 1}$.

        The Parikh vector of $w_A^{(n)}$ is the same as of the word ${\tt 1}\varphi({\tt 012})\varphi^3({\tt 012})\dots \varphi^{2n-1}({\tt 012})$.
        
        \item[$(\mathcal{B})$]\label{BS:B} $$w_B^{(n)} = \varphi({\tt 1})\varphi^3({\tt 1})\cdots \varphi^{2n+1}({\tt 1})\varphi^{2n}({\tt 0})\varphi^{2n-2}({\tt 0})\cdots \varphi^2({\tt 0}){\tt 0}$$ for $n \geq 0$. 

        The Parikh vector of $w_B^{(n)}$ is the same as of the word ${\tt 012}\varphi^2({\tt 012})\varphi^4({\tt 012})\dots \varphi^{2n}({\tt 012})$.

        \item[$(\mathcal{C})$]\label{BS:C} $$w_C^{(n)} = {\tt 1}\varphi^2({\tt 1})\varphi^4({\tt 1})\cdots \varphi^{2n}({\tt 1})\varphi^{2n}({\tt 0})\varphi^{2n-2}({\tt 0})\cdots \varphi^2({\tt 0}){\tt 0}$$ for $n \geq 0$.
        
        The Parikh vector of $w_C^{(n)}$ is the same as of the word ${\tt 01}\varphi^2({\tt 01})\varphi^4({\tt 01})\dots \varphi^{2n}({\tt 01})$.

        \item[$(\mathcal{D})$]\label{BS:D} $$w_D^{(n)} = \varphi({\tt 1})\varphi^3({\tt 1})\cdots \varphi^{2n+1}({\tt 1})\varphi^{2n+1}({\tt 0})\varphi^{2n-1}({\tt 0})\cdots \varphi({\tt 0})$$ for $n \geq 0$.        

        The Parikh vector of $w_D^{(n)}$ is the same as of the word $\varphi({\tt 01})\varphi^3({\tt 01})\dots \varphi^{2n+1}({\tt 01})$.
    \end{itemize}
\end{corollary}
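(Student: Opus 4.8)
The plan is to read Corollary~\ref{coro:BS} as the bookkeeping consequence of Proposition~\ref{prop:BS} together with the two base cases ${\tt 1}$ and ${\tt 10}$. First I classify each non-empty BS factor by its pair of extension sets: by the preceding analysis the left extensions are $\{{\tt 0},{\tt 2}\}$ or $\{{\tt 0},{\tt 1}\}$ and the right extensions are $\{{\tt 0},{\tt 2}\}$ or $\{{\tt 1},{\tt 2}\}$, giving exactly four types, which I label $\mathcal A,\mathcal B,\mathcal C,\mathcal D$ to match the four items of the proposition. Reading Proposition~\ref{prop:BS} as a statement about pre-images, each item sends a BS factor of a given type to its unique non-empty BS pre-image of a prescribed type; tracking the types shows that the pre-image operation splits into two disjoint $2$-cycles, $\mathcal A\leftrightarrow\mathcal B$ (Items~\ref{prop:BSA},~\ref{prop:BSB}) and $\mathcal C\leftrightarrow\mathcal D$ (Items~\ref{prop:BSC},~\ref{prop:BSD}).

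For exhaustiveness I would argue by descent on the length. Because each equivalence in Proposition~\ref{prop:BS} is an ``if and only if'', every BS factor $v$ of length greater than two admits a strictly shorter non-empty BS pre-image $w$ of the type dictated by its own type (the observation recorded just before the statement), from which $v$ is recovered by the corresponding map ${\tt 1}\varphi(w)$, $\varphi(w){\tt 0}$, ${\tt 1}\varphi(w){\tt 0}$, or $\varphi(w)$. Iterating this strictly shortens the factor, so it terminates at a BS factor of length at most two; a direct check of extensions shows these are exactly ${\tt 1}$ (type $\mathcal A$) and ${\tt 10}$ (type $\mathcal C$). Hence every BS factor is obtained from ${\tt 1}$ or ${\tt 10}$ by repeatedly applying the proposition, i.e.\ it lies in one of the two sequences~\eqref{eq:branch1} and~\eqref{eq:branch2}.

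Next I would extract the closed forms. Composing the two maps of each $2$-cycle yields the one-step recurrences
\begin{align*}
w_B^{(n)}&=\varphi\bigl(w_A^{(n)}\bigr){\tt 0}, & w_A^{(n+1)}&={\tt 1}\varphi\bigl(w_B^{(n)}\bigr)={\tt 1}\varphi^2\bigl(w_A^{(n)}\bigr)\varphi({\tt 0}),\\
w_D^{(n)}&=\varphi\bigl(w_C^{(n)}\bigr), & w_C^{(n+1)}&={\tt 1}\varphi\bigl(w_D^{(n)}\bigr){\tt 0}={\tt 1}\varphi^2\bigl(w_C^{(n)}\bigr){\tt 0},
\end{align*}
with base cases $w_A^{(0)}={\tt 1}$ and $w_C^{(0)}={\tt 10}$. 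A straightforward induction on $n$ then confirms the four displayed products: applying $\varphi^2$ raises every block $\varphi^{j}({\tt 1})$ and $\varphi^{j}({\tt 0})$ by two, while the outer prefix ${\tt 1}$ and the suffix $\varphi({\tt 0})$ (resp.\ ${\tt 0}$) supply the new lowest-order blocks, reproducing the level-$(n{+}1)$ formula; the forms of $w_B^{(n)}$ and $w_D^{(n)}$ then follow by a single application of $\varphi$ (with the extra ${\tt 0}$) to $w_A^{(n)}$ and $w_C^{(n)}$.

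Finally, the Parikh-vector statements follow from the single identity $\varphi({\tt 1})={\tt 21}$. Indeed $\varphi^{2k}({\tt 1})=\varphi^{2k-1}({\tt 21})$, so the pair $\varphi^{2k}({\tt 1})\varphi^{2k-1}({\tt 0})$ has the same Parikh vector as $\varphi^{2k-1}({\tt 021})$, hence as $\varphi^{2k-1}({\tt 012})$, since reordering preserves the Parikh vector and $\varphi^{2k-1}$ preserves Parikh equivalence. In families $\mathcal A$ and $\mathcal B$ the ${\tt 1}$-blocks sit one exponent above the ${\tt 0}$-blocks, so pairing them this way (with a leading ${\tt 1}$ left over for $\mathcal A$ and nothing left over for $\mathcal B$) reassembles exactly the claimed products of ${\tt 012}$; in families $\mathcal C$ and $\mathcal D$ the exponents already coincide, so pairing $\varphi^{j}({\tt 1})$ with $\varphi^{j}({\tt 0})$, which together share the Parikh vector of $\varphi^{j}({\tt 01})$, reassembles the claimed products of ${\tt 01}$. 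I expect the only genuine obstacle here to be organizational rather than mathematical: all the content sits in Proposition~\ref{prop:BS}, and the remaining work is to keep the index ranges of the four products synchronized with the base cases and with the increasing/decreasing order in which the ${\tt 1}$- and ${\tt 0}$-blocks are written, where a single off-by-one would corrupt a formula even though the underlying induction is immediate.
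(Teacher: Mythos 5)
Your proposal is correct and follows essentially the same route as the paper: the paper derives Corollary~\ref{coro:BS} from Proposition~\ref{prop:BS} together with the observation that the only BS factors of length at most two are ${\tt 1}$ and ${\tt 10}$, so that every longer BS factor descends (via its non-empty, strictly shorter pre-image) to one of these and hence lies on one of the two iteration branches~\eqref{eq:branch1} and~\eqref{eq:branch2}. Your write-up merely makes explicit what the paper leaves implicit --- the induction confirming the closed forms and the pairing $\varphi^{j+1}({\tt 1})\varphi^{j}({\tt 0})\sim\varphi^{j}({\tt 012})$, $\varphi^{j}({\tt 1})\varphi^{j}({\tt 0})\sim\varphi^{j}({\tt 01})$ for the Parikh vectors --- and both steps are carried out correctly.
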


\begin{lemma}\label{lem:ordinary}
    All BS factors in $\bf p$ are ordinary. 
\end{lemma}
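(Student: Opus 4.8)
The plan is to induct on the length of the bispecial (BS) factor, using \Cref{prop:BS} to reduce each non-empty BS factor to a strictly shorter one while showing that the reduction preserves the bilateral multiplicity $b(\cdot)$. First I record that every non-empty BS factor $v$ satisfies $\#L_{\bf p}(v)=\#R_{\bf p}(v)=2$: this follows from the admissible extension pairs established before \Cref{lem:LSp,lem:RSp} (every LS factor has left extensions $\{{\tt 0},{\tt 2}\}$ or $\{{\tt 0},{\tt 1}\}$, every RS factor has right extensions $\{{\tt 1},{\tt 2}\}$ or $\{{\tt 0},{\tt 2}\}$), and it is listed explicitly in the four cases of \Cref{prop:BS}. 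Hence $b(v)=\#B_{\bf p}(v)-3$, so proving $v$ ordinary is the same as proving $\#B_{\bf p}(v)=3$.

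The heart of the argument is the claim that in each of the four equivalences of \Cref{prop:BS} the bijection $v\leftrightarrow w$ induces a bijection $B_{\bf p}(v)\to B_{\bf p}(w)$, whence $b(v)=b(w)$. I would prove this via the synchronization dictionary coming from $\varphi$. Take case~\ref{prop:BSA}, where $v={\tt 1}\varphi(w)$. From $\varphi({\tt 0})={\tt 01}$, $\varphi({\tt 1})={\tt 21}$, $\varphi({\tt 2})={\tt 0}$ one reads off ${\tt 0}v=\varphi({\tt 0}w)$, ${\tt 2}v=\varphi({\tt 1}w)$, while $v{\tt 0}$ and $v{\tt 2}$ are the length-$(|v|{+}1)$ prefixes of $\varphi(w{\tt 2})$ and $\varphi(w{\tt 1})$. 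Because every factor of $\bf p$ has a synchronization point (the synchronization remark in \Cref{sec:p}) and $\varphi$ is injective, the desubstitution of any two-sided extension ${\tt i}v{\tt j}$ is forced, yielding the dictionary ${\tt 0}v{\tt 0}\leftrightarrow{\tt 0}w{\tt 2}$, ${\tt 0}v{\tt 2}\leftrightarrow{\tt 0}w{\tt 1}$, ${\tt 2}v{\tt 0}\leftrightarrow{\tt 1}w{\tt 2}$, ${\tt 2}v{\tt 2}\leftrightarrow{\tt 1}w{\tt 1}$. Thus ${\tt i}v{\tt j}\in\LL({\bf p})$ iff the corresponding ${\tt i}'w{\tt j}'\in\LL({\bf p})$, and these four pairs exhaust $\{{\tt 0},{\tt 1}\}\times\{{\tt 1},{\tt 2}\}$, the only possible bi-extensions of $w$; hence $\#B_{\bf p}(v)=\#B_{\bf p}(w)$. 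Cases~\ref{prop:BSB}, \ref{prop:BSC}, \ref{prop:BSD} are identical, only the boundary letters attached by $\varphi$ and the resulting letter-to-letter dictionary change.

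It remains to run the induction. The base cases are the two shortest BS factors ${\tt 1}$ and ${\tt 10}$, singled out in the discussion preceding \Cref{coro:BS} as the only BS factors of lengths $1$ and $2$; for each I verify $\#B_{\bf p}=3$ directly. For ${\tt 1}$ one checks ${\tt 010},{\tt 012},{\tt 210}\in\LL({\bf p})$ while ${\tt 212}\notin\LL({\bf p})$ (every ${\tt 2}$ occurs inside $\varphi({\tt 1})={\tt 21}$ and is therefore followed by ${\tt 1}$ and then by ${\tt 0}$), and for ${\tt 10}$ three of the four candidate bi-extensions occur while ${\tt 0101}$ is excluded since ${\tt 0101}\notin\LL({\bf p})$ by \Cref{lemmap}. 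By the remark following \eqref{eq:branch2}, every BS factor of length greater than two corresponds under \Cref{prop:BS} to a non-empty BS factor $w$, which is strictly shorter because $\varphi$ is length-increasing on words containing a ${\tt 0}$ or a ${\tt 1}$ (and a non-empty BS factor always does, since a lone ${\tt 2}$ is not special). The claim then gives $b(v)=b(w)=0$ by the induction hypothesis, so every BS factor is ordinary.

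I expect the one delicate point to be the synchronization bookkeeping in the claim: one must confirm that the forced desubstitution of ${\tt i}v{\tt j}$ lands on the asserted ${\tt i}'w{\tt j}'$ for both boundary letters \emph{simultaneously}, which is exactly where injectivity of $\varphi$ and the presence of synchronization points are jointly used. A less hands-on alternative would avoid the dictionary: since ${\mathcal C}_{\bf p}(n)=2n+1$, its first difference is the constant $2$, so the standard second-difference-of-complexity identity (Cassaigne) gives $\sum_{w\in\LL^n({\bf p}),\,w\ \mathrm{BS}}b(w)=0$ for every $n$; combined with the observation that the lengths of the factors described in \Cref{coro:BS} are pairwise distinct (so at most one BS factor occurs per length), this again forces $b(w)=0$. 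I would nonetheless keep the inductive argument as primary, since it is self-contained within the material of \Cref{sec:p}.
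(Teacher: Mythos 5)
Your proof is correct in substance, but it takes a genuinely different route from the paper's. The paper's proof invokes the explicit classification of \Cref{coro:BS} and, for each of the four families, simply lists the three bi-extensions that occur, concluding $b(w)=3-2-2+1=0$; the verification of those lists for arbitrarily long factors is left as ``easy to verify''. (In fact the lists printed there for families $\mathcal{B}$, $\mathcal{C}$, $\mathcal{D}$ are cyclically mislabelled --- e.g.\ the list given for $w_B^{(n)}$ uses the left extension ${\tt 2}$, which type-$\mathcal{B}$ factors do not admit, and actually belongs to $w_C^{(n)}$ --- though the count of three, hence the conclusion, is unaffected.) You instead prove a transfer principle: the desubstitution correspondence of \Cref{prop:BS} carries the four candidate bi-extensions of $v$ bijectively onto those of $w$, preserving membership in $\LL({\bf p})$, so $b(v)=b(w)$, and you anchor a strong induction at the two short BS factors ${\tt 1}$ and ${\tt 10}$. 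This buys a self-contained argument with no unverified claims about long factors, at the cost of the synchronization bookkeeping you rightly flag as delicate: for instance, in the first case of \Cref{prop:BS} the trailing ${\tt 0}$ of ${\tt 0}v{\tt 0}$ desubstitutes a priori to ${\tt 0}w{\tt a}$ with ${\tt a}\in\{{\tt 0},{\tt 2}\}$ (both $\varphi({\tt 0})$ and $\varphi({\tt 2})$ begin with ${\tt 0}$), and you must use that $w{\tt 1},w{\tt 2}\in\LL({\bf p})$ forces $w{\tt 0}\notin\LL({\bf p})$ (right-extension sets in ${\bf p}$ have size at most two) to land on ${\tt 0}w{\tt 2}$; this does go through. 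Two points to patch: (i) the lemma also covers the empty word, which your induction never reaches --- add the paper's one-line check $b(\varepsilon)=5-3-3+1=0$ from the five length-two factors ${\tt 10},{\tt 01},{\tt 02},{\tt 12},{\tt 21}$; (ii) your Cassaigne-identity alternative relies on the unproved assertion that distinct BS factors of ${\bf p}$ have distinct lengths, so it should indeed remain secondary, as you propose (for the base cases, both exclusions ${\tt 212}$ and ${\tt 0101}$ are cleanest cited from \Cref{lemmap}).
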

\begin{proof}
The empty word is ordinary because all factors of length two are $\tt 10,01,02,12,21$. Thus $b(\varepsilon) = 5 - 3- 3 +1 = 0$.
It is easy to verify that each non-empty BS factor $w$ has three extensions. In particular,
\begin{itemize}
\item extensions of $w=w_A^{(n)}$ are: ${\tt 0}w{\tt 2}, {\tt 2}w{\tt 0}, {\tt 0}w{\tt 0}$,
\item extensions of $w=w_B^{(n)}$ are: ${\tt 2}w{\tt 2}, {\tt 2}w{\tt 1}, {\tt 0}w{\tt 2}$,
\item extensions of $w=w_C^{(n)}$ are: ${\tt 0}w{\tt 0}, {\tt 0}w{\tt 2}, {\tt 1}w{\tt 0}$,
\item extensions of $w=w_D^{(n)}$ are: ${\tt 1}w{\tt 2}, {\tt 0}w{\tt 1}, {\tt 1}w{\tt 1}$. 
\end{itemize}
Consequently, $b(w) = 3 - 2 - 2 + 1 = 0$.

\end{proof}

\subsubsection{The shortest return words to bispecial factors in $\bf p$}\label{sec:shortest_retwords}
Each factor of $\bf p$ has 3 return words. This claim follows from the next theorem.
\begin{theorem}[Theorem 5.7 in~\cite{BaPeSt2008}] Let $\bf{u}$ be a uniformly recurrent infinite word. Then each factor of $\bf{u}$ has exactly 3 return words if and only if $C(n) = 2n+1$ and $\bf{u}$ has no weak BS factors.
\end{theorem}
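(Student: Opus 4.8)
The claim is an equivalence, so the plan is to prove the two implications separately, working throughout in the Rauzy graphs $\Gamma_\ell$ of ${\bf u}$. First I would dispose of the degenerate case: a periodic uniformly recurrent word has bounded complexity, and all its long factors have a single return word, so both sides of the equivalence fail; hence I may assume ${\bf u}$ aperiodic, which together with uniform recurrence makes every $\Gamma_\ell$ strongly connected. Next I record two bookkeeping identities obtained by summing out-degrees over the vertices of $\Gamma_\ell$: $C(\ell+1)-C(\ell)=\sum_{|w|=\ell}\paren{\#R_{{\bf u}}(w)-1}$ and, differencing once more, $C(\ell+2)-2C(\ell+1)+C(\ell)=\sum_{|w|=\ell}b(w)$, with $b(w)$ the defect from the preliminaries. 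From these, $C(n)=2n+1$ is equivalent to $C(1)=3$ together with the statement that the total right-branching $\sum_{|w|=\ell}\paren{\#R_{{\bf u}}(w)-1}$ equals $2$ at every length; and when $C(n)=2n+1$ holds, the second identity forces $\sum_{|w|=\ell}b(w)=0$, so that the hypothesis ``no weak BS factors'' (which forbids $b(w)<0$) upgrades to ``every bispecial factor is ordinary'' ($b(w)=0$). Thus condition (ii) is equivalent to: the first Betti number $C(\ell+1)-C(\ell)+1$ of each $\Gamma_\ell$ equals $3$ and every bispecial factor is ordinary.

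The backbone of the argument is the dictionary between return words and first-return paths: the return words to a factor $w$ are in bijection with the walks in $\Gamma_{|w|}$ that start and end at the vertex $w$ without meeting $w$ internally. The crucial reduction is a \emph{propagation lemma}: if $w$ has a unique right extension $wa$, then every occurrence of $w$ is immediately followed by $a$, so the occurrences of $w$ and of $wa$ coincide and the two factors have the same number of return words; symmetrically, the number of return words to $w$ and to $bw$ agree when $b$ is the unique left extension. Iterating, the return-word count of an arbitrary factor is inherited from the bispecial factors of ${\bf u}$, so it suffices to understand the count at bispecial vertices in terms of the local branching data $\paren{\#L_{{\bf u}},\#R_{{\bf u}},b}$.

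For the implication (ii)$\Rightarrow$(i) I would contract every vertex of in- and out-degree $1$ to obtain the reduced graph, whose vertices are the special factors and whose first Betti number is still $3$; using branching $2$ and ordinariness -- which says that the left and right extensions of a bispecial factor combine in the unique minimal, tree-like pattern -- one checks that in the only admissible shapes (one vertex of out-degree $3$, or two vertices of out-degree $2$ with the matching left structure) the number of first-return paths to any vertex is exactly $3$. For the converse (i)$\Rightarrow$(ii) I would argue contrapositively: the return words to $\varepsilon$ are exactly the letters, so a constant count $3$ already gives $\#\mathcal A=C(1)=3$; and if some length had total right-branching different from $2$, or if some bispecial factor were weak, then via the propagation lemma one exhibits a factor whose number of first-return paths is $\ne 3$. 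The main obstacle is precisely this local-to-global step: quantifying how the number of first-return paths through a bispecial vertex depends on $\paren{\#L_{{\bf u}},\#R_{{\bf u}},b}$, and in particular showing that a weak bispecial factor strictly perturbs the count away from $3$ while an ordinary one under branching $2$ preserves it. Checking that the walk-counting in $\Gamma_{|w|}$ -- where a first-return path may legitimately revisit vertices other than $w$ -- is faithfully captured by the finite reduced graph is the delicate part; the complexity identities together with the ordinariness constraint are what make the bispecial structure rigid enough for this bookkeeping to close.
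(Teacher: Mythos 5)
First, a point of reference: the paper does not prove this statement at all. It is imported verbatim, with citation, as Theorem 5.7 of Balkov\'a--Pelantov\'a--Steiner~\cite{BaPeSt2008}, and is used as a black box to conclude that every factor of ${\bf p}$ has exactly $3$ return words. So there is no internal proof to compare your attempt against; it has to stand on its own, and it does not.

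The genuine gap is your ``backbone'' dictionary. Return words to $w$ are \emph{not} in bijection with the first-return walks at the vertex $w$ in the Rauzy graph $\Gamma_{|w|}$: the map from return words to such walks is injective, but a walk with two or more edges in $\Gamma_{|w|}$ need not spell a factor of ${\bf u}$, since a Rauzy graph of order $n$ only encodes the factors of length $n+1$ faithfully. Worse, whenever the punctured graph $\Gamma_{|w|}\setminus\{w\}$ contains a directed cycle, there are \emph{infinitely many} first-return walks at $w$, while uniform recurrence forces the number of return words to be finite. This situation is unavoidable here: it occurs at every non-special vertex lying on only one ``petal'' of the graph, and it even occurs at bispecial vertices in one of the two shapes you call admissible --- take the dumbbell-shaped reduced graph with a loop at each of the two special vertices joined by two connecting paths (first Betti number $3$, out-degrees $2$ and $2$); a first-return walk at one special vertex may wind around the loop at the other special vertex arbitrarily often, so the walk count is infinite, yet the theorem asserts the return-word count is $3$. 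Hence the two key steps of your plan --- ``one checks that in the only admissible shapes the number of first-return paths to any vertex is exactly $3$'' for (ii)$\Rightarrow$(i), and exhibiting a vertex with $\neq 3$ first-return paths for (i)$\Rightarrow$(ii) --- count the wrong quantity and do not even yield finite numbers. Your preparatory material is correct: the complexity identities, the dismissal of the periodic case, and the propagation of return-word counts across unique extensions (this is exactly Observation~\ref{obs:retwordsLSandRS} of the paper) are all fine, but they only reduce the problem to bispecial factors. The heart of the theorem is precisely the step you flag as ``delicate'' and then skip: determining which walks are \emph{realized} as factors, i.e., converting the local extension data $\paren{\#L_{{\bf u}}(w),\#R_{{\bf u}}(w),b(w)}$ into an exact count of actual return words. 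That realizability analysis is the substance of the proof in~\cite{BaPeSt2008}, and without it neither implication goes through.
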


Let us first comment on return words to the shortest BS factors -- observe the prefix of $\bf p$ at the beginning of this section.
\begin{itemize}
    \item The return words to $\varepsilon$ are ${\tt 0,1,2}$. 
    \item The return words to ${\tt 1}$ are ${\tt 12,102,10}$.
    \item The return words to $\varphi({\tt 1}){\tt 0}$ are ${\tt 210} = \varphi({\tt 1}){\tt 0}$, ${\tt 21010}$, ${\tt 2101}$. The shortest one is ${\tt 210}$ and it is a prefix of all of them. 
    \item The return words to ${\tt 10}$ are ${\tt 10}$, ${\tt 102}$, ${\tt 1012}$. The shortest one is ${\tt 10}$ and it is a prefix of all of them. 
\end{itemize}

\begin{lemma}\label{lem:images_retwords}
If $w$ is a non-empty BS factor of $\bf p$ and $v$ is a return word to $w$, then $\varphi(v)$ is a return word to $\varphi(w)$.  
\end{lemma}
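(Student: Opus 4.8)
The plan is to show that $\varphi$ induces a bijection between the occurrences of $w$ in ${\bf p}$ and the occurrences of $\varphi(w)$ in ${\bf p}$, and that this bijection sends consecutive occurrences to consecutive occurrences; the claim about return words then follows immediately. Write ${\bf p}=p_0p_1p_2\cdots$ and consider the induced factorization ${\bf p}=\varphi(p_0)\varphi(p_1)\varphi(p_2)\cdots$ into $\varphi$-blocks; call a position of ${\bf p}$ a \emph{block boundary} if it is the start of some $\varphi(p_k)$. Each occurrence of $w$ at block index $k$ (that is, $w=p_k\cdots p_{k+|w|-1}$) yields an occurrence of $\varphi(w)$ at the block boundary $|\varphi(p_0\cdots p_{k-1})|$, and these are consecutive whenever the underlying occurrences of $w$ are. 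So the whole content is to prove the converse: \emph{every} occurrence of $\varphi(w)$ in ${\bf p}$ begins and ends at a block boundary, and hence descends to an occurrence of $w$.

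First I would record the elementary alignment facts forced by the shape of $\varphi$. Since $\varphi(\texttt{0})=\texttt{01}$, $\varphi(\texttt{1})=\texttt{21}$, $\varphi(\texttt{2})=\texttt{0}$, the letters $\texttt{0}$ and $\texttt{2}$ occur only in block-initial position, the letter $\texttt{1}$ occurs only in block-final position, and no block begins with $\texttt{1}$. By Corollary~\ref{coro:BS} every non-empty BS factor $w$ begins with $\texttt{1}$ or $\texttt{2}$ and ends with $\texttt{0}$ or $\texttt{1}$; consequently $\varphi(w)$ begins with $\varphi(\texttt{1})=\texttt{21}$ or $\varphi(\texttt{2})=\texttt{0}$, i.e. with $\texttt{2}$ or $\texttt{0}$, and $\varphi(w)$ ends with the final letter of $\varphi(\texttt{0})=\texttt{01}$ or $\varphi(\texttt{1})=\texttt{21}$, i.e. with $\texttt{1}$.

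Now take any occurrence of $\varphi(w)$ in ${\bf p}$. Its first letter is $\texttt{0}$ or $\texttt{2}$, so it sits in block-initial position and the occurrence begins at a block boundary. Reading blocks from that boundary, the occurrence cannot terminate in the interior of a block: a proper nonempty prefix of a block is either $\texttt{0}$ (prefix of $\texttt{01}$) or $\texttt{2}$ (prefix of $\texttt{21}$), whereas $\varphi(w)$ ends with $\texttt{1}$; equivalently, since $\texttt{1}$ is always block-final, the position right after the occurrence is again a block boundary. Hence the occurrence spans an integer number of consecutive complete blocks $\varphi(p_k)\cdots\varphi(p_{k+m-1})=\varphi(p_k\cdots p_{k+m-1})$, and by injectivity of $\varphi$ this preimage equals $w$. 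Thus every occurrence of $\varphi(w)$ comes from an occurrence of $w$, establishing the bijection.

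Finally I would conclude. Let $v$ be a return word to $w$, arising from consecutive occurrences of $w$ at block indices $k_1<k_2$ with $v=p_{k_1}\cdots p_{k_2-1}$. The associated occurrences of $\varphi(w)$ sit at the block boundaries $|\varphi(p_0\cdots p_{k_1-1})|$ and $|\varphi(p_0\cdots p_{k_2-1})|$, at distance $|\varphi(v)|$. By the bijection, any occurrence of $\varphi(w)$ strictly between them would force an occurrence of $w$ at a block index strictly between $k_1$ and $k_2$, contradicting that these occurrences of $w$ are consecutive. Hence the two occurrences of $\varphi(w)$ are consecutive and the corresponding return word is exactly $\varphi(p_{k_1}\cdots p_{k_2-1})=\varphi(v)$. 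The only delicate point is the recognizability step—that every occurrence of $\varphi(w)$ is block-aligned—which is handled by the start/end letter analysis above, and which could alternatively be derived from the synchronization point of $\varphi(w)$ guaranteed by the Remark in Section~\ref{sec:p}.
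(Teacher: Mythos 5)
Your proof is correct and rests on exactly the same key fact as the paper's: since $w$ begins with {\tt 1} or {\tt 2} and ends with {\tt 0} or {\tt 1}, the image $\varphi(w)$ begins with {\tt 0} or {\tt 2} and ends with {\tt 1}, so every occurrence of $\varphi(w)$ is aligned with the $\varphi$-block structure (the paper phrases this as the synchronization points $\bullet\varphi(w)\bullet$), and injectivity of $\varphi$ then pulls any unwanted occurrence of $\varphi(w)$ back to an unwanted occurrence of $w$. The only difference is packaging: the paper runs this argument locally inside the factor $\varphi(v)\varphi(w)=\varphi(vw)$ to exclude a middle occurrence, whereas you set up a global consecutiveness-preserving bijection between occurrences in ${\bf p}$, which is the same argument in slightly heavier clothing.
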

\begin{proof}
On one hand, since $vw$ contains $w$ as a prefix and as a suffix, $\varphi(v)\varphi(w)$ contains $\varphi(w)$ as a prefix and as a suffix, too. 
On the other hand, $w$ starts in ${\tt 1}$ or ${\tt 2}$ and ends in ${\tt 0}$ or ${\tt 1}$, thus $\varphi(w)$ starts in ${\tt 0}$ or ${\tt 2}$ and ends in ${\tt 1}$, therefore it has the following synchronization points $\bullet \varphi(w)\bullet$. Consequently,  $\varphi(v)\varphi(w)$ cannot contain $\varphi(w)$ somewhere in the middle because in such a case, by injectivity of $\varphi$, $vw$ would contain $w$ also somewhere in the middle. 
\end{proof}

The following observation is an immediate consequence of the definition of return words.
\begin{observation}\label{obs:retwordsLSandRS}
Let $w$ be a factor of $\bf p$ and let $v$ be its return word. 
If $w$ has a~unique right extension $a$, then $v$ is a~return word to $wa$, too. If $w$ has a~unique left extension $b$, then $bvb^{-1}$ is a return word to $bw$. In particular, the Parikh vectors of the corresponding return words are the same.  
\end{observation}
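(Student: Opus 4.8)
The plan is to unwind the definition of a return word directly, since both claims simply record how the starting positions of occurrences of $w$ shift when a forced letter is appended on the right or prepended on the left. Write ${\bf p}=p_0p_1p_2\cdots$, so that a return word to a factor $w$ is the block $p_j p_{j+1}\cdots p_{\ell-1}$ determined by two consecutive starting positions $j<\ell$ of $w$ in $\bf p$.

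For the right-extension claim I would argue that if $a$ is the unique right extension of $w$, then every occurrence of $w$ is immediately followed by $a$; hence $w$ starts at a position $j$ if and only if $wa$ starts at $j$. So $w$ and $wa$ have identical sets of starting positions, hence identical pairs of consecutive occurrences, and therefore identical return words. In particular, the return word $v$ to $w$ associated with the pair $j<\ell$ is verbatim the return word to $wa$ associated with the same pair, and the two Parikh vectors agree because it is literally the same word.

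For the left-extension claim I would track the shift by one. If $b$ is the unique left extension of $w$, then every occurrence of $w$ at a position $j\ge 1$ is immediately preceded by $b$, so $bw$ occurs exactly at the positions $j-1$ as $j$ ranges over the occurrences of $w$. Thus consecutive occurrences $j<\ell$ of $w$ correspond to consecutive occurrences $j-1<\ell-1$ of $bw$, whose return word is $p_{j-1}p_j\cdots p_{\ell-2}$. Writing $v=p_j\cdots p_{\ell-1}$ and using that $p_{j-1}=p_{\ell-1}=b$, since both occurrences of $w$ are forced to be preceded by $b$, this return word is exactly $bvb^{-1}$. As we have inserted one letter $b$ in front and deleted one letter $b$ at the end, the Parikh vector is unchanged, which gives the last sentence.

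The only point requiring genuine care — and the one I would spell out explicitly — is the index bookkeeping at the edge of the word: the identities $p_{j-1}=b$ and $p_{\ell-1}=b$ need $j\ge 1$ and $\ell\ge 1$, and uniform recurrence of $\bf p$ guarantees infinitely many occurrences of $w$, so consecutive pairs $j<\ell$ with $j\ge 1$ always exist and the leftmost occurrence causes no trouble. Beyond this matching of positions there is no real obstacle; once the occurrences of $w$ and of $wa$ (respectively $bw$) are aligned, both assertions are immediate from the definition of return word.
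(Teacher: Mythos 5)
Your argument is correct and is essentially the paper's own: the paper gives no proof, remarking only that the observation ``is an immediate consequence of the definition of return words,'' which is exactly the definitional unwinding of occurrences that you carry out. One nuance in your edge-case discussion: to guarantee that the \emph{given} return word $v$ arises from a pair of consecutive occurrences with $j\ge 1$, you should apply recurrence to the factor $vw$ (each of its occurrences yields a pair of consecutive occurrences of $w$ realizing $v$), since the existence of infinitely many occurrences of $w$ gives pairs with $j\ge 1$ but not, by itself, a pair realizing this particular $v$.
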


\begin{example}\label{ex} Consider the BS factor ${\tt 10}$ with the shortest return word ${\tt 10}$ (being a prefix of the other two return words), then by Lemma~\ref{lem:images_retwords} the BS factor $\varphi({\tt 1})\varphi({\tt 0})$ has the shortest return word equal to $\varphi({\tt 10})$. By Lemma~\ref{lem:images_retwords}, the factor $\varphi^2({\tt 1})\varphi^2({\tt 0})$ has the shortest return word equal to $\varphi^2({\tt 10})$ and by Observation~\ref{obs:retwordsLSandRS}, the shortest return word to the BS factor ${\tt 1}\varphi^2({\tt 1})\varphi^2({\tt 0}){\tt 0}$ has the same Parikh vector as $\varphi^2({\tt 10})$.  
\end{example}

Putting together Lemma~\ref{lem:images_retwords}, Observation~\ref{obs:retwordsLSandRS} and the knowledge of BS factors, we obtain the following statement about the shortest return words to BS factors in $\bf p$.
\begin{corollary}\label{coro:retwordsp} The shortest return words to BS factors in $\bf p$ have the following properties.

\begin{itemize}
        \item[$(\mathcal{A})$]\label{RW:A} The shortest return words to $w_A^{(n)}$ are
        \begin{itemize}
            \item[(i)] ${\tt 12}$ and ${\tt 10}$ for $n = 0$,
            \item[(ii)] $r_A^{(n)}$ with the same Parikh vector as $\varphi^{2n-1}({\tt 012})$ for $n\geq 1$.

        \end{itemize}

        \item[$(\mathcal{B})$]\label{RW:B} The shortest return word $r_B^{(n)}$ to $w_B^{(n)}$ has the same Parikh vector as $\varphi^{2n}({\tt 012})$. 

        \item[$(\mathcal{C})$]\label{RW:C}
            The shortest return word to $w_C^{(n)}$ is
            \begin{itemize}
                \item[(i)] ${\tt 10}$ for $n = 0$
                \item[(ii)] $r_C^{(n)}$ with the same Parikh vector as $\varphi^{2n}({\tt 01})$ for $n \geq 1$.

            \end{itemize}
            
        \item[$(\mathcal{D})$]\label{RW:D} 
        The shortest return word $r_D^{(n)}$ to $w_D^{(n)}$ has the same Parikh vector as $\varphi^{2n+1}({\tt 01})$.

    \end{itemize}
 \end{corollary}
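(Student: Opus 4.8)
The plan is to induct along the two chains of bispecial factors exhibited in \Cref{coro:BS}. Rewriting, via \Cref{prop:BS}, the transitions used to build \eqref{eq:branch1} and \eqref{eq:branch2}, the chains are $w_A^{(0)}\to w_B^{(0)}\to w_A^{(1)}\to w_B^{(1)}\to\cdots$ and $w_C^{(0)}\to w_D^{(0)}\to w_C^{(1)}\to w_D^{(1)}\to\cdots$, governed by the recurrences $w_B^{(n)}=\varphi(w_A^{(n)})\texttt{0}$, $w_A^{(n+1)}=\texttt{1}\varphi(w_B^{(n)})$, $w_D^{(n)}=\varphi(w_C^{(n)})$, and $w_C^{(n+1)}=\texttt{1}\varphi(w_D^{(n)})\texttt{0}$. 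The base cases are the return words to $\texttt{1}=w_A^{(0)}$, to $\texttt{210}=w_B^{(0)}$, and to $\texttt{10}=w_C^{(0)}$ listed just before the statement. I would carry as an induction hypothesis the \emph{prefix property}: the shortest return word is a prefix of the other two. It holds at $w_B^{(0)}$ (with $\texttt{210}$) and $w_C^{(0)}$ (with $\texttt{10}$), which is why the clean induction starts from those two factors; the factor $w_A^{(0)}=\texttt{1}$ is genuinely exceptional, having the two incomparable shortest return words $\texttt{12}$ and $\texttt{10}$, and it is handled separately as the $n=0$ case of family~$\mathcal A$.

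The inductive step is built from two moves. The first sends a bispecial factor $w$ to $\varphi(w)$: by \Cref{lem:images_retwords} the map $v\mapsto\varphi(v)$ carries return words of $w$ to return words of $\varphi(w)$, and since $\varphi$ is injective while every factor of $\bf p$ has exactly three return words (\Cref{lem:ordinary} together with the result of~\cite{BaPeSt2008}), this is a bijection between the three return words of $w$ and those of $\varphi(w)$. As $\varphi$ respects the prefix order, the prefix property is preserved and $\varphi$ sends the shortest return word of $w$ to the shortest one of $\varphi(w)$. The second move appends the unique one-sided extension: by \Cref{lem:LSp,lem:RSp}, $\varphi(w_A^{(n)})$ and $\varphi(w_D^{(n)})$ are always followed by $\texttt{0}$, while $\varphi(w_B^{(n)})$ and $\varphi(w_D^{(n)})$ are always preceded by $\texttt{1}$, so \Cref{obs:retwordsLSandRS} applies. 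A right extension leaves the return words unchanged, and a left extension by $\texttt{1}$ replaces each return word $v$ by its conjugate $\texttt{1}v\texttt{1}^{-1}$, preserving its length, its Parikh vector, and (using that every such return word ends in $\texttt{1}$) the prefix property. Composing the two moves realizes each arrow of the chains at the level of shortest return words, as in \Cref{ex}.

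It remains to track Parikh vectors, which is immediate: applying $\varphi$ multiplies the Parikh vector by $M_\varphi$, whereas the extension steps do not change it. Starting from $\texttt{012}$ on the first chain (the Parikh vector of $r_B^{(0)}=\texttt{210}$) and from $\texttt{01}$ on the second (that of $r_C^{(0)}=\texttt{10}$), one finds that $r_A^{(n)}$ has the same Parikh vector as $\varphi^{2n-1}(\texttt{012})$ and $r_B^{(n)}$ as $\varphi^{2n}(\texttt{012})$ along the first chain, while $r_C^{(n)}$ has the same Parikh vector as $\varphi^{2n}(\texttt{01})$ and $r_D^{(n)}$ as $\varphi^{2n+1}(\texttt{01})$ along the second, exactly as claimed. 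I expect the main obstacle to be the bookkeeping guaranteeing that the morphism transports the \emph{shortest} return word to the \emph{shortest} one: since $\varphi$ is not length-monotone in general, this rests entirely on maintaining the prefix property, and in particular on isolating $w_A^{(0)}$ as the single place where that property fails and two shortest return words coexist.
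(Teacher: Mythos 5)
Your proposal is correct and takes essentially the same approach as the paper: an induction whose engine is \Cref{lem:images_retwords} together with \Cref{obs:retwordsLSandRS}, carrying the prefix property (the shortest return word is a prefix of the other two) as the invariant, transporting Parikh vectors by $M_\varphi$, and isolating the exceptional factor $w_A^{(0)}$ with its two incomparable shortest return words. The differences are only organizational: you step along the interleaved chains $w_A^{(n)}\to w_B^{(n)}\to w_A^{(n+1)}$ and $w_C^{(n)}\to w_D^{(n)}\to w_C^{(n+1)}$ one application of $\varphi$ at a time and treat all four families uniformly, whereas the paper inducts within family $\mathcal{A}$ using $\varphi^2$ per step (deriving its base case from $w_B^{(0)}={\tt 210}$) and declares the other cases similar; you also make explicit the bijection between the three return words of $w$ and those of $\varphi(w)$, which the paper leaves implicit.
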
 
 \begin{proof}
We will prove case $(\mathcal{A})$. The other cases are similar. 
The shortest return words to $w_A^{(0)}=\tt 1$ are given at the beginning of Section~\ref{sec:shortest_retwords}. 
Let us proceed by induction on $n$. 
Consider the bispecial factor $w_A^{(1)} = {\tt 1}\varphi^2({\tt 1})\varphi({\tt 0})={\tt 1}\varphi(\varphi({\tt 1}){\tt 0})$. 
By description of the shortest return words to short bispecial factors, we know that $\tt 210$ is the shortest return word (moreover prefix of all other return words) to the bispecial factor $\varphi(\tt 1)\tt 0$. Using Lemma~\ref{lem:images_retwords}, $\varphi(\tt 210)$ is the shortest return word to the factor $\varphi^2(\tt 1)\varphi(\tt 0)$. By Observation~\ref{obs:retwordsLSandRS}, the Parikh vector of the shortest return word to $w_A^{(1)} = {\tt 1}\varphi^2({\tt 1})\varphi({\tt 0})$ is equal to the Parikh vector of $\varphi(\tt 210)$, hence also to the Parikh vector of $\varphi(\tt 012)$.
Assume for a fixed $n\geq 1$, the bispecial factor $w_A^{(n)} = {\tt 1}\varphi^2({\tt 1})\varphi^4({\tt 1})\cdots \varphi^{2n}({\tt 1})\varphi^{2n-1}({\tt 0})\varphi^{2n-3}({\tt 0})\cdots \varphi({\tt 0})$
has the shortest return word with the Parikh vector $\varphi^{2n-1}(\tt 012)$ and this return word is a prefix of all other return words.
By definition, $w_A^{(n+1)} = {\tt 1}\varphi^2(w_A^{(n)})\varphi({\tt 0})$. By Lemma~\ref{lem:images_retwords} and by induction assumption, the shortest return word to the factor $\varphi^2(w_A^{(n)})$ has the same Parikh vector as $\varphi^{2n+1}(\tt 012)$. Using Observation~\ref{obs:retwordsLSandRS}, we obtain that the shortest return word to $w_A^{(n+1)}$ has the same Parikh vector as the factor $\varphi^{2n+1}(\tt 012)$, too.
 \end{proof}

\subsubsection{The asymptotic critical exponent of $\bf p$}
Let us determine the asymptotic critical exponent of $\bf p$ using Theorem~\ref{thm:formulaE}.
We use the form of BS factors and their shortest return words determined above.
We get $E^*({\bf p})=1+\max\{A', B', C', D'\}$, where 
\begin{align*}
A' &= \limsup_{n\to\infty}\frac{|w_A^{(n)}|}{|{r}_A^{(n)}|} 
= \limsup_{n\to\infty} \frac{|{\tt 1}\varphi({\tt 012})\varphi^3({\tt 012})\dots \varphi^{2n-1}({\tt 012}))|}{|\varphi^{2n-1}({\tt 012})|}\,; \\
B' &= \limsup_{n\to\infty} \frac{|w_B^{(n)}|}{|{r}_B^{(n)}|} 
= \limsup_{n\to\infty} \frac{|{\tt 012}\varphi^2({\tt 012})\varphi^4({\tt 012})\dots \varphi^{2n}({\tt 012}))|}{|\varphi^{2n}({\tt 012})|}\,; \\
C' &= \limsup_{n\to\infty}\frac{|w_C^{(n)}|}{|{r}_C^{(n)}|}
= \limsup_{n\to\infty}\frac{|{\tt 01}\varphi^2({\tt 01})\varphi^4({\tt 01})\dots \varphi^{2n}({\tt 01})|}{|\varphi^{2n}({\tt 01})|}\,; \\
D' &= \limsup_{n\to\infty}\frac{|w_D^{(n)}|}{|{r}_D^{(n)}|}= \limsup_{n\to\infty}\frac{|\varphi({\tt 01})\varphi^3({\tt 01})\dots \varphi^{2n+1}({\tt 01})|}{|\varphi^{2n+1}({\tt 01})|}\,. 
\end{align*}
By the Hamilton-Cayley theorem, we have $M_\varphi^3-2M_\varphi^2+M_\varphi-I=0$. Consequently, for each $w \in \{{\tt 0,1,2}\}^*$, if we denote $\ell_n:=|\varphi^n(w)|=(1,1,1)M_{\varphi}^n \vec w$, then $\ell_n$ satisfies the recurrence relation $\ell_{n+3}-2\ell_{n+2}+\ell_{n+1}-\ell_n=0$.
Denote $\beta$ the largest root of the characteristic polynomial $t^3-2t^2+t-1$; $\beta \doteq 1.75488$.
By the Perron-Frobenius theorem, $\beta$ is strictly larger than the modulus of the other roots of the characteristic polynomial. We thus obtain:
$$\begin{array}{rcl}
A'&=&\lim\limits_{n \to \infty} \cfrac{\sum_{k=1}^n\beta^{2k-1}}{\beta^{2n-1}}=\frac{\beta^2}{\beta^2-1};\\
&&\\
B'=C'&=& \lim\limits_{n \to \infty} \cfrac{\sum_{k=0}^n\beta^{2k}}{\beta^{2n}}=\frac{\beta^2}{\beta^2-1};\\
&&\\
D'&=& \lim\limits_{n \to \infty} \cfrac{\sum_{k=0}^n\beta^{2k+1}}{\beta^{2n+1}}=\frac{\beta^2}{\beta^2-1}.\\
\end{array}$$
Consequently, $E^*({\bf p})=1+\frac{\beta^2}{\beta^2-1}\doteq 2.48$.

\subsection{The infinite word $\nu(\bf p)$}\label{sec:nu}
The morphism $\nu$ has the form:
\begin{align*}
    \nu({\tt 0}) &= {\tt 011},\\
    \nu({\tt 1}) &= {\tt 0},\\
    \nu({\tt 2}) &= {\tt 01}.  
\end{align*}
Therefore,
$$\nu({\bf p}) = {\tt 011001001101001100110100110010011} \cdots$$
and $\nu$ is injective.

\begin{remark}\label{rem:unique_preimage} The reader may easily check that any factor of $\nu(\bf p)$ of length at least two has a synchronization point.
\end{remark}
Using the above remark and Theorem~\ref{thm:E*_morphic_image}, we deduce that 
$$E^*(\nu({\bf p}))=E^*(\bf p).$$
\subsubsection{Bispecial factors in $\nu(\bf p)$}
\begin{lemma}\label{lem:nuBS}
    Let  $v \in \LL(\nu(\bf p))$ be a BS factor of length at least two.
    Then one of the items holds.
    \begin{enumerate}
    \item\label{theorem:BSAnu} There exists $w\in \LL(\bf p)$ such that $v = {\tt 1}\nu(w){\tt 01}$ and ${\tt 0}w,{\tt 2}w, w{\tt 0},w{\tt 2} \in \LL(\bf p)$.        
        \item\label{theorem:BSBnu} There exists $w\in \LL(\bf p)$ such that $v = \nu(w){\tt 0}$ and ${\tt 0}w,{\tt 1}w, w{\tt 1},w{\tt 2} \in \LL(\bf p)$.
        \item\label{theorem:BSCnu} There exists $w\in \LL(\bf p)$ such that $v = {\tt 1}\nu(w){\tt 0}$ and ${\tt 0}w,{\tt 2}w, w{\tt 1},w{\tt 2} \in \LL(\bf p)$.
            \item\label{theorem:BSDnu} There exists $w\in \LL(\bf p)$ such that $v = \nu(w){\tt 01}$ and ${\tt 0}w,{\tt 1}w, w{\tt 0},w{\tt 2} \in \LL(\bf p)$.  
        \end{enumerate}    
\end{lemma}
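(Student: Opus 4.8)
The plan is to desubstitute $v$ under $\nu$ and then translate the left/right specialness of $v$ into left/right extension conditions on its $\nu$-preimage $w$. The starting observation is that each of $\nu({\tt 0})={\tt 011}$, $\nu({\tt 1})={\tt 0}$, $\nu({\tt 2})={\tt 01}$ begins with ${\tt 0}$ while ${\tt 1}$ never begins a $\nu$-image; hence in the factorization of $\nu({\bf p})$ into $\nu$-images a block boundary sits immediately before every letter ${\tt 0}$. In particular each maximal run of ${\tt 1}$'s has length at most $2$, so ${\tt 111}\notin\LL(\nu({\bf p}))$. Since $v$ is bispecial over a binary alphabet, ${\tt 0}v,{\tt 1}v,v{\tt 0},v{\tt 1}\in\LL(\nu({\bf p}))$.

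First I would pin down the two boundary pieces of $v$. Let $x$ be the maximal leading run of ${\tt 1}$'s of $v$; because ${\tt 1}v$ is a factor and ${\tt 111}$ is not, $x\in\acc{\varepsilon,{\tt 1}}$. Symmetrically, because $v{\tt 1}$ is a factor the trailing run of ${\tt 1}$'s has length at most $1$, so $v$ ends in ${\tt 0}$ or ${\tt 01}$, and I set $y$ to be the suffix of $v$ starting at its last ${\tt 0}$, so $y\in\acc{{\tt 0},{\tt 01}}$ (a bispecial $v$ of length at least two must contain a ${\tt 0}$, since otherwise $v\in\acc{{\tt 1},{\tt 11}}$ is not left special). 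The portion of $v$ strictly between $x$ and $y$ begins and ends at block boundaries, hence is a concatenation of complete $\nu$-images; by injectivity of $\nu$ together with Remark~\ref{rem:unique_preimage} there is a unique $w\in\LL({\bf p})$ with $v=x\,\nu(w)\,y$. This yields exactly the four candidate forms indexed by $\acc{\varepsilon,{\tt 1}}\times\acc{{\tt 0},{\tt 01}}$, matching the four items.

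Next I would read off the extension conditions on $w$, separating the left analysis (governed by $x$) from the right analysis (governed by $y$). The letter preceding $v$ is the last letter of the preceding block. When $x={\tt 1}$, the extension ${\tt 1}v$ forces the preceding block to be $\nu({\tt 0})={\tt 011}$ and ${\tt 0}v$ forces it to be $\nu({\tt 2})={\tt 01}$, giving ${\tt 0}w,{\tt 2}w\in\LL({\bf p})$ directly; when $x=\varepsilon$, ${\tt 0}v$ (reading ${\tt 00}$) forces the preceding block to be $\nu({\tt 1})={\tt 0}$ and ${\tt 1}v$ (reading ${\tt 10}$) forces it to be $\nu({\tt 0})$ or $\nu({\tt 2})$, so $w$ is left extendable by ${\tt 1}$ and by some letter of $\acc{{\tt 0},{\tt 2}}$, whence the dichotomy of left special factors of ${\bf p}$ (left extensions $\acc{{\tt 0},{\tt 1}}$ or $\acc{{\tt 0},{\tt 2}}$) upgrades this to ${\tt 0}w,{\tt 1}w\in\LL({\bf p})$. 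The right analysis mirrors this using $y$: $y={\tt 01}$ gives $w{\tt 0},w{\tt 2}\in\LL({\bf p})$ directly (continuation ${\tt 011}=\nu({\tt 0})$ versus ${\tt 010}$ closing $\nu({\tt 2})$), while $y={\tt 0}$ gives, via the analogous dichotomy for right special factors of ${\bf p}$, $w{\tt 1},w{\tt 2}\in\LL({\bf p})$. Combining the left condition determined by $x$ with the right condition determined by $y$ produces exactly one of items~\ref{theorem:BSAnu}--\ref{theorem:BSDnu}.

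The step I expect to be most delicate is the bookkeeping in the $x=\varepsilon$ and $y={\tt 0}$ cases, where the boundary piece ${\tt 0}$ is ambiguous: it may be the complete image $\nu({\tt 1})$ or the first letter of $\nu({\tt 0})$ or $\nu({\tt 2})$, and it is precisely this ambiguity, resolved only by the letter extending $v$ outside its window, that makes $v$ special in two different ways. Here I must argue that the two extension letters of $v$ genuinely correspond to two distinct neighbours of $w$ in ${\bf p}$, and then invoke the fact that special factors of ${\bf p}$ have exactly two one-sided extensions to pass from ``$w$ is extendable by ${\tt 1}$ and by some letter of $\acc{{\tt 0},{\tt 2}}$'' to the precise extension set claimed. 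I would also dispose of the degenerate case $w=\varepsilon$ separately (it does occur, e.g.\ for $v={\tt 01}$ giving item~\ref{theorem:BSDnu} and $v={\tt 10}$ giving item~\ref{theorem:BSCnu}), since there every single-letter extendability condition on $\varepsilon$ holds trivially.
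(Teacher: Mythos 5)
Your proof is correct and takes essentially the same approach as the paper: the paper's entire proof is the one-line remark that the statement ``follows from Remark~\ref{rem:unique_preimage} and from the possible left and right extensions of factors in ${\bf p}$,'' which is exactly the argument you carry out in full --- desubstitution via the block boundaries before each ${\tt 0}$, followed by translating the extensions of $v$ into extensions of $w$ using the dichotomy for left/right special factors of ${\bf p}$. Your write-up simply supplies the details (including the genuinely needed dichotomy upgrade in the $x=\varepsilon$, $y={\tt 0}$ cases and the $w=\varepsilon$ degeneracy) that the paper leaves to the reader.
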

\begin{proof}
The statement follows from Remark~\ref{rem:unique_preimage} and from the possible left and right extensions of factors in $\bf p$.
\end{proof}

Combining Lemma~\ref{lem:nuBS} and Corollary~\ref{coro:BS}, we get a complete description of BS factors in $\nu(\bf p)$.
\begin{corollary}\label{coro:BS_nu}
    Let $v$ be a non-empty BS factor in $\nu(\bf p)$ of length at least two. Then $v= {\tt 01}$ or $v= {\tt 10}$ or $v$ has one of the following forms:
    \begin{itemize}
        \item[$(\mathcal{A})$]\label{BS:Anu} $$v_A^{(n)} = {\tt 1}\nu( {\tt 1}\varphi^2({\tt 1})\varphi^4({\tt 1})\cdots \varphi^{2n}({\tt 1})\varphi^{2n-1}({\tt 0})\varphi^{2n-3}({\tt 0})\cdots \varphi({\tt 0})){\tt 01}$$ for $n \geq 1$ and $v_A^{(0)}={\tt 1}\nu({\tt 1}){\tt 01}={\tt 1001}$. 
        
        The Parikh vector of $v_A^{(n)}$ is the same as of the word ${\tt 011}\nu({\tt 1}\varphi({\tt 012})\varphi^3({\tt 012})\dots \varphi^{2n-1}({\tt 012}))$.
        
        \item[$(\mathcal{B})$]\label{BS:Bnu} $$v_B^{(n)} = \nu(\varphi({\tt 1})\varphi^3({\tt 1})\cdots \varphi^{2n+1}({\tt 1})\varphi^{2n}({\tt 0})\varphi^{2n-2}({\tt 0})\cdots \varphi^2({\tt 0}){\tt 0}) {\tt 0}$$ for $n \geq 0$. 

        The Parikh vector of $v_B^{(n)}$ is the same as of the word ${\tt 0}\nu({\tt 012}\varphi^2({\tt 012})\varphi^4({\tt 012})\dots \varphi^{2n}({\tt 012}))$.

        \item[$(\mathcal{C})$]\label{BS:Cnu} $$v_C^{(n)} = {\tt 1}\nu({\tt 1}\varphi^2({\tt 1})\varphi^4({\tt 1})\cdots \varphi^{2n}({\tt 1})\varphi^{2n}({\tt 0})\varphi^{2n-2}({\tt 0})\cdots \varphi^2({\tt 0}){\tt 0}){\tt 0}$$ for $n \geq 0$.
        
        The Parikh vector of $v_C^{(n)}$ is the same as of the word ${\tt 01}\nu({\tt 01}\varphi^2({\tt 01})\varphi^4({\tt 01})\dots \varphi^{2n}({\tt 01}))$.

        \item[$(\mathcal{D})$]\label{BS:Dnu} $$v_D^{(n)} = \nu(\varphi({\tt 1})\varphi^3({\tt 1})\cdots \varphi^{2n+1}({\tt 1})\varphi^{2n+1}({\tt 0})\varphi^{2n-1}({\tt 0})\cdots \varphi({\tt 0})){\tt 01}$$ for $n \geq 0$.        

        The Parikh vector of $v_D^{(n)}$ is the same as of the word ${\tt 01}\nu(\varphi({\tt 01})\varphi^3({\tt 01})\dots \varphi^{2n+1}({\tt 01}))$.
    \end{itemize}
\end{corollary}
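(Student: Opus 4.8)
The plan is to read the bispecial factors of $\nu(\mathbf{p})$ directly off those of $\mathbf{p}$, using \Cref{lem:nuBS} as the reduction and \Cref{coro:BS} as the input. First I would invoke \Cref{lem:nuBS}: thanks to \Cref{rem:unique_preimage} every factor of length at least two has a synchronization point, so any BS factor $v$ with $|v|\geq 2$ is forced into one of the four decorated shapes $\mathtt{1}\nu(w)\mathtt{01}$, $\nu(w)\mathtt{0}$, $\mathtt{1}\nu(w)\mathtt{0}$, $\nu(w)\mathtt{01}$, where the inner word $w\in\mathcal{L}(\mathbf{p})$ comes equipped with a prescribed pair of left- and right-extension sets. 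This already discharges the genuinely delicate step — tracing how extensions pass through $\nu$ — inside the proof of \Cref{lem:nuBS}, so for the corollary itself the work is to identify $w$ and substitute.

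Next I would observe that each prescribed pair of extension sets is exactly the defining condition of one of the four BS-families of $\mathbf{p}$ in \Cref{coro:BS}. Indeed, in $\mathbf{p}$ every LS factor has left extensions $\{\mathtt{0},\mathtt{2}\}$ or $\{\mathtt{0},\mathtt{1}\}$ and every RS factor has right extensions $\{\mathtt{1},\mathtt{2}\}$ or $\{\mathtt{0},\mathtt{2}\}$, so there are exactly four admissible pairs, in bijection with the families $(\mathcal{A})$–$(\mathcal{D})$. Matching the pair occurring in each item of \Cref{lem:nuBS} to its family therefore pins down $w=w_X^{(n)}$ for the appropriate type $X$ and some $n$, and plugging these explicit words into the four decorated shapes produces precisely $v_A^{(n)},\dots,v_D^{(n)}$. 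The Parikh-vector statements then follow for free: since $\vec{\nu(u)}=M_\nu\vec u$, the Parikh vector of each $v_X^{(n)}$ equals $M_\nu$ applied to the Parikh vector of $w_X^{(n)}$ already computed in \Cref{coro:BS}, plus the Parikh vector of the constant prefix and suffix, which is exactly what the listed reference words record.

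The part that needs care — and which I expect to be the main obstacle — is making this matching a clean bijection and handling the boundary. I would verify (by the extension bookkeeping of \Cref{lem:nuBS}) both that each decorated shape is matched to the correct family, so no family is omitted or double-counted, and conversely that each $w_X^{(n)}$ genuinely yields a bispecial $v$, so the list is exhaustive in both directions. Finally the general shapes presuppose a nonempty inner BS factor of the form $w_X^{(n)}$, so the short inner factors ($\varepsilon$ and the length-one and length-two BS factors $\mathtt{1}$ and $\mathtt{10}$ of $\mathbf{p}$) fall outside the families and must be treated separately; running them through $\nu$ and the decorations yields the two extra factors $\mathtt{01}$ and $\mathtt{10}$ singled out at the start of the statement, and I would confirm directly from the prefix of $\nu(\mathbf{p})$ that these are bispecial and that no further short BS factor of length $\geq 2$ escapes the classification.
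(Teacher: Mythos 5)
Your proposal is correct and is essentially the paper's own argument: the paper derives this corollary in a single line by combining \Cref{lem:nuBS} with \Cref{coro:BS}, which is exactly the reduction-plus-matching you describe (the four extension-condition pairs in \Cref{lem:nuBS} correspond bijectively to the families $(\mathcal{A})$--$(\mathcal{D})$, and the Parikh-vector statements follow from $\vec{\nu(u)}=M_\nu\vec u$ together with the decorating prefixes and suffixes). One small correction to your boundary analysis: the inner words ${\tt 1}$ and ${\tt 10}$ do \emph{not} fall outside the families of \Cref{coro:BS} — they are precisely $w_A^{(0)}$ and $w_C^{(0)}$, yielding $v_A^{(0)}={\tt 1001}$ and $v_C^{(0)}$ — so only the empty inner word $\varepsilon$ requires separate treatment, and it is what produces the two exceptional factors ${\tt 01}$ and ${\tt 10}$ via the shapes $\nu(w){\tt 01}$ and ${\tt 1}\nu(w){\tt 0}$ (the shape ${\tt 1}\nu(\varepsilon){\tt 01}={\tt 101}$ being excluded since it is not bispecial in $\nu({\bf p})$).
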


\subsubsection{The shortest return words to bispecial factors in $\nu(\bf p)$}

\begin{lemma}\label{lem:nu_retword}
If $w$ is a non-empty BS factor in $\bf p$ and $v$ is its return word, then $\nu(v)$ is a return word to $\nu(w){\tt 0}$. 
\end{lemma}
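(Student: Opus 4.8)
The plan is to mimic the proof of Lemma~\ref{lem:images_retwords}, replacing the self-synchronization of $\varphi$ by an explicit description of the synchronization points of $\nu$. The observation I would isolate first is that in $\nu(\bf p)$ \emph{every occurrence of the letter \texttt{0} begins a $\nu$-block}. Indeed, the three images $\nu(\texttt{0})=\texttt{011}$, $\nu(\texttt{1})=\texttt{0}$, $\nu(\texttt{2})=\texttt{01}$ each contain the letter \texttt{0} only in their first position; since $\nu(\bf p)$ is a concatenation of such blocks, no \texttt{0} can occur in a non-initial position of a block. Equivalently, there is a synchronization point immediately to the left of every \texttt{0}. This single fact replaces the more delicate boundary analysis needed for $\varphi$, and it is compatible with Remark~\ref{rem:unique_preimage}.

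Next I would record the easy half of the statement. Since $w$ is bispecial it has a right extension $wa\in\LL(\bf p)$, and every block $\nu(a)$ starts with \texttt{0}, so $\nu(w)\texttt{0}$ is a prefix of $\nu(wa)$ and hence a factor of $\nu(\bf p)$. For the prefix/suffix condition I would write $\nu(v)\bigl(\nu(w)\texttt{0}\bigr)=\nu(vw)\texttt{0}$: as $v$ is a return word, $vw$ has $w$ both as a prefix and as a suffix, and the letter of $\bf p$ following each of these two copies of $w$ maps under $\nu$ to a block beginning with \texttt{0}. Applying $\nu$ and appending the extra \texttt{0} therefore makes $\nu(w)\texttt{0}$ both a prefix and a suffix of $\nu(vw)\texttt{0}$.

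The heart of the argument is that $\nu(w)\texttt{0}$ has no occurrence strictly inside $\nu(vw)\texttt{0}$ (which is itself a factor of $\nu(\bf p)$, the next block after $\nu(vw)$ starting with \texttt{0}). Suppose $\nu(w)\texttt{0}$ occurs at position $i$. Its first letter is \texttt{0}, because $\nu(w)$ starts with a block and every block starts with \texttt{0}; by the observation, position $i$ is a block boundary. Its last letter is the appended \texttt{0}, which is likewise a block boundary. Hence the infix $\nu(w)$ of this occurrence is a concatenation of whole $\nu$-blocks, so by injectivity of $\nu$ it is the image of the factor of $\bf p$ spelled at the corresponding letter positions, which must equal $w$. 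Thus every occurrence of $\nu(w)\texttt{0}$ lies above a genuine occurrence of $w$ in $\bf p$ followed by a letter. Since $v$ is a return word, $w$ occurs in $vw$ only as its prefix and as its suffix, so $\nu(w)\texttt{0}$ occurs in $\nu(vw)\texttt{0}$ only as its prefix and suffix; this is exactly the assertion that $\nu(v)$ is a return word to $\nu(w)\texttt{0}$.

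The main obstacle is the bookkeeping in the last step: one must verify that the boundary argument applies to the finite factor $\nu(vw)\texttt{0}$ and that the correspondence between occurrences of $\nu(w)\texttt{0}$ in $\nu(\bf p)$ and occurrences of $w$ in $\bf p$ is exact, with no spurious (non-block-aligned) alignments. Once the observation about the letter \texttt{0} is in hand, both forced boundaries---at the start of $\nu(w)$ and before the appended \texttt{0}---are automatic, so no case analysis on the first and last letters of $w$ (as in Corollary~\ref{coro:BS}) is actually required.
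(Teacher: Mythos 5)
Your proposal is correct and takes essentially the same route as the paper: both halves match (showing $\nu(w)\texttt{0}$ is a prefix and a suffix of $\nu(v)\nu(w)\texttt{0}$ via the block following each copy of $w$, then excluding interior occurrences by block-boundary/synchronization points $\bullet\,\nu(w)\,\bullet\,\texttt{0}$ together with injectivity of $\nu$). The only difference is cosmetic: you justify the synchronization points by the observation that every \texttt{0} in $\nu({\bf p})$ begins a $\nu$-block, whereas the paper derives them from the first and last letters of a BS factor of ${\bf p}$; as you note, your observation makes that case distinction unnecessary.
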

\begin{proof}
On one hand, consider any occurrence of $vw$ and denote $a$ the following letter, then $\nu(v)\nu(w){\tt 0}$ is a prefix of $\nu(vwa)$. Since $vw$ contains $w$ as a prefix and as a suffix, then $\nu(v)\nu(w){\tt 0}$ contains $\nu(w){\tt 0}$ as a prefix and as a suffix, too. 
On the other hand, $w$ starts in ${\tt 1}$ or ${\tt 2}$ and ends in ${\tt 0}$ or ${\tt 1}$, thus $\nu(w){\tt 0}$ starts in ${\tt 0}$ and ends in ${\tt 0110}$ or ${\tt 00}$, therefore $\nu(w){\tt 0}$ has the following synchronization points $\bullet \nu(w)\bullet {\tt 0}$. Consequently,  $\nu(v)\nu(w){\tt 0}$ cannot contain $\nu(w){\tt 0}$ somewhere in the middle because in such a case, by injectivity of $\nu$, $vw$ would contain $w$ also somewhere in the middle. 
\end{proof}
Applying Lemma~\ref{lem:nu_retword} and Observation~\ref{obs:retwordsLSandRS}, we have the following description of the shortest return words to BS factors.
\begin{corollary}\label{coro:retwords_nu}
The shortest return words to BS factors of length at least three in $\nu(\bf p)$ have the following properties.

\begin{itemize}
        \item[$(\mathcal{A})$]\label{RW:Anu} 
        The shortest return word $\hat{r}_A^{(n)}$ to $v_A^{(n)}$ has the same Parikh vector as $\nu(\varphi^{2n-1}({\tt 012}))$ for $n \geq 1$ and ${\tt 100}$ is the shortest return word to $v_A^{(0)}={\tt 1001}$.
        
        \item[$(\mathcal{B})$]\label{RW:Bnu}         
        The shortest return word $\hat{r}_B^{(n)}$ to $v_B^{(n)}$ has the same Parikh vector as $\nu(\varphi^{2n}({\tt 012}))$.

        \item[$(\mathcal{C})$]\label{RW:Cnu}
        The shortest return word $\hat{r}_C^{(n)}$ to $v_C^{(n)}$ has the same Parikh vector as $\nu(\varphi^{2n}({\tt 01}))$.

        \item[$(\mathcal{D})$]\label{RW:Dnu}
       The shortest return word $\hat{r}_D^{(n)}$ to $v_D^{(n)}$ has the same Parikh vector as $\nu(\varphi^{2n+1}({\tt 01}))$.
    \end{itemize}
\end{corollary}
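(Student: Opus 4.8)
The plan is to transfer everything back to $\bf p$ using the two tools already prepared, namely Lemma~\ref{lem:nu_retword} and Observation~\ref{obs:retwordsLSandRS}. Recall from Corollary~\ref{coro:BS_nu} that each BS factor $v_X^{(n)}$ of $\nu(\bf p)$ (for $X\in\acc{\mathcal{A},\mathcal{B},\mathcal{C},\mathcal{D}}$) is obtained from the factor $\nu(w_X^{(n)})\texttt{0}$, where $w_X^{(n)}$ is the corresponding BS factor of $\bf p$ from Corollary~\ref{coro:BS}, by prepending and/or appending a single letter $\texttt{1}$: explicitly $v_B^{(n)}=\nu(w_B^{(n)})\texttt{0}$, $v_C^{(n)}=\texttt{1}\nu(w_C^{(n)})\texttt{0}$, $v_D^{(n)}=\nu(w_D^{(n)})\texttt{01}$, and $v_A^{(n)}=\texttt{1}\nu(w_A^{(n)})\texttt{01}$. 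So it suffices to compute the shortest return word to the core $\nu(w_X^{(n)})\texttt{0}$ and then show that the extra letters are forced.

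First I would pin down the return words to the core. Let $r$ be the shortest return word to the BS factor $w=w_X^{(n)}$ in $\bf p$; by Corollary~\ref{coro:retwordsp} its Parikh vector is $\varphi^{2n-1}(\texttt{012})$, $\varphi^{2n}(\texttt{012})$, $\varphi^{2n}(\texttt{01})$, or $\varphi^{2n+1}(\texttt{01})$ according to the type, and $r$ is moreover a prefix of all three return words to $w$. By Lemma~\ref{lem:nu_retword}, $\nu(r)$ is a return word to $\nu(w)\texttt{0}$. To see it is the shortest, note that the synchronization $\bullet\nu(w)\bullet\texttt{0}$ from Remark~\ref{rem:unique_preimage} (here using that each of $\nu(\texttt{0}),\nu(\texttt{1}),\nu(\texttt{2})$ begins with $\texttt{0}$) puts the occurrences of $\nu(w)\texttt{0}$ in $\nu(\bf p)$ in bijection with the occurrences of $w$ in $\bf p$; hence the return words to $\nu(w)\texttt{0}$ are exactly the $\nu$-images of the three return words to $w$ (each factor of $\bf p$ has exactly three, see \cite{BaPeSt2008}). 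Since $r$ is a prefix of all of them, $\nu(r)$ is a prefix of all their $\nu$-images and is therefore the shortest return word to $\nu(w)\texttt{0}$, with Parikh vector $\nu(\vec r\,)$.

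Next I would remove the auxiliary letters with Observation~\ref{obs:retwordsLSandRS}. Each step from $\nu(w)\texttt{0}$ up to $v_X^{(n)}$ either appends a letter $\texttt{1}$ that is a unique right extension, or prepends a letter $\texttt{1}$ that is a unique left extension; in the first case the set of return words is unchanged, and in the second each return word is merely conjugated. In both situations the Parikh vector of the shortest return word is preserved. Therefore the shortest return word to $v_X^{(n)}$ has Parikh vector $\nu(\vec r\,)$, which is exactly $\nu(\varphi^{2n-1}(\texttt{012}))$, $\nu(\varphi^{2n}(\texttt{012}))$, $\nu(\varphi^{2n}(\texttt{01}))$, or $\nu(\varphi^{2n+1}(\texttt{01}))$ as claimed. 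The degenerate base case $v_A^{(0)}=\texttt{1001}$, where $w_A^{(0)}=\texttt{1}$ has two distinct shortest return words so the prefix argument fails, is settled by inspecting the prefix of $\nu(\bf p)$ directly and reading off the return word $\texttt{100}$; the remaining base cases $v_B^{(0)},v_C^{(0)},v_D^{(0)}$ cause no trouble since the shortest return words to $w_B^{(0)},w_C^{(0)},w_C^{(0)}$ are unique prefixes.

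I expect the main obstacle to be the forcedness check in the previous paragraph: one must verify that the intermediate one-letter extensions leading from $\nu(w)\texttt{0}$ to the genuinely bispecial $v_X^{(n)}$ are indeed unique, i.e.\ determine the left and right extension sets of these intermediate factors in $\nu(\bf p)$. This is where the shape of $\nu$ and the extension sets of $w$ in $\bf p$ recorded in Lemma~\ref{lem:ordinary} must be combined carefully; everything else is a direct translation through $\nu$.
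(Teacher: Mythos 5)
Your proposal follows essentially the same route as the paper's proof: transfer the shortest return word of the corresponding bispecial factor of $\bf p$ (Corollary~\ref{coro:retwordsp}) through Lemma~\ref{lem:nu_retword}, using that it is a prefix of all return words so that its $\nu$-image stays shortest, and then strip the forced letters $\texttt{1}$ with Observation~\ref{obs:retwordsLSandRS}, singling out the degenerate case $v_A^{(0)}$ where the shortest return word to $w_A^{(0)}=\texttt{1}$ is not unique. The only differences are presentational: you make explicit the bijection showing that \emph{all} return words to the core $\nu(w)\texttt{0}$ are $\nu$-images of return words to $w$ (which the paper leaves implicit in its use of Lemma~\ref{lem:nu_retword}), and you settle $v_A^{(0)}$ by direct inspection where the paper instead pushes the three explicit return words $\texttt{12},\texttt{10},\texttt{102}$ of $\texttt{1}$ through the same machinery to obtain $\texttt{100}$, $\texttt{1001}$, $\texttt{100110}$.
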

\begin{proof}
We will prove case $(\mathcal{A})$. The other cases are similar. 
We know that the return words to $w_A^{(0)} = {\tt 1}$ in $\bf p$ are ${\tt 12}, {\tt 10}, { \tt 102}$.
Using Lemma~\ref{lem:nu_retword}, we obtain that $\nu({\tt 12})={\tt 001}$, $\nu({\tt 10}) ={\tt 0011}$ and $\nu({\tt 102}) = {\tt 001101}$ are return words to
$\nu({\tt 1})0$.
Since $\nu({\tt 1})0$ has unique left and right extensions $\tt 1$, using twice Observation~\ref{obs:retwordsLSandRS}, we obtain that ${\tt 1}\nu({\tt 12}){\tt 1}^{-1} = {\tt 100}$, $\tt 1001$ and $\tt 100110$ are return words to $v_A^{0} = {\tt 1001}$. 
Therefore, the shortest return word to $v_A^{(0)} = {\tt 1001}$ is ${\tt  100}$ and it is a prefix of all of them. 

Now, let us consider $n \geq 1$ and the bispecial factor 
$$v_A^{(n)} = {\tt 1}\nu( {\tt 1}\varphi^2({\tt 1})\varphi^4({\tt 1})\cdots \varphi^{2n}({\tt 1})\varphi^{2n-1}({\tt 0})\varphi^{2n-3}({\tt 0})\cdots \varphi({\tt 0})){\tt 01} = {\tt 1} \nu(w_A^{(n)}) {\tt 01 }.$$
Using Corollary~\ref{coro:retwordsp}, we know that the shortest return word to $w_A^{(n)}$ has the same Parikh vector as $\varphi^{2n-1}({\tt 012})$, moreover the shortest return word is a prefix of all other return words.

Using Lemma~\ref{lem:nu_retword}, and the fact that $\nu$ is non-erasing, we obtain that the shortest return word to $\nu(w_A^{(n)}){\tt 0 }$ has the same Parikh vector as $\nu(\varphi^{2n-1}({\tt 012}))$. 
Using Observation~\ref{obs:retwordsLSandRS} twice, we obtain that the shortest return word to  ${\tt 1}\nu(w_A^{(n)}){\tt 01}$ has the same Parikh vector as $\nu(\varphi^{2n-1}({\tt 012}))$, since adding $\tt 1$ at the beginning and erasing ${\tt 1}$ at the end does not change the Parikh vector. 
 \end{proof}

\subsubsection{The critical exponent of $\nu(\bf p)$}
Using Theorem~\ref{thm:formulaE} and the description of BS factors from Corollary~\ref{coro:BS_nu} and of their shortest return words from Corollary~\ref{coro:retwords_nu}, we obtain the following formula for the critical exponent of $\nu(\bf p)$.
$$
E(\nu({\bf p})) = 1 + \max\left\{A,B,C,D,F\right\}\,,
$$
where
\begin{align*}
A &= \sup\left\{\frac{|v_A^{(n)}|}{|\hat{r}_A^{(n)}|} : n\geq 1 \right\} 
= \sup\left\{\frac{|{\tt 011}\nu({\tt 1}\varphi({\tt 012})\varphi^3({\tt 012})\dots \varphi^{2n-1}({\tt 012}))|}{|\nu(\varphi^{2n-1}({\tt 012}))|} : n\geq 1 \right\} \cup \left\{\frac{|{\tt 1001}|}{|{\tt 100}|}\right\}\,; \\
B &= \sup\left\{\frac{|v_B^{(n)}|}{|\hat{r}_B^{(n)}|} : n\geq 0 \right\} 
= \sup\left\{\frac{|{\tt 0}\nu({\tt 012}\varphi^2({\tt 012})\varphi^4({\tt 012})\dots \varphi^{2n}({\tt 012}))|}{|\nu(\varphi^{2n}({\tt 012}))|} : n\geq 0 \right\}\,; \\
C &= \sup\left\{\frac{|v_C^{(n)}|}{|\hat{r}_C^{(n)}|} : n\geq 0 \right\} 
= \sup\left\{\frac{|{\tt 01}\nu({\tt 01}\varphi^2({\tt 01})\varphi^4({\tt 01})\dots \varphi^{2n}({\tt 01}))|}{|\nu(\varphi^{2n}({\tt 01}))|} : n\geq 1 \right\}\cup \left\{\frac{|{\tt 1}\nu({\tt 10}){\tt 0}|}{|\nu({\tt 10})|}\right\}\,; \\
D &= \sup\left\{\frac{|v_D^{(n)}|}{|\hat{r}_D^{(n)}|} : n\geq 0 \right\}
= \sup\left\{\frac{|{\tt 01}\nu(\varphi({\tt 01})\varphi^3({\tt 01})\dots \varphi^{2n+1}({\tt 01}))|}{|\nu(\varphi^{2n+1}({\tt 01}))|} : n\geq 0 \right\}\,;\\
F&=\max\left\{\frac{|w|}{|r|} : w \text{\ BS in $\nu(\bf p)$ of length one or two and $r$ its shortest return word} \right\}\,.
\end{align*}

\begin{theorem}
    The critical exponent of $\nu({\bf p})$ equals 
    \begin{equation*}
        E(\nu({\bf p})) = \frac{5}{2}.        
    \end{equation*}
\end{theorem}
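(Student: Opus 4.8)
The plan is to evaluate the formula $E(\nu({\bf p})) = 1 + \max\{A,B,C,D,F\}$ established just above by computing each of the five quantities explicitly. All lengths entering $A,B,C,D$ depend on the factors only through their Parikh vectors, so I would set $L_n := |\nu(\varphi^n(\texttt{012}))|$ and $M_n := |\nu(\varphi^n(\texttt{01}))|$. Both are obtained from the incidence matrices via $|\nu(w)| = (3,1,2)\vec w$ and $\vec{\varphi^n(w)} = M_\varphi^n\vec w$, and hence both satisfy the linear recurrence $x_{n+3}=2x_{n+2}-x_{n+1}+x_n$ governed by $\beta$. Using \Cref{coro:BS_nu,coro:retwords_nu}, each supremum becomes a ratio of a partial sum to its last term; for instance $\tfrac{|v_C^{(n)}|}{|\hat r_C^{(n)}|} = \tfrac{2 + \sum_{k=0}^{n} M_{2k}}{M_{2n}}$, and similarly for $A,B,D$ with the appropriate parities and the small additive offsets ($\texttt{011}\nu(\texttt 1)$, $\texttt 0$, $\texttt{01}$) read off from the Parikh-vector descriptions.

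First I would isolate the term that produces the answer. For $n=0$ the family $(\mathcal C)$ gives the bispecial factor $v_C^{(0)} = \texttt{1}\nu(\texttt{10})\texttt{0} = \texttt{100110}$, whose shortest return word has the Parikh vector of $\nu(\texttt{01})$ and length $4$ (concretely $\texttt{1001}$, witnessed by the factor $\texttt{1001100110}=(\texttt{1001})^{5/2}$ of $\nu({\bf p})$). This contributes $1 + \tfrac{6}{4} = \tfrac52$, so $E(\nu({\bf p})) \ge \tfrac52$, and moreover this exponent is attained.

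It then remains to show that $\max\{A,B,C,D,F\} = \tfrac32$, i.e.\ that no other contribution exceeds $\tfrac32$. The crucial input is that the asymptotic value is already known: $E^*(\nu({\bf p})) = E^*({\bf p}) = 1 + \tfrac{\beta^2}{\beta^2-1} \doteq 2.48$, so each of the four ratio sequences converges to $\tfrac{\beta^2}{\beta^2-1}<\tfrac32$. I would prove that each sequence in fact stays below this limit for every $n$ (the only exception being $v_C^{(0)}$), which, after clearing denominators, reduces to an inequality of the shape $M_{2n} \ge 4 + 2\sum_{k=0}^{n-1}M_{2k}$ for family $(\mathcal C)$ and its analogues for $A,B,D$; these follow by a short induction on $n$ using the recurrence for $L_n,M_n$, with the few initial cases checked by hand (e.g.\ $\tfrac{19}{13},\tfrac{59}{40},\dots$ for $(\mathcal C)$, and comparably small fractions for the others). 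Finally, the term $F$ is handled by directly inspecting the shortest return words to the length-one and length-two bispecial factors $\texttt 0,\texttt 1,\texttt{01},\texttt{10}$ of $\nu({\bf p})$: the two letters recur at distance $1$ and the length-two factors have return words of length at least two, so every such ratio is at most $1$ and $F \le 1 < \tfrac32$.

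The main obstacle is the uniform upper bound over all four families: one must rule out that some intermediate term overshoots $\tfrac32$ before the sequences settle toward their common limit $\tfrac{\beta^2}{\beta^2-1}$. This is exactly where the induction on the partial-sum inequalities (equivalently, a monotonicity argument using that the subdominant roots of $t^3-2t^2+t-1$ have modulus below $1$) is needed, together with careful bookkeeping of the additive offsets and of the boundary cases $n=0$. Once these are in place, $\max\{A,B,C,D,F\}=\tfrac32$ is achieved uniquely by $v_C^{(0)}$, and $E(\nu({\bf p})) = \tfrac52$ follows.
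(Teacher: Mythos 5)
Your proposal is correct and reaches the right answer by the same overall skeleton as the paper (the formula $E(\nu({\bf p}))=1+\max\{A,B,C,D,F\}$, the description of bispecial factors and their shortest return words, the identification of the extremal term $v_C^{(0)}=\texttt{100110}$ with return word of length $4$ giving the factor $(\texttt{1001})^{5/2}$ and hence the lower bound, and the check $F\le 1$), but it genuinely diverges in how the upper bounds $A,B,C,D\le\tfrac32$ are established. The paper solves the recurrences for $c_n=|\nu(\varphi^n(\texttt{012}))|$ and $d_n=|\nu(\varphi^n(\texttt{01}))|$ in closed form, $c_n=A_1\beta^n+B_1\lambda_1^n+C_1\lambda_2^n$ and similarly for $d_n$, and then dominates the oscillating parts by $|B_i|$, $|\lambda_1|$ estimates, concluding with numerically verified inequalities (plus a hand check of the case $n=1$ for $C$, which gives $\tfrac{19}{13}$). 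You instead clear denominators and reduce everything to integer inequalities such as $M_{2n}\ge 4+2\sum_{k=0}^{n-1}M_{2k}$ (with offsets $8$, $2$, $4$ for the families $(\mathcal A)$, $(\mathcal B)$, $(\mathcal D)$), to be proven by induction; this is more elementary, avoids complex arithmetic and floating-point estimates entirely, and yields the same conclusion. One caveat you should spell out: the induction step of your inequality amounts to $M_{2n+2}\ge 3M_{2n}$, which is \emph{not} immediate from the recurrence $x_{n+3}=2x_{n+2}-x_{n+1}+x_n$ because of its negative coefficient. It does follow, e.g., by first showing $(d_n)$ is increasing (easy induction), deducing $d_{n+1}\le 2d_n$ directly from the recurrence, and then using the iterated identity $d_{m+4}-3d_{m+2}=2d_m-d_{m+1}$ to close the induction; with that observation added, your plan goes through completely. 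Finally, your appeal to $E^*(\nu({\bf p}))=1+\tfrac{\beta^2}{\beta^2-1}<\tfrac52$ is sound as motivation but becomes unnecessary once the inductive inequalities are in place, whereas in the paper the closed-form analysis does the corresponding work.
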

\begin{proof}
To evaluate the critical exponent of $\nu(\bf p)$ using the above formula, we have to do several steps. 
\begin{enumerate}
\item Determining the shortest return words of BS factors of length one and two in $\nu(\bf p)$:
\begin{itemize}
    \item {\tt 0} is a BS factor with the shortest return word {\tt 0}. 
    \item {\tt 1} is a BS factor with the shortest return word {\tt 1}. 
    \item {\tt 01} is a BS factor with the shortest return words {\tt 010}, {\tt 011}. 
    \item {\tt 10} is a BS factor with the shortest return word {\tt 10}. 
    \end{itemize}
Thus for each BS factor $w$ of length one or two and its shortest return word $r$ we have
$\frac{|w|}{|r|}\leq 1$ and $F=1$.
\item Computation of $A$ and $B$.

Denote $c_n := |\nu(\varphi^n(012))|$, then it satisfies the recurrence relation $c_{n+1} = 2c_n - c_{n-1} + c_{n-2}$
with initial conditions $c_0 = 6, c_1 = 10$ and $c_2 = 17$.

The explicit solution reads
$$c_n = A_1\beta^n + B_1 \lambda_1^n + C_1 \lambda_2^n,$$
where
$$\beta \doteq 1.75488,\quad \lambda_1 \doteq 0.12256 + 0.74486 i, \quad \lambda_2 = \overline{\lambda_1}$$
are the roots of the polynomial $t^3 - 2t^2 + t - 1$, and
\begin{align*}
    A_1 &= \frac{6|\lambda_1|^2 - 20\operatorname{Re}(\lambda_1) + 17}{|\beta-\lambda_1|^2} \doteq 5.581308964\,;
\\
    B_1 &= \frac{6\beta\lambda_2 - 10 (\beta+\lambda_2)+17}{(\beta - \lambda_1)(\lambda_2-\lambda_1)} \doteq 0.209345518	-0.103481025	i\,;\\
    C_1 &= \overline{B_1}\,.
\end{align*}

Let us show that $A \leq \frac{3}{2}$. Since $\frac{|\tt 1001|}{|\tt 100|}=\frac{4}{3}<\frac{3}{2}$, it remains to show for all $n\geq 1$ that

\begin{align*}
    \frac{4 + A_1\sum_{k=1}^n\beta^{2k-1} + B_1 \sum_{k=1}^n \lambda_1^{2k-1} + C_1 \sum_{k=1}^n\lambda_2^{2k-1} }{A_1\beta^{2n-1} + B_1 \lambda_1^{2n-1} + C_1 \lambda_2^{2n-1}} 
    \quad &\leq^? \quad  \frac{3}{2},\\
    8 + 2A_1\sum_{k=1}^{n} \beta^{2k-1} + 4\operatorname{Re}\left(B_1 \sum_{k=1}^{n}  \lambda_1^{2k-1} \right) 
    \quad &\leq^? \quad  3A_1\beta^{2n-1} + 6\operatorname{Re}(B_1\lambda_1^{2n-1}),\\
8 + 2A_1\sum_{k=1}^{n-1} \beta^{2k-1} + 4\operatorname{Re}\left(B_1 \sum_{k=1}^{n-1}  \lambda_1^{2k-1} \right) 
    \quad &\leq^? \quad  A_1\beta^{2n-1} + 2\operatorname{Re}(B_1\lambda_1^{2n-1}),\\
    8 + 2A_1\left( \frac{\beta^{2n-1}}{\beta^2-1} - \frac{\beta}{\beta^2 - 1}\right) + 4\operatorname{Re}\left(B_1\lambda_1\frac{1-\lambda_1^{2n-2}}{1-\lambda_1^2} \right) 
    \quad &\leq^? \quad  A_1\beta^{2n-1} + 2\operatorname{Re}(B_1\lambda_1^{2n-1}).
\end{align*}

Since
\begin{align*}
\frac{2}{\beta^2-1} &\leq 1,
\end{align*}
we need to prove the inequality in the form
\begin{align*}
 8 + 4\operatorname{Re}\left(B_1\lambda_1\frac{1-\lambda_1^{2n-2}}{1-\lambda_1^2} \right) 
    \quad &\leq^? \quad  2A_1\frac{\beta}{\beta^2 - 1}+ 2\operatorname{Re}(B_1\lambda_1^{2n-1}).
\end{align*}

For the left side, we can write for $n \geq 1$
\begin{align*}
     8 + 4\operatorname{Re}\left(B_1\lambda_1\frac{1-\lambda_1^{2n-2}}{1-\lambda_1^2} \right) 
     \quad &\leq \quad 
     8 + 4 |B_1||\lambda_1|\frac{|\lambda_1|^{2n-2} + 1}{|\lambda_1^2-1|}\\
     \quad &\leq \quad 
     8 + 4 |B_1||\lambda_1|\frac{2}{|\lambda_1^2-1|}.
\end{align*}
For the right side, we can write for $n \geq 1$
\begin{align*}
    \quad  2A_1\frac{\beta}{\beta^2 - 1}+ 2\operatorname{Re}(B_1\lambda_1^{2n-1})
    \quad &\geq \quad
    2A_1\frac{\beta}{\beta^2 - 1} - 2 |B_1||\lambda_1|^{2n-1} \\
    &\geq \quad 2A_1\frac{\beta}{\beta^2 - 1} - 2 |B_1||\lambda_1|.
\end{align*}

Since the inequality 
\begin{equation*}
    8 + 4 |B_1||\lambda_1|\frac{2}{|\lambda_1^2-1|} \quad \leq\quad  2A_1\frac{\beta}{\beta^2 - 1} - 2 |B_1||\lambda_1|
\end{equation*}
holds true for the given values, we obtain $A\leq \frac{3}{2}$.

Next, we will show that $B \leq \frac{3}{2}$. 

For all $n \geq 0$ we have to show that
\begin{align*}
    \frac{1+ A_1\sum_{k=0}^n\beta^{2k} + B_1 \sum_{k=0}^n \lambda_1^{2k} + C_1 \sum_{k=0}^n\lambda_2^{2k} }{A_1\beta^{2n} + B_1 \lambda_1^{2n} + C_1 \lambda_2^{2n}} 
    \quad &\leq^? \quad  \frac{3}{2},\\
    2+2A_1\sum_{k=0}^{n-1}\beta^{2k} + 4\operatorname{Re}\left(B_1\sum_{k=0}^{n-1} \lambda_1^{2k}\right) 
    \quad &\leq^? \quad A_1\beta^{2n} + 2 \operatorname{Re}\left(B_1 \lambda_1^{2n}\right),\\
    2+2A_1\frac{\beta^{2n}-1}{\beta^2-1} + 4\operatorname{Re}\left(B_1\frac{\lambda_1^{2n}-1}{\lambda_1^2-1}\right) 
    \quad &\leq^? \quad A_1\beta^{2n} + 2 \operatorname{Re}\left(B_1 \lambda_1^{2n}\right).
    \end{align*}

Since
\begin{align*}
\frac{2}{\beta^2-1} &\leq 1,
\end{align*}
we need to prove the inequality in the form
\begin{align*}
    2 + 4\operatorname{Re}\left(B_1\frac{\lambda_1^{2n}-1}{\lambda_1^2-1}\right) 
    \quad &\leq^? \quad A_1\frac{2}{\beta^2-1}  + 2 \operatorname{Re}\left(B_1 \lambda_1^{2n}\right).
\end{align*}

For the left side, we can write for $n \geq 0$
\begin{align*}
     2 + 4\operatorname{Re}\left(B_1\frac{\lambda_1^{2n}-1}{\lambda_1^2-1}\right) 
    \quad &\leq \quad
    2 + 4|B_1| \frac{|\lambda_1|^{2n} +1}{|\lambda_1^2-1|}\\
    \quad &\leq \quad
    2 + 4|B_1| \frac{2}{|\lambda_1^2-1|}.    
\end{align*}

For the right side, we can write for $n \geq 0$
\begin{align*}
    A_1\frac{2}{\beta^2-1}  + 2 \operatorname{Re}\left(B_1 \lambda_1^{2n}\right)
    \quad &\geq \quad
    A_1\frac{2}{\beta^2 - 1} - 2 |B_1||\lambda_1|^{2n} \\
    &\geq \quad A_1\frac{2}{\beta^2 - 1} - 2 |B_1|.
\end{align*}

Since the inequality 
\begin{equation*}
    2 + 4|B_1| \frac{2}{|\lambda_1^2-1|}   
    \quad \leq\quad  
    A_1\frac{2}{\beta^2 - 1} - 2 |B_1|
\end{equation*}
holds true for given values, we obtain $B\leq \frac{3}{2}$.

\item Computation of $C$ and $D$.

Denote $d_n := |\nu(\varphi^n(01))|$, then it satisfies the same recurrence relation as $c_n$, i.e., $d_{n+1} = 2d_n - d_{n-1} + d_{n-2}$,
with initial conditions $d_0 = 4, d_1 = 7$ and $d_2 = 13$.

The explicit solution reads
$$d_n = A_2\beta^n + B_2 \lambda_1^n + C_2 \lambda_2^n,$$
where
$$\beta \doteq 1.75488,\quad \lambda_1 \doteq 0.12256 + 0.74486 i, \quad \lambda_2 = \overline{\lambda_1}$$
are the roots of the polynomial $t^3 - 2t^2 + t - 1$, and
\begin{align*}
    A_2 &= \frac{4|\lambda_1|^2 - 14\operatorname{Re}(\lambda_1) + 13}{|\beta-\lambda_1|^2} \doteq 4.213205567\,;
\\
    B_2 &= \frac{4\beta\lambda_2 - 7 (\beta+\lambda_2)+13}{(\beta - \lambda_1)(\lambda_2-\lambda_1)} \doteq -0.106602784	+	0.24671731	i\,;
\\
    C_2 &= \overline{B_2}\,.
\end{align*}

First, we will show that $C=\frac{3}{2}$. Recall that
$$C=\sup\left\{\frac{|01\nu(01\varphi^2(01)\varphi^4(01)\dots \varphi^{2n}(01))|}{|\nu(\varphi^{2n}(01))|} : n\geq 1 \right\}\cup \left\{\frac{|1\nu(10)0|}{|\nu(10)|}\right\}\,,$$ 
consequently, $C\geq \frac{|1\nu(10)0|}{|\nu(10)|}=\frac{3}{2}$.


It suffices to show for all $n \geq 1$ that
\begin{align*}
    \frac{2+ A_2\sum_{k=0}^n\beta^{2k} + B_2 \sum_{k=0}^n \lambda_1^{2k} + C_2 \sum_{k=0}^n\lambda_2^{2k} }{A_2\beta^{2n} + B_2 \lambda_1^{2n} + C_2 \lambda_2^{2n}} 
    \quad &\leq^? \quad  \frac{3}{2},\\
    4+2A_2\sum_{k=0}^{n-1}\beta^{2k} + 4\operatorname{Re}\left(B_2\sum_{k=0}^{n-1} \lambda_1^{2k}\right) 
    \quad &\leq^? \quad A_2\beta^{2n} + 2 \operatorname{Re}\left(B_2 \lambda_1^{2n}\right),\\
    4+2A_2\frac{\beta^{2n}-1}{\beta^2-1} + 4\operatorname{Re}\left(B_2\frac{\lambda_1^{2n}-1}{\lambda_1^2-1}\right) 
    \quad &\leq^? \quad A_2\beta^{2n} + 2 \operatorname{Re}\left(B_2 \lambda_1^{2n}\right).
\end{align*}

Since
\begin{align*}
\frac{2}{\beta^2-1} &\leq 1,
\end{align*}
we need to prove the inequality in the form
\begin{align*}
    4 + 4\operatorname{Re}\left(B_2\frac{\lambda_1^{2n}-1}{\lambda_1^2-1}\right) 
    \quad &\leq^? \quad A_2\frac{2}{\beta^2-1}  + 2 \operatorname{Re}\left(B_2 \lambda_1^{2n}\right).
\end{align*}

Now, we need to be more careful with the approximations. 
For the left side, we can write for $n \geq 2$
\begin{align*}
     4 + 4\operatorname{Re}\left(B_2\frac{\lambda_1^{2n}-1}{\lambda_1^2-1}\right) 
    \quad &\leq \quad     
     4 + 4\operatorname{Re}\left(\frac{B_2}{1-\lambda_1^2}\right) + 4|B_2|\frac{|\lambda_1|^{2n}}{|1-\lambda_1^2|}\\ 
    \quad &\leq \quad
    4 + 4\operatorname{Re}\left(\frac{B_2}{1-\lambda_1^2}\right) + 4|B_2|\frac{|\lambda_1|^{4}}{|1-\lambda_1^2|}.
\end{align*}

For the right side, we can write for $n \geq 2$
\begin{align*}
     A_2\frac{2}{\beta^2-1}  + 2 \operatorname{Re}\left(B_2 \lambda_1^{2n}\right)
     \quad &\geq \quad
    A_2\frac{2}{\beta^2 - 1} - 2 |B_2||\lambda_1|^{2n} \\
    &\geq \quad A_2\frac{2}{\beta^2 - 1} - 2 |B_2||\lambda_1|^{4}.
\end{align*}

Since the inequality 
\begin{equation*}
     4 + 4\operatorname{Re}\left(\frac{B_2}{1-\lambda_1^2}\right) + 4|B_2|\frac{|\lambda_1|^{4}}{|1-\lambda_1^2|}
    \quad \leq\quad  
    A_2\frac{2}{\beta^2 - 1} - 2 |B_2||\lambda_1|^{4}
\end{equation*}
holds true for given values, it remains to check the case for $n = 1$.

 If $n=1$, we get 
 \begin{equation*}
     \frac{2 + d_0 + d_2}{d_2} = \frac{19}{13} < \frac{3}{2}. 
 \end{equation*}

Therefore, we have proven that $C = \frac{3}{2}$.


It remains to prove $D \leq \frac{3}{2}$, however, the steps are the same as in the proof of the inequality $A \leq \frac{3}{2}$. Thus, we dare to skip it.



We have shown that $\max\{A, B, C, D\} = \frac{3}{2}$, and $F = 1$.
Consequently, $E(\nu({\bf p}))=1+\max\{A,B,C,D,F\}=\frac{5}{2}$.
\end{enumerate}
\end{proof}
\subsection{The infinite word $\mu(\bf p)$}\label{sec:mu}
The morphism $\mu$ has the form:
\begin{align*}
    \mu({\tt 0}) &= {\tt 011001},\\
    \mu({\tt 1}) &= {\tt 1001},\\
    \mu({\tt 2}) &= {\tt 0}.  
\end{align*}

{\scriptsize $$\mu({\bf p}) = {\tt 0110011001010010110010100101100110010110010100101100110010100101100110010110010} \dots$$}
and $\mu$ is injective.
\begin{remark}\label{rem:synchr_point_mu} The reader may easily check that any factor of $\mu(\bf p)$ of length at least six has a synchronization point.
\end{remark}
Using the above remark and Theorem~\ref{thm:E*_morphic_image}, we deduce that 
$$E^*(\mu({\bf p}))=E^*(\bf p).$$
\subsubsection{Bispecial factors in $\mu(\bf p)$}
\begin{lemma}\label{lem:muBS}
    Let  $v \in \LL(\mu(\bf p))$ be a BS factor of length at least six.
    Then one of the items holds.
    \begin{enumerate}
    \item\label{theorem:BSAmu} There exists $w\in \LL(\bf p)$ such that $v = \mu(w){\tt 01}$ and ${\tt 0}w,{\tt 2}w, w{\tt 0},w{\tt 2} \in \LL(\bf p)$.        
        \item\label{theorem:BSBmu} There exists $w\in \LL(\bf p)$ such that $v = {\tt 011001}\mu(w)$ and ${\tt 0}w,{\tt 1}w, w{\tt 1},w{\tt 2} \in \LL(\bf p)$.
        \item\label{theorem:BSCmu} There exists $w\in \LL(\bf p)$ such that $v = \mu(w)$ and ${\tt 0}w,{\tt 2}w, w{\tt 1},w{\tt 2} \in \LL(\bf p)$.
            \item\label{theorem:BSDmu} There exists $w\in \LL(\bf p)$ such that $v = {\tt 011001}\mu(w){\tt 01}$ and ${\tt 0}w,{\tt 1}w, w{\tt 0},w{\tt 2} \in \LL(\bf p)$.  
        \end{enumerate}    
\end{lemma}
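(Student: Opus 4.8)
The plan is to mirror the structure of the analogous result for $\nu(\bf p)$ (Lemma~\ref{lem:nuBS}), exploiting the synchronization property recorded in Remark~\ref{rem:synchr_point_mu} together with the complete catalogue of left and right extensions of factors in $\bf p$ obtained in \Cref{sec:p}. The key input is that, by Remark~\ref{rem:synchr_point_mu}, every factor $v$ of $\mu(\bf p)$ with $|v|\ge 6$ has a synchronization point, so that $v$ decomposes along the images $\mu(\tt 0)=\tt 011001$, $\mu(\tt 1)=\tt 1001$, $\mu(\tt 2)=\tt 0$ and hence has a well-defined preimage structure under the injective morphism $\mu$.

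First I would argue that a BS factor $v$ of length at least six must begin and end at positions dictated by a small, explicit set of possibilities. Since $\mu(\tt 1)=\tt 1001$ is a suffix of $\mu(\tt 0)=\tt 011001$ and $\mu(\tt 2)=\tt 0$ is a prefix of $\mu(\tt 0)$, the letters of $\bf p$ to the left and right of the preimage window govern which prefixes/suffixes can be appended to $v$; these overlaps are exactly what force the four boundary decorations $\tt 011001$ on the left and $\tt 01$ on the right. The second step is to write $v=x\,\mu(w)\,y$ where $x\bullet$ and $\bullet y$ are the leftmost and rightmost synchronization points, so that $w\in\LL(\bf p)$ is uniquely determined by injectivity of $\mu$, and the decorations $x,y$ lie in $\{\varepsilon,\tt 011001\}$ and $\{\varepsilon,\tt 01\}$ respectively.

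The crux is then a case analysis translating the bispeciality of $v$ in $\mu(\bf p)$ into the bispeciality of $w$ in $\bf p$. Because $\bf p$ has exactly the left extensions $\{\tt 0,1\}$ or $\{\tt 0,2\}$ and right extensions $\{\tt 1,2\}$ or $\{\tt 0,2\}$ (as recalled before \Cref{lem:LSp,lem:RSp}), I would check that each of the four admissible pairs of extension-sets of $w$ forces exactly one of the boundary shapes of $v$, yielding precisely Items~\ref{theorem:BSAmu}--\ref{theorem:BSDmu}. Concretely, a left extension $\tt 2$ of $w$ contributes a $\mu(\tt 2)=\tt 0$ overlap that is absorbed into $\mu(w)$ and leaves no decoration, whereas a left extension by $\tt 0$ versus $\tt 1$ is distinguished only after prepending the full block $\tt 011001$; symmetrically on the right, the extensions $\tt 0$ versus $\tt 2$ become visible only after appending $\tt 01$. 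Matching each of the four extension-type combinations of $w$ listed in Corollary~\ref{coro:BS} against the induced extensions of $v$ gives the four cases.

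The main obstacle I anticipate is bookkeeping at the boundaries: verifying that the asserted decorations are forced (not merely possible) requires confirming, for each case, that no shorter or longer synchronizing decomposition yields a bispecial $v$, and that the two extensions of $v$ on each side are genuinely inherited from those of $w$ rather than created or destroyed by the overlap structure of $\mu$. I would handle this by checking a bounded prefix/suffix window—length at most the maximal image length plus the synchronization threshold of Remark~\ref{rem:synchr_point_mu}—which reduces the verification to finitely many explicit factors, exactly as the parallel claim for $\nu$ was dispatched via its Remark~\ref{rem:unique_preimage}.
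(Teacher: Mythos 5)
Your proposal is correct and follows essentially the paper's own route: the paper states this lemma without an explicit proof, the analogous Lemma~\ref{lem:nuBS} being dispatched in one sentence by appeal to the synchronization remark and to the possible left/right extensions of factors in $\bf p$, which is precisely the argument you develop (synchronization points from Remark~\ref{rem:synchr_point_mu}, injectivity of $\mu$, and the matching of the extension sets $\{\texttt{0},\texttt{2}\}$ versus $\{\texttt{0},\texttt{1}\}$ on the left and $\{\texttt{1},\texttt{2}\}$ versus $\{\texttt{0},\texttt{2}\}$ on the right to the decorations $\varepsilon$/$\texttt{011001}$ and $\varepsilon$/$\texttt{01}$). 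Your identification of the overlap relations ($\mu(\texttt{1})$ a suffix of $\mu(\texttt{0})$, $\mu(\texttt{2})$ a prefix of $\mu(\texttt{0})$) as the source of the decorations, and of the remaining boundary bookkeeping as a finite verification against the factors of $\bf p$, is exactly the content the paper leaves implicit.
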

We would like to point out that in this section, we use the same notation for BS factors and their shortest return words as in Section~\ref{sec:nu}. We are persuaded that no confusion arises since we do not refer here to the BS factors and their shortest return words from Section~\ref{sec:nu}.

\begin{corollary}\label{coro:BS_mu}    
Let $v$ be a BS factor in $\mu(\bf p)$ of length at least six. Then $v={\tt 011001}$ or $v={\tt 100101}$ or $v={\tt 01100101}$ or $v$ has one of the following forms:
    \begin{itemize}
        \item[$(\mathcal{A})$]\label{BS:Amu} 
        $$v_A^{(n)} = \mu({\tt 1}\varphi^2({\tt 1})\varphi^4({\tt 1})\cdots \varphi^{2n}({\tt 1})\varphi^{2n-1}({\tt 0})\varphi^{2n-3}({\tt 0})\cdots \varphi({\tt 0})){\tt 01}$$ for $n \geq 1$. 
        
        The Parikh vector of $v_A^{(n)}$ is the same as of the word ${\tt 01}\mu({\tt 1}\varphi({\tt 012})\varphi^3({\tt 012})\dots \varphi^{2n-1}({\tt 012}))$.
        
        \item[$(\mathcal{B})$]\label{BS:Bmu} 
        \begin{align*}
            v_B^{(n)} &= {\tt 011001}\mu(\varphi({\tt 1})\varphi^3({\tt 1})\cdots \varphi^{2n+1}({\tt 1})\varphi^{2n}({\tt 0})\varphi^{2n-2}({\tt 0})\cdots \varphi^2({\tt 0}){\tt 0})\\ 
            &= \mu({\tt 0}\varphi({\tt 1})\varphi^3({\tt 1})\cdots \varphi^{2n+1}({\tt 1})\varphi^{2n}({\tt 0})\varphi^{2n-2}({\tt 0})\cdots \varphi^2({\tt 0}){\tt 0})
        \end{align*} for $n \geq 0$. 

        The Parikh vector of $v_B^{(n)}$ is the same as of the word ${\tt 000111}\mu({\tt 012}\varphi^2({\tt 012})\varphi^4({\tt 012})\dots \varphi^{2n}({\tt 012}))$.

        \item[$(\mathcal{C})$]\label{BS:Cmu} $$v_C^{(n)} = \mu({\tt 1}\varphi^2({\tt 1})\varphi^4({\tt 1})\cdots \varphi^{2n}({\tt 1})\varphi^{2n}({\tt 0})\varphi^{2n-2}({\tt 0})\cdots \varphi^2({\tt 0}){\tt 0})$$ for $n \geq 0$.
        
        The Parikh vector of $v_C^{(n)}$ is the same as of the word $\mu({\tt 01}\varphi^2({\tt 01})\varphi^4({\tt 01})\dots \varphi^{2n}({\tt 01}))$.

        \item[$(\mathcal{D})$]\label{BS:Dmu} $$v_D^{(n)} = {\tt 011001}\mu(\varphi({\tt 1})\varphi^3({\tt 1})\cdots \varphi^{2n+1}({\tt 1})\varphi^{2n+1}({\tt 0})\varphi^{2n-1}({\tt 0})\cdots \varphi({\tt 0})){\tt 01}$$ for $n \geq 0$.        

        The Parikh vector of $v_D^{(n)}$ is the same as of the word ${\tt 00001111}\mu(\varphi({\tt 01})\varphi^3({\tt 01})\dots \varphi^{2n+1}({\tt 01}))$.
    \end{itemize}
\end{corollary}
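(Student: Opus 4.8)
The plan is to derive this description exactly as in the $\nu({\bf p})$ case (\Cref{coro:BS_nu}), by feeding the structural dichotomy of \Cref{lem:muBS} into the complete list of bispecial factors of ${\bf p}$ from \Cref{coro:BS}. \Cref{lem:muBS} tells us that every bispecial factor $v$ of $\mu({\bf p})$ of length at least six decomposes as $v = d_1\,\mu(w)\,d_2$, where $d_1 \in \{\varepsilon, {\tt 011001}\}$ and $d_2 \in \{\varepsilon, {\tt 01}\}$ are fixed decorations (one combination per item) and $w$ is a factor of ${\bf p}$ whose left and right extension sets are prescribed in each item. Since each item prescribes both extension sets of $w$, and since \Cref{coro:BS} together with the extension data of \Cref{lem:ordinary} lists every nonempty bispecial factor of ${\bf p}$ together with its extensions, the prescribed extensions single out a unique family among $w_A^{(n)}, w_B^{(n)}, w_C^{(n)}, w_D^{(n)}$. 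Substituting the closed form of that family into $d_1\,\mu(w)\,d_2$ then yields the closed form of the corresponding $v_X^{(n)}$.

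I would organise the argument around the forward direction, which simultaneously verifies that the stated forms are genuinely bispecial. For a fixed family $w_X^{(n)}$ I would apply $\mu$ to each of its three bispecial extensions $a\,w_X^{(n)}\,b$ (read off from \Cref{coro:BS} and \Cref{lem:ordinary}), and then extend $\mu(w_X^{(n)})$ maximally to the left and to the right while it remains left- and right-special in $\mu({\bf p})$. By the synchronization property of \Cref{rem:synchr_point_mu} the $\mu$-preimage of any long factor is unique, so the resulting bispecial word is exactly $v_X^{(n)}$ and no factor is missed. The decorations $d_1,d_2$ then fall out of how the blocks $\mu({\tt 0}) = {\tt 011001}$, $\mu({\tt 1}) = {\tt 1001}$, $\mu({\tt 2}) = {\tt 0}$ overlap at the two ends: when the images of the preimage extension letters share a common $\mu$-prefix (resp. suffix), the branching is pushed inward, producing the literal prefix ${\tt 011001}$ or the literal suffix ${\tt 01}$.

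For the Parikh vectors I would use $\vec{\mu(u)} = M_\mu\vec u$ together with the length/Parikh identities for $w_X^{(n)}$ already recorded in \Cref{coro:BS}. The key simplification is that ${\tt 011001} = \mu({\tt 0})$, so a left decoration merges into the image as $\mu({\tt 0}\,w)$, whereas the trailing ${\tt 01}$ only contributes the Parikh vector $(1,1)$. This explains why the stated representatives carry the extra blocks ${\tt 000111}$ (three ${\tt 0}$'s and three ${\tt 1}$'s, the Parikh content of $\mu({\tt 0})$) in case $(\mathcal B)$ and ${\tt 00001111}$ (the content of $\mu({\tt 0})$ followed by ${\tt 01}$) in case $(\mathcal D)$.

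The remaining work, and the part I expect to be the genuine obstacle, is twofold. First, the decoration bookkeeping must be carried out with care: one has to track precisely how far the left and right branching propagates through $\mu$ before it is resolved by synchronization, and the four items require four slightly different computations; moreover each family must be matched to the right item by comparing the actual left/right extension sets rather than by the letter of its label, since the formula labels of \Cref{coro:BS} and the extension lists of \Cref{lem:ordinary} are not aligned in the obvious way. Second, one must treat the bispecial factors of length at least six whose preimage $w$ is empty or too short to fall under the $n$-indexed families; a finite inspection of the short factors of $\mu({\bf p})$ then produces exactly the three exceptional words ${\tt 011001}$, ${\tt 100101}$, ${\tt 01100101}$, and confirming that this finite list is complete closes the proof.
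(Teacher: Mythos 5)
Your proposal is correct and takes essentially the same route as the paper: there \Cref{coro:BS_mu} is stated with no separate proof, being (as in the analogous \Cref{coro:BS_nu}) the direct combination of \Cref{lem:muBS} with \Cref{coro:BS}, and your extension-set matching, the Parikh-vector bookkeeping via ${\tt 011001}=\mu({\tt 0})$, and the finite inspection of the empty/short-preimage cases producing ${\tt 011001}$, ${\tt 100101}$, ${\tt 01100101}$ are exactly the details that combination requires. Your caveat about matching families by their actual extension sets rather than by labels is also well taken, since the lists in \Cref{lem:ordinary} are indeed permuted relative to the families of \Cref{coro:BS} (e.g.\ $w_B^{(0)}={\tt 210}$ has left extensions $\{{\tt 0},{\tt 1}\}$ and right extensions $\{{\tt 1},{\tt 2}\}$, matching the list attributed there to $w_D^{(n)}$).
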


\subsubsection{The shortest return words to bispecial factors in $\mu(\bf p)$}

\begin{lemma}\label{lem:mu_retword}
If $w$ is a BS factor of $\bf p$, $|w|\geq 2$, and $v$ is its return word, then $\mu(v)$ is a return word to $\mu(w)$. 
\end{lemma}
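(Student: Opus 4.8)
The plan is to follow the template of Lemma~\ref{lem:nu_retword}, adapted to $\mu$; note that here the image of the return word returns to $\mu(w)$ itself, with no letter appended, in contrast to the $\nu$ case. The argument splits into two parts. For the \emph{prefix/suffix part}, observe that $\mu(v)\mu(w)=\mu(vw)$, and since $v$ is a return word to $w$ the word $vw$ contains $w$ both as a prefix and as a suffix; applying the morphism, $\mu(v)\mu(w)$ contains $\mu(w)$ as a prefix and as a suffix. So it only remains to rule out any occurrence of $\mu(w)$ strictly inside $\mu(v)\mu(w)$.

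For the \emph{no-middle-occurrence part}, I would show that $\mu(w)$ admits the synchronization points $\bullet\mu(w)\bullet$, i.e. that every occurrence of $\mu(w)$ in $\mu({\bf p})$ is aligned with $\mu$-block boundaries at both ends. Granting this, a hypothetical occurrence of $\mu(w)$ in the interior of $\mu(vw)$ would be block-aligned, hence, by injectivity of $\mu$, would be the image $\mu(w')$ of a factor $w'$ of $vw$ with $w'=w$; this $w'$ would be a third occurrence of $w$ strictly between the prefix and suffix occurrences, contradicting that $v$ is a return word. To locate the boundary patterns, I would read off the first and last letters of $w$ from Corollary~\ref{coro:BS}: since $|w|\ge 2$ and ${\bf p}$ avoids $\texttt{00},\texttt{11},\texttt{22},\texttt{20}$ (Theorem~\ref{lemmap}), a BS factor beginning with $\texttt{1}$ in fact begins with $\texttt{10}$ and one beginning with $\texttt{2}$ begins with $\texttt{210}$, while it ends with $\texttt{10}$ or $\texttt{01}$. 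Thus $\mu(w)$ begins with $\mu(\texttt{10})=\texttt{1001011001}$, possibly preceded by the single letter $\mu(\texttt{2})=\texttt{0}$, and ends with $\mu(\texttt{0})=\texttt{011001}$ or with $\mu(\texttt{01})=\texttt{0110011001}$.

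The main obstacle, and the genuinely new point compared with Lemmata~\ref{lem:images_retwords} and~\ref{lem:nu_retword}, is establishing these boundary synchronization points: for $\mu$ there is no single letter that always marks a block boundary, because $\mu(\texttt{2})=\texttt{0}$ is a prefix of $\mu(\texttt{0})$ and $\mu(\texttt{1})=\texttt{1001}$ is a suffix of $\mu(\texttt{0})$, so a short pattern such as $\texttt{1001}$ occurs both block-aligned and inside $\mu(\texttt{0})$. I would resolve the left boundary as follows: the prefix $\texttt{1001011001}$ cannot begin in the interior of any block (a short finite check against $\mu(\texttt{0}),\mu(\texttt{1}),\mu(\texttt{2})$ and the admissible block successions coming from the avoided factors of ${\bf p}$), which forces a block boundary before it; in the $\texttt{210}$ case the extra leading $\texttt{0}$ is then forced to be a $\mu(\texttt{2})$-block, since it is immediately followed by a block boundary and $\mu(\texttt{2})=\texttt{0}$ is the only block ending in $\texttt{0}$. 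The right boundary is handled by the analogous check on the suffix $\texttt{011001}$ (respectively $\texttt{0110011001}$), or equivalently by noting that once the left boundary is fixed the parse propagates rigidly through $\mu(w)$ since each block in these patterns is determined by its first letter. Once $\bullet\mu(w)\bullet$ is secured, the injectivity argument above closes the proof; the hypothesis $|w|\ge 2$ is exactly what guarantees $\mu(w)$ is long enough (length at least $10$, consistent with Remark~\ref{rem:synchr_point_mu}) for these boundary patterns to appear and be unambiguous.
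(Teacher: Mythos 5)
Your proposal follows the paper's proof essentially step for step: the prefix/suffix containment of $\mu(w)$ in $\mu(v)\mu(w)=\mu(vw)$, the synchronization points $\bullet\mu(w)\bullet$ forced by the fact that BS factors of length at least two begin with $\texttt{10}$ or $\texttt{21}$ and end with $\texttt{0}$ or $\texttt{1}$, and injectivity of $\mu$ to pull a hypothetical middle occurrence of $\mu(w)$ back to a middle occurrence of $w$ in $vw$. The only difference is one of detail: the paper asserts the synchronization points in a single line, while you carry out the boundary-pattern verification explicitly, and your verification is sound apart from the inessential slip that $\mu(\texttt{0})$ and $\mu(\texttt{2})$ share the first letter $\texttt{0}$, so blocks are not literally ``determined by their first letter'' (your primary route, the finite check on the prefix and suffix patterns, is the correct one).
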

\begin{proof}
    On one hand, since $vw$ contains $w$ as a prefix and as a suffix, then $\mu(v)\mu(w)$ contains $\mu(w)$ as a prefix and as a suffix, too. 
On the other hand, $w$ starts in ${\tt 10}$ or ${\tt 21}$ and ends in ${\tt 0}$ or ${\tt 1}$, therefore $\mu(w)$ has the following synchronization points $\bullet \mu(w)\bullet $. Consequently,  $\mu(v)\mu(w)$ cannot contain $\mu(w)$ somewhere in the middle because in such a case, by injectivity of $\mu$, $vw$ would contain $w$ also somewhere in the middle. 

\end{proof}
Applying Lemma~\ref{lem:mu_retword} and Observation~\ref{obs:retwordsLSandRS}, we have the following description of the shortest return words to BS factors.

\begin{corollary}\label{coro:retwords_mu}
The shortest return words to BS factors of length greater than eight in $\mu(\bf p)$ have the following properties.

\begin{itemize}
        \item[$(\mathcal{A})$]\label{RW:Amu} 
        The shortest return word $\hat{r}_A^{(n)}$ to $v_A^{(n)}$ has the same Parikh vector as 
        $\mu(\varphi^{2n-1}({\tt 012}))$ for $n \geq 1$.
        
        \item[$(\mathcal{B})$]\label{RW:Bmu}         
        The shortest return word $\hat{r}_B^{(n)}$ to $v_B^{(n)}$ has the same Parikh vector as $\mu(\varphi^{2n}({\tt 012}))$.

        \item[$(\mathcal{C})$]\label{RW:Cmu}
        The shortest return word $\hat{r}_C^{(n)}$ to $v_C^{(n)}$ has the same Parikh vector as $\mu(\varphi^{2n}({\tt 01}))$.

        \item[$(\mathcal{D})$]\label{RW:Dmu}
       The shortest return word $\hat{r}_D^{(n)}$ to $v_D^{(n)}$ has the same Parikh vector as $\mu(\varphi^{2n+1}({\tt 01}))$.
    \end{itemize}
\end{corollary}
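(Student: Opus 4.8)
The plan is to follow the proof of Corollary~\ref{coro:retwords_nu} closely, with $\mu$ in place of $\nu$: the two tools are Lemma~\ref{lem:mu_retword}, which carries return words through $\mu$, and Observation~\ref{obs:retwordsLSandRS}, which adds a single one-sided extension without changing the Parikh vector of the shortest return word. The first step is to replace $w_X^{(n)}$ by $\mu(w_X^{(n)})$ for $X\in\{\mathcal A,\mathcal B,\mathcal C,\mathcal D\}$. Each $w_X^{(n)}$ begins with $\texttt{10}$ or $\varphi(\texttt1)=\texttt{21}$ and ends with $\texttt0$ or $\texttt1$, so, exactly as in the proof of Lemma~\ref{lem:mu_retword}, $\mu(w_X^{(n)})$ has the synchronization points $\bullet\,\mu(w_X^{(n)})\,\bullet$. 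Hence every occurrence of $\mu(w_X^{(n)})$ in $\mu({\bf p})$ comes from an occurrence of $w_X^{(n)}$, the return words to $\mu(w_X^{(n)})$ are exactly the $\mu$-images of those to $w_X^{(n)}$, and, because the shortest return word to $w_X^{(n)}$ is a prefix of the others by Corollary~\ref{coro:retwordsp}, the shortest return word to $\mu(w_X^{(n)})$ is $\mu(r_X^{(n)})$. Since the Parikh vector of a $\mu$-image depends only on the Parikh vector of its argument, this already yields the claimed vectors $\mu(\varphi^{2n-1}(\texttt{012}))$, $\mu(\varphi^{2n}(\texttt{012}))$, $\mu(\varphi^{2n}(\texttt{01}))$, $\mu(\varphi^{2n+1}(\texttt{01}))$.

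Next I would move from $\mu(w_X^{(n)})$ to the actual bispecial factor $v_X^{(n)}$ of Corollary~\ref{coro:BS_mu}. Case $\mathcal C$ needs nothing, since $v_C^{(n)}=\mu(w_C^{(n)})$. In the remaining cases $v_X^{(n)}$ is obtained from $\mu(w_X^{(n)})$ by appending $\texttt{01}$ (cases $\mathcal A,\mathcal D$) and/or prepending $\texttt{011001}$ (cases $\mathcal B,\mathcal D$), and I would add these affixes one letter at a time, invoking Observation~\ref{obs:retwordsLSandRS} at each step. Appending $\texttt{01}$ is immediate: $w_A^{(n)}$ and $w_D^{(n)}$ have right extensions $\texttt0,\texttt2$, and both $\mu(\texttt0)$ and $\mu(\texttt2)$ start with $\texttt0$, so $\mu(w_X^{(n)})$ is followed only by $\texttt0$, and then only by $\texttt1$; two applications finish these cases.

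The delicate part, and the only place where the argument genuinely differs from the $\nu$ case, is the prepended prefix $\texttt{011001}$ in cases $\mathcal B,\mathcal D$ (write $w^{(n)}$ for $w_B^{(n)}$ or $w_D^{(n)}$, both of which start with $\varphi(\texttt1)=\texttt{21}$). Here I must verify that $\mu(w^{(n)})$ and each of $\texttt1\mu(w^{(n)})$, $\texttt{01}\mu(w^{(n)})$, $\texttt{001}\mu(w^{(n)})$, $\texttt{1001}\mu(w^{(n)})$, $\texttt{11001}\mu(w^{(n)})$ has a unique left extension, so that all of them, and finally $\texttt{011001}\mu(w^{(n)})=v^{(n)}$, occur exactly where $\mu(w^{(n)})$ does; then their return words coincide and the Parikh vector is unchanged. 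Since $w^{(n)}$ is preceded only by $\texttt0$ or $\texttt1$ and both $\mu(\texttt0)$ and $\mu(\texttt1)$ end in $\texttt1$, the first left extension is forced to be $\texttt1$, and the following ones are forced by the endings of $\mu(\texttt0)$ and $\mu(\texttt1)$. The one step that is not purely formal is $\texttt{1001}\mu(w^{(n)})$: when the block in front of $\mu(w^{(n)})$ is $\mu(\texttt1)=\texttt{1001}$, its left extension a priori depends on whether the $\texttt1$ preceding $w^{(n)}$ is itself preceded by $\texttt0$ or $\texttt2$.

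This is exactly where I expect the main (and only real) obstacle to lie, and I would resolve it by a forbidden-factor argument: $w^{(n)}$ starts with $\varphi(\texttt1)=\texttt{21}$, so a preceding $\texttt2$ would create the factor $\texttt{212}$, which Theorem~\ref{lemmap} lists among the factors avoided by ${\bf p}$. Hence $\texttt1w^{(n)}$ is always preceded by $\texttt0$, the left extension at that step is unique, and the whole prefix $\texttt{011001}$ can be prepended. Combining the three steps, the shortest return word to $v_X^{(n)}$ has the same Parikh vector as $\mu(r_X^{(n)})$, as claimed. As in Corollary~\ref{coro:retwords_nu} I would spell out a single case in full, namely $\mathcal A$, which uses only the appended $\texttt{01}$, and remark that the others are analogous, the new ingredient being the $\texttt{212}$ argument that secures the left extensions while prepending $\texttt{011001}$.
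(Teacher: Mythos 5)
Your proof is correct and takes essentially the same route as the paper's: Corollary~\ref{coro:retwordsp} for the return words in ${\bf p}$, Lemma~\ref{lem:mu_retword} to transport them through $\mu$, and Observation~\ref{obs:retwordsLSandRS} to attach the affixes. The paper writes out only case $(\mathcal{A})$ and declares the others similar, so your explicit handling of the prepended prefix \texttt{011001} in cases $(\mathcal{B})$ and $(\mathcal{D})$ --- in particular ruling out a preceding \texttt{2} via the forbidden factor \texttt{212} --- merely fills in detail that the paper omits.
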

\begin{proof}
We will prove case $(\mathcal{A})$. The other cases are similar. 
Let us consider $n \geq 1$ and the bispecial factor 
$$v_A^{(n)} = \mu( {\tt 1}\varphi^2({\tt 1})\varphi^4({\tt 1})\cdots \varphi^{2n}({\tt 1})\varphi^{2n-1}({\tt 0})\varphi^{2n-3}({\tt 0})\cdots \varphi({\tt 0})){\tt 01} = \mu(w_A^{(n)}) {\tt 01 }.$$
Using Corollary~\ref{coro:retwordsp}, we know that the shortest return word to $w_A^{(n)}$ has the same Parikh vector as $\varphi^{2n-1}({\tt 012})$, moreover the shortest return word is a prefix of all of the return words.

Using Lemma~\ref{lem:mu_retword}, and the fact that $\mu$ is non-erasing, we obtain that the shortest return word to $\mu(w_A^{(n)})$ has the same Parikh vector as $\mu(\varphi^{2n-1}({\tt 012}))$. 
Using Observation~\ref{obs:retwordsLSandRS} twice, we obtain that the shortest return word to  $\mu(w_A^{(n)}){\tt 01}$ has the same Parikh vector as $\mu(\varphi^{2n-1}({\tt 012}))$. 
 \end{proof}

\subsubsection{The critical exponent of $\mu(\bf p)$}
Using Theorem~\ref{thm:formulaE} and the description of BS factors from Corollary~\ref{coro:BS_mu} and of their shortest return words from Corollary~\ref{coro:retwords_mu}, we obtain the following formula for the critical exponent of $\mu(\bf p)$.
$$
E(\mu({\bf p})) = 1 + \max\left\{A,B,C,D,F\right\}\,,
$$
where
\begin{align*}
A &= \sup\left\{\frac{|v_A^{(n)}|}{|\hat{r}_A^{(n)}|} : n\geq 1 \right\} 
= \sup\left\{\frac{|{\tt 01}\mu({\tt 1}\varphi({\tt 012})\varphi^3({\tt 012})\dots \varphi^{2n-1}({\tt 012}))|}{|\mu(\varphi^{2n-1}({\tt 012}))|} : n\geq 1 \right\}\,; \\
B &= \sup\left\{\frac{|v_B^{(n)}|}{|\hat{r}_B^{(n)}|} : n\geq 0 \right\} 
= \sup\left\{\frac{|{\tt 000111}\mu({\tt 012}\varphi^2({\tt 012})\varphi^4({\tt 012})\dots \varphi^{2n}({\tt 012}))|}{|\mu(\varphi^{2n}({\tt 012}))|} : n\geq 0 \right\}\,; \\
C &= \sup\left\{\frac{|v_C^{(n)}|}{|\hat{r}_C^{(n)}|} : n\geq 0 \right\} 
= \sup\left\{\frac{|\mu({\tt 01}\varphi^2({\tt 01})\varphi^4({\tt 01})\dots \varphi^{2n}({\tt 01}))|}{|\mu(\varphi^{2n}({\tt 01}))|} : n\geq 1 \right\}\,; \\
D &= \sup\left\{\frac{|v_D^{(n)}|}{|\hat{r}_D^{(n)}|} : n\geq 0 \right\}
= \sup\left\{\frac{|{\tt 00001111}\mu(\varphi({\tt 01})\varphi^3({\tt 01})\dots \varphi^{2n+1}({\tt 01}))|}{|\mu(\varphi^{2n+1}({\tt 01}))|} : n\geq 0 \right\}\,;\\
F&=\max\left\{\frac{|w|}{|r|} : w \text{\ BS in $\mu(\bf p)$ of length at most 8 and $r$ its shortest return word} \right\}\,.
\end{align*}

\begin{theorem}
    The critical exponent of $\mu({\bf p})$ equals 
    \begin{equation*}
        E(\mu({\bf p})) = \frac{28}{11}.        
    \end{equation*}
\end{theorem}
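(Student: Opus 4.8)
The plan is to evaluate the quantity $\max\{A,B,C,D,F\}$ in the formula $E(\mu({\bf p}))=1+\max\{A,B,C,D,F\}$, which rests on \Cref{thm:formulaE} together with the description of the bispecial factors of $\mu({\bf p})$ in \Cref{coro:BS_mu} and of their shortest return words in \Cref{coro:retwords_mu}. I would show that this maximum equals $\tfrac{17}{11}$, so that $E(\mu({\bf p}))=1+\tfrac{17}{11}=\tfrac{28}{11}$.

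First I would dispose of $F$. The bispecial factors of $\mu({\bf p})$ of length at most eight form a finite, explicitly listable set (including $\texttt{0}$, $\texttt{1}$, $\texttt{011001}$, $\texttt{100101}$, $\texttt{01100101}$); for each one I would locate its shortest return word and compute the ratio $|w|/|r|$. For instance $\texttt{011001}$ has shortest return word $\texttt{0110}$, giving $\tfrac{6}{4}=\tfrac32$, and a direct inspection of the remaining short factors shows that this finite check yields $F\le\tfrac{17}{11}$. Next I would set up the length sequences governing the four infinite families: writing $c_n:=|\mu(\varphi^n(\texttt{012}))|$ and $d_n:=|\mu(\varphi^n(\texttt{01}))|$, both satisfy the recurrence $x_{n+1}=2x_n-x_{n-1}+x_{n-2}$ induced by the characteristic polynomial $t^3-2t^2+t-1$ of $M_\varphi$, with $c_0,c_1,c_2=11,21,36$ and $d_0,d_1,d_2=10,15,26$. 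Using the closed forms $c_n=A_1\beta^n+B_1\lambda_1^n+C_1\lambda_2^n$ and $d_n=A_2\beta^n+B_2\lambda_1^n+C_2\lambda_2^n$, with $\beta\doteq1.75488$ the dominant root and $\lambda_1,\lambda_2=\overline{\lambda_1}$ the conjugate complex roots, each of the four ratios becomes a sum of geometric-type terms over its denominator, exactly as in the treatment of $\nu({\bf p})$.

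The decisive observation is that $B$ already attains $\tfrac{17}{11}$ at $n=0$: the factor $v_B^{(0)}=\mu(\texttt{0210})$ has length $17$ and shortest return word of length $c_0=11$, so $\tfrac{|v_B^{(0)}|}{|\hat r_B^{(0)}|}=\tfrac{17}{11}$, corresponding to the length-$28$, period-$11$ repetition in $\mu({\bf p})$. It then remains to bound every family by $\tfrac{17}{11}$, with equality attained only by $B$ at $n=0$; for orientation, $A$ and $C$ increase toward their limit from below, while $D$ peaks at $\tfrac{23}{15}$ for $n=0$.

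The crux, and the main obstacle, is that $\tfrac{17}{11}\doteq1.545$ is strictly larger than the asymptotic value $\tfrac{\beta^2}{\beta^2-1}\doteq1.481=E^*({\bf p})-1$ that each of $A,B,C,D$ approaches as $n\to\infty$. Consequently the supremum is attained at a small index rather than in the limit, and the sequences are not monotone, so the bound cannot be read off from the limit alone. I would therefore bound each family in two steps: verify the finitely many small indices (e.g. $n=0,1$ for $B$ and $D$, and $n=1,2$ for $A$ and $C$) by direct arithmetic, and dominate the tail by replacing the complex terms $\lambda_1^{k}$ with their moduli and summing the resulting geometric series, thereby reducing each inequality $\tfrac{|v_\bullet^{(n)}|}{|\hat r_\bullet^{(n)}|}\le\tfrac{17}{11}$ to a fixed numerical inequality in $\beta,|\lambda_1|,A_1,|B_1|$ (resp.\ $A_2,|B_2|$), precisely in the style of the bounds $A\le\tfrac32$, $B\le\tfrac32$, $C=\tfrac32$, $D\le\tfrac32$ established for $\nu({\bf p})$. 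Combining $F\le\tfrac{17}{11}$ with $A,B,C,D\le\tfrac{17}{11}$ and the equality $B=\tfrac{17}{11}$ at $n=0$ gives $\max\{A,B,C,D,F\}=\tfrac{17}{11}$, and hence $E(\mu({\bf p}))=\tfrac{28}{11}$.
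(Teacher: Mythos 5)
Your proposal is correct and follows essentially the same route as the paper: the identity $E(\mu(\mathbf{p}))=1+\max\{A,B,C,D,F\}$ from Theorem~\ref{thm:formulaE} with Corollaries~\ref{coro:BS_mu} and~\ref{coro:retwords_mu}, the finite check giving $F<\tfrac{17}{11}$, the same length sequences (your $c_n,d_n$ are the paper's $e_n,f_n$, with matching initial values $11,21,36$ and $10,15,26$), the key observation that the maximum $\tfrac{17}{11}$ is attained by the $B$-family at $n=0$ (the factor $\mu(\texttt{0210})$ of length $17$ with return word of length $11$), and tail bounds obtained by replacing the oscillating terms $\lambda_1^k$ by their moduli. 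The paper carries out exactly this plan, so no further comparison is needed.
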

\begin{proof}
To evaluate the critical exponent of $\mu(\bf p)$ using the above formula, we have to do several steps. 
\begin{enumerate}
\item Determining the shortest return words of BS factors of length at most 8 in $\mu(\bf p)$:
\begin{itemize}
  \item {\tt 0} is a BS factor with the shortest return word {\tt 0}. 
  \item {\tt 1} is a BS factor with the shortest return word {\tt 1}.
  \item {\tt 01} is a BS factor with the shortest return word {\tt 01}.
  \item {\tt 10} is a BS factor with the shortest return word {\tt 10}.
  \item {\tt 010} is a BS factor with the shortest return word {\tt 01}.
  \item {\tt 1001} is a BS factor with the shortest return word {\tt 1001}.
  \item {\tt 011001} is a BS factor with the shortest return word {\tt 0110}.
  \item {\tt 100101} is a BS factor with the shortest return word {\tt 10010}.
  \item {\tt 01100101} is a BS factor with the shortest return word {\tt 011001}.
\end{itemize}
Therefore, $F = \max\left\{1, \frac{3}{2}, \frac{6}{5}, \frac{8}{6} \right\} < \frac{17}{11}$.
    
\item Computation of $A$ and $B$.

Denote $e_n := |\mu(\varphi^n(012))|$, then it satisfies the recurrence relation $e_{n+1} = 2e_n - e_{n-1} + e_{n-2}$
with initial conditions \\
$e_0 = 11, e_1 = 21$ and $e_2 = 36$.\\

The explicit solution reads
$$e_n = A_3\beta^n + B_3 \lambda_1^n + C_3 \lambda_2^n,$$
where
$$\beta \doteq 1.75488,\quad \lambda_1 \doteq 0.12256 + 0.74486 i, \quad \lambda_2 = \overline{\lambda_1}$$
are the roots of the polynomial $t^3 - 2t^2 + t - 1$, and
\begin{align*}
    A_3 &= \frac{11|\lambda_1|^2 - 42\operatorname{Re}(\lambda_1) + 36}{|\beta-\lambda_1|^2} \doteq 11.530751580 \,;
\\
    B_3 &= \frac{11\beta\lambda_2 - 21 (\beta+\lambda_2)+36}{(\beta - \lambda_1)(\lambda_2-\lambda_1)} \doteq 	-0.265375790 - 0.557144391 i \,;\\
    C_3 &= \overline{B_3}\,.
\end{align*}

Let us show that $A \leq \frac{17}{11}$. We have to show for all $n\geq 1$ that

\begin{align*}
    \frac{6 + A_3\sum_{k=1}^n\beta^{2k-1} + B_3 \sum_{k=1}^n \lambda_1^{2k-1} + C_3 \sum_{k=1}^n\lambda_2^{2k-1} }{A_3\beta^{2n-1} + B_3 \lambda_1^{2n-1} + C_3 \lambda_2^{2n-1}} 
    \quad &\leq^? \quad  \frac{17}{11},\\
    66 + 11A_3\sum_{k=1}^{n} \beta^{2k-1} + 22\operatorname{Re}\left(B_3 \sum_{k=1}^{n}  \lambda_1^{2k-1} \right) 
    \quad &\leq^? \quad  17A_3\beta^{2n-1} + 34\operatorname{Re}(B_2\lambda_1^{2n-1}),\\
66 + 11A_3\sum_{k=1}^{n-1} \beta^{2k-1} + 22\operatorname{Re}\left(B_3 \sum_{k=1}^{n-1}  \lambda_1^{2k-1} \right) 
    \quad &\leq^? \quad  6A_3\beta^{2n-1} + 12\operatorname{Re}(B_3\lambda_1^{2n-1}),\\
    66 + 11A_3\left( \frac{\beta^{2n-1}}{\beta^2-1} - \frac{\beta}{\beta^2 - 1}\right) + 22\operatorname{Re}\left(B_3\lambda_1\frac{1-\lambda_1^{2n-2}}{1-\lambda_1^2} \right) 
    \quad &\leq^? \quad  6A_3\beta^{2n-1} + 12\operatorname{Re}(B_3\lambda_1^{2n-1}).
\end{align*}

Since
\begin{align*}
\frac{11}{\beta^2-1} &\leq 6,
\end{align*}
we need to prove the inequality in the form
\begin{align*}
 66 + 22\operatorname{Re}\left(B_3\lambda_1\frac{1-\lambda_1^{2n-2}}{1-\lambda_1^2} \right) 
    \quad &\leq^? \quad  11A_3\frac{\beta}{\beta^2 - 1} + 12\operatorname{Re}(B_3\lambda_1^{2n-1}).
\end{align*}

For the left side, we can write for $n \geq 1$
\begin{align*}
      66 + 22\operatorname{Re}\left(B_3\lambda_1\frac{1-\lambda_1^{2n-2}}{1-\lambda_1^2} \right) 
     \quad &\leq \quad 
     66 + 22 |B_3||\lambda_1|\frac{|\lambda_1|^{2n-2} + 1}{|\lambda_1^2-1|}\\
     \quad &\leq \quad 
     66 + 22 |B_3||\lambda_1|\frac{2}{|\lambda_1^2-1|}.
\end{align*}
For the right side, we can write for $n \geq 1$
\begin{align*}
    \quad  11A_3\frac{\beta}{\beta^2 - 1}+ 12\operatorname{Re}(B_3\lambda_1^{2n-1})
    \quad &\geq \quad
    11A_3\frac{\beta}{\beta^2 - 1} - 12 |B_3||\lambda_1|^{2n-1} \\
    &\geq \quad 11A_3\frac{\beta}{\beta^2 - 1} - 12 |B_3||\lambda_1|.
\end{align*}

Since the inequality 
\begin{equation*}
    66 + 22 |B_3||\lambda_1|\frac{2}{|\lambda_1^2-1|} \quad \leq\quad  11A_3\frac{\beta}{\beta^2 - 1} - 12 |B_1||\lambda_1|
\end{equation*}
holds true for the given values, we obtain $A\leq \frac{17}{11}$.

Next, we will show that $B \leq \frac{17}{11}$. 

Since for $n = 0$ we have $\frac{|v_B^{(0)}|}{|\hat{r}_B^{(0)}|}=\frac{6+11}{11} = \frac{17}{11}$, it remains to show that 
for all $n \geq 1$ 
\begin{align*}
    \frac{6+ A_3\sum_{k=0}^n\beta^{2k} + B_3 \sum_{k=0}^n \lambda_1^{2k} + C_3 \sum_{k=0}^n\lambda_2^{2k} }{A_3\beta^{2n} + B_3 \lambda_1^{2n} + C_3 \lambda_2^{2n}} 
    \quad &\leq^? \quad  \frac{17}{11},\\
    66+11A_3\sum_{k=0}^{n-1}\beta^{2k} + 22\operatorname{Re}\left(B_3\sum_{k=0}^{n-1} \lambda_1^{2k}\right) 
    \quad &\leq^? \quad 6A_3\beta^{2n} + 12 \operatorname{Re}\left(B_3 \lambda_1^{2n}\right),\\
    66+11A_3\frac{\beta^{2n}-1}{\beta^2-1} + 22\operatorname{Re}\left(B_3\frac{\lambda_1^{2n}-1}{\lambda_1^2-1}\right) 
    \quad &\leq^? \quad 6A_3\beta^{2n} + 12 \operatorname{Re}\left(B_3 \lambda_1^{2n}\right).
    \end{align*}

Now, we need to be more careful with the approximations, we will therefore prove the inequality in the form
\begin{align*}
    66+ 22\operatorname{Re}\left(B_3\frac{\lambda_1^{2n}-1}{\lambda_1^2-1}\right) 
    \quad &\leq^? \quad 11A_3\frac{1}{\beta^2-1} + A_3\beta^{2n} \left(6-\frac{11}{\beta^2-1}\right) + 12 \operatorname{Re}\left(B_3 \lambda_1^{2n}\right).
\end{align*}

For the left side, we can write for $n \geq 1$
\begin{align*}
     66+ 22\operatorname{Re}\left(B_3\frac{\lambda_1^{2n}-1}{\lambda_1^2-1}\right) 
    \quad &\leq \quad
     66 + 22|B_3| \frac{|\lambda_1|^{2n} +1}{|\lambda_1^2-1|}\\
    \quad &\leq \quad
    66 + 22|B_3| \frac{1+|\lambda_1|^2}{|\lambda_1^2-1|}.    
\end{align*}

For the right side, we can write for $n \geq 1$
\begin{align*}
    A_3\frac{11}{\beta^2-1} + A_3\beta^{2n} \left(6-\frac{11}{\beta^2-1}\right) + 12 \operatorname{Re}\left(B_3 \lambda_1^{2n}\right)
    \quad &\geq \quad
    A_3\frac{11}{\beta^2 - 1} + A_3\beta^{2} \left(6-\frac{11}{\beta^2-1}\right)- 12 |B_3||\lambda_1|^{2n} \\
    &\geq \quad A_3\frac{11}{\beta^2 - 1}+ A_3\beta^{2} \left(6-\frac{11}{\beta^2-1}\right) - 12 |B_3||\lambda_1|^{2}.
\end{align*}

Since the inequality 
\begin{equation*}
   66 + 22|B_3| \frac{1+|\lambda_1|^2}{|\lambda_1^2-1|} 
    \quad \leq\quad  
A_3\frac{11}{\beta^2 - 1}+ A_3\beta^{2} \left(6-\frac{11}{\beta^2-1}\right) - 12 |B_3||\lambda_1|^{2}
\end{equation*}
holds true for the given values, we conclude $B = \frac{17}{11}$.

\item Computation of $C$ and $D$.
Denote $f_n := |\mu(\varphi^n(01))|$, then it satisfies the recurrence relation $f_{n+1} = 2f_n - f_{n-1} + f_{n-2}$
with initial conditions \\
$f_0 = 10, f_1 = 15$ and $f_2 = 26$.\\

The explicit solution reads
$$f_n = A_4\beta^n + B_4 \lambda_1^n + C_4 \lambda_2^n,$$
where
$$\beta \doteq 1.75488,\quad \lambda_1 \doteq 0.12256 + 0.74486 i, \quad \lambda_2 = \overline{\lambda_1}$$
are the roots of the polynomial $t^3 - 2t^2 + t - 1$, and
\begin{align*}
    A_4 &= \frac{10|\lambda_1|^2 - 30\operatorname{Re}(\lambda_1) + 26}{|\beta-\lambda_1|^2} \doteq 8.704306843 \,;
\\
    B_4 &= \frac{10\beta\lambda_2 - 15 (\beta+\lambda_2) + 26}{(\beta - \lambda_1)(\lambda_2-\lambda_1)} \doteq 	0.647846579 + 0.291191845 i \,;\\
    C_4 &= \overline{B_4}\,.
\end{align*}

The computation for $C \leq \frac{17}{11}$ is the same as for $B$.
Let us show that $D \leq \frac{17}{11}$. We have to show for $n\geq 1$ that

\begin{align*}
    \frac{8 + A_4\sum_{k=1}^n\beta^{2k-1} + B_4 \sum_{k=1}^n \lambda_1^{2k-1} + C_4 \sum_{k=1}^n\lambda_2^{2k-1} }{A_4\beta^{2n-1} + B_4 \lambda_1^{2n-1} + C_4 \lambda_2^{2n-1}} 
    \quad &\leq^? \quad  \frac{17}{11},\\
    88 + 11A_4\left( \frac{\beta^{2n-1}}{\beta^2-1} - \frac{\beta}{\beta^2 - 1}\right) + 22\operatorname{Re}\left(B_4\lambda_1\frac{1-\lambda_1^{2n-2}}{1-\lambda_1^2} \right) 
    \quad &\leq^? \quad  6A_4\beta^{2n-1} + 12\operatorname{Re}(B_4\lambda_1^{2n-1}).
\end{align*}

Since
\begin{align*}
\frac{11}{\beta^2-1} &\leq 6,
\end{align*}
we need to prove the inequality in the form
\begin{align*}
 88 + 22\operatorname{Re}\left(B_4\lambda_1\frac{1-\lambda_1^{2n-2}}{1-\lambda_1^2} \right) 
    \quad &\leq^? \quad  11A_4\frac{\beta}{\beta^2 - 1} + 12\operatorname{Re}(B_4\lambda_1^{2n-1}).
\end{align*}

For the left side, we can write for $n \geq 1$
\begin{align*}
      88 + 22\operatorname{Re}\left(B_4\lambda_1\frac{1-\lambda_1^{2n-2}}{1-\lambda_1^2} \right) 
     \quad &\leq \quad 
     88 + 22\operatorname{Re}\left(\frac{B_4\lambda_1}{1-\lambda_1^2} \right) + 22|B_4|\frac{|\lambda_1|^{2n-1}}{|1-\lambda_1^2|}\\
     \quad &\leq \quad 
     88 + 22\operatorname{Re}\left(\frac{B_4\lambda_1}{1-\lambda_1^2} \right) + 22|B_4|\frac{|\lambda_1|}{|1-\lambda_1^2|}.
\end{align*}
For the right side, we can write for $n \geq 1$
\begin{align*}
    \quad  11A_4\frac{\beta}{\beta^2 - 1}+ 12\operatorname{Re}(B_4\lambda_1^{2n-1})
    \quad &\geq \quad
    11A_4\frac{\beta}{\beta^2 - 1} - 12 |B_4||\lambda_1|^{2n-1} \\
    &\geq \quad 11A_4\frac{\beta}{\beta^2 - 1} - 12 |B_4||\lambda_1|.
\end{align*}

Since the inequality 
\begin{equation*}
   88 + 22\operatorname{Re}\left(\frac{B_4\lambda_1}{1-\lambda_1^2} \right) + 22|B_4|\frac{|\lambda_1|}{|1-\lambda_1^2|}
   \quad \leq\quad  
   11A_4\frac{\beta}{\beta^2 - 1} - 12 |B_4||\lambda_1|
\end{equation*}
holds true for the given values, we obtain $D\leq \frac{17}{11}$.

We have shown that $\max\{A, B, C, D\}=B=\frac{17}{11}$, and $F <\frac{17}{11} $.
Consequently, $E(\mu({\bf p}))=1+\max\{A,B,C,D,F\}=\frac{28}{11}$.
\end{enumerate}
\end{proof}


\end{document}